\newcommand{\mathsym}[1]{{}}
\newcommand{\unicode}[1]{{}}
\theoremstyle{plain}
\newtheorem{theorem}{Theorem}
\newtheorem{lemma}[theorem]{Lemma}
\newtheorem{corollary}[theorem]{Corollary}
\newtheorem{proposition}[theorem]{Proposition}
\theoremstyle{definition}
\theoremstyle{remark}
\newtheorem{remark}[theorem]{Remark}
\newcommand{\Z}{\mathbb Z}
\newcommand{\R}{\mathbb R}
\newcommand{\C}{\mathbb C}
\newcommand{\+}{\!+\!}
\newcommand{\half}{\tfrac{1}{2}}
\renewcommand{\leq}{\leqslant}
\renewcommand{\geq}{\geqslant}
\numberwithin{equation}{section}
\begin{document}


\title{Volumes and distributions for random unimodular complex and quaternion lattices}
\author{Peter J. Forrester}
\address{Department of Mathematics and Statistics, 
ARC Centre of Excellence for Mathematical \& Statistical Frontiers,
University of Melbourne, Victoria 3010, Australia}
\email{pjforr@unimelb.edu.au}
\author{Jiyuan Zhang}
\address{Department of Mathematics and Statistics, 
University of Melbourne, Victoria 3010, Australia}
\email{jiyuanz@student.unimelb.edu.au}
\date{\today}


\begin{abstract}
Two themes associated with invariant measures on the matrix groups ${\rm SL}_N(\mathbb F)$, with $\mathbb F = \mathbb R, \mathbb C$ or $\mathbb H$, and their corresponding lattices parametrised by ${\rm SL}_N(\mathbb F)/{\rm SL}_N(\mathbb O)$, $\mathbb O$ being an appropriate Euclidean ring of integers, are considered. 
The first is the computation of the volume of the subset of ${\rm SL}_N(\mathbb F)$ with bounded 2-norm or Frobenius norm. Key here is the decomposition of measure in terms of the singular values. The form of the volume, for large values of the bound, is relevant 
to asymptotic counting problems in ${\rm SL}_N(\mathcal O)$. The second is the problem of lattice reduction in the case $N=2$. A unified proof of 
the validity of the appropriate analogue of the Lagrange--Gauss algorithm for computing the shortest basis is given. A decomposition of measure corresponding
to the QR decomposition is used to specify the invariant measure in the coordinates of the shortest basis vectors. With $\mathbb F = \mathbb C$ this
allows for the exact computation of the PDF of  the first minimum (for $\mathcal O = \mathbb Z[i]$ and $\mathbb Z[(1+\sqrt{-3})/2]$), and the PDF of the
second minimum and that of the angle between the minimal basis vectors (for $\mathcal O = \mathbb Z[i]$).
It also encodes the specification of fundamental domains of the corresponding quotient spaces. Integration over the latter gives rise to certain number theoretic constants, which are also present in the  asymptotic forms of the PDFs of the lengths of the shortest basis vectors.
Siegel's mean value 
gives an alternative method to compute the arithmetic constants, allowing in particular the computation of the leading form
of the PDF of the first minimum for $\mathbb F = \mathbb H$ and $\mathcal O$ the Hurwitz integers, for which direct integration was not possible.
\end{abstract}


\maketitle


\section{Introduction}\label{s1}
Let $\mathcal{B}=\{\mathbf{b}_0, \mathbf{b}_1,\ldots, \mathbf{b}_{d-1}\}$ be a basis of $\R^d$, and require that the corresponding parallelotope have unit volume. Let
\begin{equation}\label{1}
\mathcal{L}=\{m_0\mathbf{b}_0+\cdots+m_{d-1}\mathbf{b}_{d-1}\,|\,m_0,\ldots,m_{d-1}\in\Z\}
\end{equation}
denote the corresponding lattice. The Minkowski-Hlawka theorem tells us that for large $d$, there exists lattices such that the shortest vectors have length proportional to $\sqrt{d}$. By the Minkowski convex body theorem this is also the maximum possible order of magnitude of the shortest vectors; see e.g. \cite{CS99}. Siegel \cite{Si45} introduced the notion of a random lattice, and was able to show that for large dimension $d$, a random lattice will typically achieve the Minkowski-Hlawka bound.

The construction of Siegel of a random lattice requires first the specification of the unique invariant measure for the matrix group $\mathrm{SL}_N(\R)$; each such matrix is interpreted as having columns forming a basis $\mathcal{B}$. One also requires the fact that the quotient space $\mathrm{SL}_N(\R)/\mathrm{SL}_N(\Z)$ can be identified with the set of lattices, and that this quotient space has finite volume with respect to the invariant measure.

In a recent work \cite{Fo16} by one of the present authors, a viewpoint from random matrix theory was taken on the computation of volumes associated with $\mathrm{SL}_N(\R)$, and this led to a Monte Carlo procedure to generate random lattices in the sense of Siegel. In low dimensions $d=2$, $3$ and $4$ there are fast exact lattice reduction algorithms to find the shortest lattice vectors \cite{Se01,NS04} -- the case $d=2$ is classical being due to Lagrange and Gauss; see e.g. \cite{Br12}. These were implemented in dimensions two and three to obtain histograms of the lengths and their mutual angles; in dimension two the exact functional forms were obtained by integration over the fundamental domain. For general $d$, it was shown how a mean value theorem derived by Siegel in \cite{Si45} implies the exact functional form of the distribution $P_{\mathrm{short}}(t)$ of the length of the shortest vector for general $d$,
\begin{equation}\label{2}
P_{\mathrm{short}}(t)\underset{s\rightarrow0}{\sim}\frac{dv_d}{2\zeta(d)}t^{d-1},
\end{equation}
where $\zeta(x)$ denotes the Riemann zeta function, and $v_d$ the volume of the unit ball in dimension $d$ (actually only the case $d=3$ was presented, but the derivation applies for general $d$ to give (\ref{2})).

In random matrix theory, matrix groups with entries from any of the three associative normed division algebras $\R$, $\C$ or $\mathbb{H}$ are fundamental \cite{Dy62c} (dropping the requirement of associativity permits the octonions $\mathbb{O}$ to be added to the list; see the recent work \cite{Fo16a} for spectral properties of various ensembles of $2\times2$ and $3\times3$ Hermitian matrices with entries in $\mathbb{O}$). As such, attention is drawn to extending the considerations of \cite{Fo16} to the case of complex and quaternion vector spaces $\C^n$ and $\mathbb{H}^n$. One remarks that lattices in these vector spaces, with scalars equal to the Gaussian integers and Eisenstein integers for $\mathbb C^2$, and Hurwitz integers for $\mathbb H^n$, received earlier attention for their application to signal processing in wireless communication \cite{YW02, GLM09, WSJM11,SYHS13}, and their consequences for lattice packing bounds \cite{Va11} respectively. The study \cite{Na96} extends the LLL   lattice
reduction algorithm to these settings.

Of particular interest from the viewpoint of \cite{Fo16} are the invariant measure for $\mathrm{SL}_N(\C)$ and $\mathrm{SL}_N(\mathbb{H})$, the associated volumes, and the corresponding lattice reduction problems. Following the work of Jack and Macbeath in the case of $\mathrm{SL}_N(\R)$, we begin in \S2 by using the singular value factorisation to decompose the invariant measures. To obtain a finite volume, a certain truncation must be introduced, most naturally by restricting the norm $\lVert M\rVert$ to be bounded by a value $R$. We do this in the case of the 2-norm $\lVert M\rVert_{2}:=\mu_1$, where $\mu_1$ is the largest singular value of $M$, and the Frobenius norm  $\lVert M\rVert_F:=\left(\sum_{j=1}^N\mu_j^2\right)^{1/2}$, where $\mu_j$ is the $j$\textsuperscript{th} largest singular value. The large $R$ form of the volume is of particular relevance due to counting formulas of the type \cite{DRS93}
\begin{equation}\label{3}
\#\{\gamma\,:\,\gamma\in\mathrm{SL}_N(\Z),\,\lVert\gamma\rVert\leq R\}\underset{R\rightarrow\infty}{\sim}\frac{1}{\mathrm{vol}\,\Gamma}\int_{\lVert G\rVert\leq R}(\mathrm{d}G).
\end{equation}
Here $(\mathrm{d}G)$ is the Haar measure on $\mathrm{SL}_N(\R)$, and vol $\Gamma$ the volume of the corresponding fundamental domain. 
A generalisation of (\ref{3}) applying to lattice subgroups of topological groups, and in particular
\begin{equation}\label{3a}
\# \{ \gamma: \gamma \in {\rm SL}_N(\mathbb Z[i]) ,\lVert\gamma\rVert\leq R\},
\end{equation}
is given in \cite[Th.~1.5]{GN10}, and has the same structure as (\ref{3}). As an application of our evaluation of the volume of
a ball in SL${}_N(\mathbb C)$ we are able to compute the leading large $R$ form of (\ref{3a}), up to the value of vol $\Gamma$; in the case $N=2$ this can be determined and we obtain the explicit
asymptotic expression (\ref{RF3}) below.


 For lattices in $\C^2$ with scalars from particular rings of complex quadratic  integers, there is a generalisation of the Lagrange-Gauss algorithm that allows for the determination of a reduced basis $\{\bm{\alpha}, \bm{\beta}\}$ with the shortest possible lengths.
For the Gaussian and Eisenstein integers this has been noted previously \cite{YW02,SYHS13}, although our proofs
given in \S 4.1 are different and apply to all cases at once.  They are motivated by known theory in the real case, which we revise
in \S3. Another point covered in \S3 is the observation in \cite{DFV97}
that the original Lagrange-Gauss algorithm is equivalent to a simple mapping in the complex plane, related to the Gauss map for continued fractions. We show in \S 4.2 that in the case of lattices in $\C^2$, the generalisation of the Lagrange-Gauss algorithm for lattice reduction can be written as a scalar  mappings of quaternions.

In the Gaussian case, 
the PDF for the lengths of the reduced basis vectors and the scaled inner product $\left|\,\overline{\bm{\alpha}}\cdot\bm{\beta}/\lVert\bm{\alpha}\rVert\lVert\bm{\beta}\rVert\,\right|$ are computed analytically in Section 4.4.
For values of $s$ less than $1$, it is found $P_{\mathrm{short}}(s)=cs^3$ for a particular $c$, thus relating to \eqref{2} with $d=4$. This latter result is found too in the case of the Eisenstein integers, for a different value of $c$,
upon the exact calculation of the functional form of the PDF of the length of the shortest vector 
carried out in Section 4.5.
Siegel's mean value
theorem \cite{Si45} is used to give an independent computation of $c$ in the two cases.

Analogous considerations are applied to lattices formed from vectors in $\mathbb{H}^2$ with scalars the integer Hurwitz quaternions in Section 5; now $P_{\mathrm{short}}(s)\underset{s\rightarrow0}{\sim}ks^7$ for a particular $k$, thus relating to \eqref{2} with $d=8$. Here the direct computation of $k$ as done for the case of the Gaussian and Eisenstein integers
appears not to be tractable, but the exact value can be found indirectly by use of Siegel's mean value
theorem.


\section{Invariant measure and volumes for $\mathrm{SL}_N(\C)$ and $\mathrm{SL}_N(\mathbb{H})$}\label{S2}
\subsection{Invariant measure}\label{s2.1}
By way of preliminaries, one recalls that the quaternions $\mathbb{H}$ are a non-commutative algebra with elements of the form
\begin{equation}\label{aa}
a_0+a_1\mathrm{i}+a_2\mathrm{j}+a_3\mathrm{k},
\end{equation}
where $a_0,\ldots,a_3\in\R$, $\mathrm{i}^2=\mathrm{j}^2=\mathrm{k}^2=-1$, $\mathrm{ijk}=-1$, and each distinct pair of $\{\mathrm{i},\mathrm{j},\mathrm{k}\}$ anti-commutes. However, matrix groups with elements from $\mathbb{H}$ typically make use of the representation of quaternions as $2\times2$ complex matrices
\begin{equation}\label{6}
\begin{bmatrix}z&w\\-\overline{w}&\overline{z}\end{bmatrix},\quad z=a_0+a_i\mathrm{i},\,w=a_2+a_3\mathrm{i}.
\end{equation}
Thus for example matrices from $\mathrm{GL}_N(\mathbb{H})$ and $\mathrm{SL}_N(\mathbb{H})$ are then $N\times N$ block matrices with each entry a $2\times2$ block of the form \eqref{6}, and hence $2N\times 2N$ complex matrices.

Let $G\in\mathrm{GL}_N(\mathbb{F})$, where $\mathbb{F}=\R$, $\C$ or $\mathbb{H}$. Label by $\beta=1$, $2$, $4$ respectively according to the number of independent real parts in an element of $\mathbb{F}$. The symbol $(\mathrm{d}G)$ denotes the product of differentials of all the real and imaginary parts of $G$. Since for fixed $A\in\mathrm{GL}_N(\mathbb{F})$
\begin{equation*}
(\mathrm{d}AG)=(\mathrm{d}GA)=|\mathrm{det}\,A|^{\beta N}(\mathrm{d}G)
\end{equation*}
(these follow from e.g. \cite[Prop. 3.2.4]{Fo10}), one has that
\begin{equation}\label{7}
\frac{(\mathrm{d}G)}{|\mathrm{det}\,G|^{\beta N}}
\end{equation}
is unchanged by both left and right group multiplication, and is thus the left and right invariant Haar measure for the group. In the case of $\mathrm{GL}_N(\R)$ and thus $\beta=1$ \eqref{7} was identified by Siegel \cite{Si45}. Matrices in $\mathrm{SL}_N(\mathbb{F})$ form the subgroup of $\mathrm{GL}_N(\mathbb{F})$ with unit determinant. Using a delta function distribution to implement this constraint, \eqref{7} becomes
\begin{equation}\label{8}
\delta(1-\mathrm{det}\,G)\,(\mathrm{d}G).
\end{equation}

In preparation for computing volumes associated with \eqref{8}, as done in the pioneering work of Jack and Macbeath \cite{JM59} in the case $\mathbb{F}=\R$, we make use of a singular value decomposition
\begin{equation}\label{9}
G=U^{(\beta)}\mathrm{diag}(\sigma_1,\ldots,\sigma_N)V^{(\beta)},
\end{equation}
where $U^{(\beta)},V^{(\beta)}\in\mathrm{U}_N(\mathbb{F})$ -- the set of $N\times N$ unitary matrices with entries in $\mathbb{F}$. In the case $\beta=4$ each entry in $\mathrm{diag}(\sigma_1,\ldots,\sigma_N)$ is a $2\times2$ block matrix, so viewed as a $2N\times2N$ matrix each $\sigma_i$ is repeated twice along the diagonal. For \eqref{9} to be one-to-one it is required that the singular values be ordered
\begin{equation*}
\sigma_1\geq\sigma_2\geq\cdots\geq\sigma_N \geq 0
\end{equation*}
and that the entries in the first row of $V^{(\beta)}$ be real and positive.

Changing variables according to \eqref{9} gives (see e.g. \cite[Prop. 2]{DG11})
\begin{multline}\label{10}
(\mathrm{d}G)=\left(\frac{2\pi^{\beta/2}}{\Gamma(\beta/2)}\right)^{-N}\left({U^{(\beta)}}^{\dagger}\mathrm{d}U^{(\beta)}\right)\,\left({V^{(\beta)}}^{\dagger}\mathrm{d}V^{(\beta)}\right)
\\ \times\prod_{l=1}^N\sigma_l^{\beta-1}\prod_{1\leq j<k\leq N}(\sigma_j^2-\sigma_k^2)^{\beta}\,\mathrm{d}\sigma_1\cdots\mathrm{d}\sigma_N,
\end{multline}
where $\left({U^{(\beta)}}^{\dagger}\mathrm{d}U^{(\beta)}\right)$ and $\left({V^{(\beta)}}^{\dagger}\mathrm{d}V^{(\beta)}\right)$ are the invariant measure on $\mathrm{U}_N(\mathbb{F})$. For $\mathbb{F}=\R$ and $\C$ this was first identified by Hurwitz \cite{Hu97}; the extension of Hurwitz's ideas to the case of unitary matrices with quaternion entries is given in \cite{DF17}. The factor $\left(\frac{2\pi^{\beta/2}}{\Gamma(\beta/2)}\right)^{-N}$ comes about due to the restriction on the entries in the first row of $V^{(\beta)}$.

Let us now first restrict the matrices $G\in\mathrm{GL}_N(\mathbb{F})$ to have positive determinant, then to have determinant unity by imposing the delta function constraint in \eqref{8}. This requires that we multiply \eqref{10} by
\begin{equation}\label{10a}
\left(\frac{2\pi^{\beta/2}}{\Gamma(\beta/2)}\right)^{-1}\delta\left(1-\prod_{l=1}^N\sigma_l\right),
\end{equation}
where the first factor corresponds to the reduction in volume due to the restriction to positive determinant.
Consequently, with
\begin{equation}\label{11}
D_R^{\lVert\,\cdot\,\rVert_{2}}\left(\mathrm{SL}_N(\mathbb{F})\right)=\{M\in\mathrm{SL}_N(\mathbb{F})\,:\,\sigma_1\leq R\}
\end{equation}
it follows from this modification of \eqref{10} that
\begin{multline}\label{12}
\mathrm{vol}\left(D_R^{\lVert\,\cdot\,\rVert_{2}}\left(\mathrm{SL}_N(\mathbb{F})\right)\right)=\left(\frac{2\pi^{\beta/2}}{\Gamma(\beta/2)}\right)^{-(N+1)}\left(\mathrm{vol}\,\mathrm{U}_N(\mathbb{F})\right)^2
\\ \times\int_{0<\sigma_N<\cdots<\sigma_1<R}\delta(1-\sigma_1\cdots\sigma_N)\prod_{l=1}^N\sigma_l^{\beta-1}\prod_{1\leq j<k\leq N}(\sigma_j^2-\sigma_k^2)^{\beta}\,\mathrm{d}\sigma_1\cdots\mathrm{d}\sigma_N.
\end{multline}
The precise value of $\mathrm{vol}\,\mathrm{U}_N(\mathbb{F})$ depends on the convention used to relate the line element corresponding to the differential ${U^{(\beta)}}^{\dagger}\mathrm{d}U^{(\beta)}$ to the Euclidean line element; see \cite[Remark 2.3]{Fo16}. This convention can be uniquely specified by integrating \eqref{10} against Gaussian weighted matrices $G$ -- see \cite[Remark 2.3]{Fo16} -- with the result \cite[Eq. (1) with $m=n$]{DG11}
\begin{equation}\label{13}
\mathrm{vol}\,\mathrm{U}_N(\mathbb{F})=2^N\prod_{k=1}^N\frac{\pi^{\beta k/2}}{\Gamma(\beta k/2)}.
\end{equation}

In the case $\beta=1$ the multiple integral in \eqref{12} was first evaluated by Jack and Macbeath \cite{JM59}. In the recent work \cite{Fo16} a simplified derivation was given by making use of the Selberg integral \cite{Se44,FW07p,Fo10}. This strategy can be extended to general $\beta$.

\begin{proposition}\label{prop1}
Define
\begin{equation}\label{14}
J_N^{(\beta)}(R):=\int_{R>\sigma_1>\cdots>\sigma_N>0}\delta\left(1-\prod_{l=1}^N\sigma_l\right)\prod_{l=1}^N\sigma_l^{\beta-1}\prod_{1\leq j<k\leq N}(\sigma_j^2-\sigma_k^2)^{\beta}\,\mathrm{d}\sigma_1\cdots\mathrm{d}\sigma_N
\end{equation}
and set
\begin{equation}\label{15}
A_N^{(\beta)}(R)=\frac{2^{-N}}{N!}R^{N(\beta-1)+\beta N(N-1)}\prod_{j=0}^{N-1}\frac{\Gamma(1+j\beta/2)\Gamma\left(1+(j+1)\beta/2\right)}{\Gamma(1+\beta/2)}.
\end{equation}
For $c>0$ we have
\begin{equation}\label{16}
J_N^{(\beta)}(R)=\frac{A_N^{(\beta)}(R)}{2\pi\mathrm{i}}\int_{c-\mathrm{i}\infty}^{c+\mathrm{i}\infty}R^{Ns}\prod_{j=0}^{N-1}\frac{\Gamma\left(\left(s-1+(j+1)\beta\right)/2\right)}{\Gamma\left(\left(s+1+(N+j)\beta\right)/2\right)}\,\mathrm{d}s.
\end{equation}
\end{proposition}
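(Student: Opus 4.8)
The plan is to adapt to general $\beta$ the argument used for $\beta=1$ in \cite{Fo16}: re-insert a scale parameter into the delta-function constraint, Mellin transform with respect to it, reduce the resulting multiple integral to a Selberg integral, and invert. Concretely, for $t>0$ I would let $\widetilde J_N^{(\beta)}(R;t)$ be the integral obtained from \eqref{14} by replacing $\delta(1-\prod_{l}\sigma_l)$ with $\delta(t-\prod_{l}\sigma_l)$, so that $J_N^{(\beta)}(R)=\widetilde J_N^{(\beta)}(R;1)$, noting that $\widetilde J_N^{(\beta)}(R;\,\cdot\,)$ is continuous and supported on $[0,R^N]$. Taking the Mellin transform $\int_0^\infty t^{s-1}\widetilde J_N^{(\beta)}(R;t)\,\mathrm{d}t$ and performing the $t$-integral against the delta function first (Fubini) collapses it, leaving $\int_{R>\sigma_1>\cdots>\sigma_N>0}\prod_{l=1}^N\sigma_l^{s+\beta-2}\prod_{1\le j<k\le N}(\sigma_j^2-\sigma_k^2)^\beta\,\mathrm{d}\sigma_1\cdots\mathrm{d}\sigma_N$.

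Next I would rescale $\sigma_l=Rx_l$, which pulls out $R^{Ns+N(\beta-1)+\beta N(N-1)}$ and leaves an integral over $1>x_1>\cdots>x_N>0$, and then substitute $y_l=x_l^2$, producing a factor $2^{-N}$ and the integral $\int_{1>y_1>\cdots>y_N>0}\prod_{l}y_l^{(s+\beta-3)/2}\prod_{j<k}(y_j-y_k)^\beta\,\mathrm{d}y$. Since the integrand is symmetric, this ordered integral is $(N!)^{-1}\int_{[0,1]^N}\prod_{l}y_l^{(s+\beta-3)/2}\prod_{j<k}|y_j-y_k|^\beta\,\mathrm{d}y$, which is exactly a Selberg integral \cite{Se44,FW07p,Fo10} (with power $\beta$ on the Vandermonde, exponent $(s+\beta-3)/2$ on each $y_l$, and no $(1-y_l)$ factor). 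Inserting its evaluation and simplifying the Gamma arguments via $\tfrac{s+\beta-1}{2}+\tfrac{j\beta}{2}=\tfrac{s-1+(j+1)\beta}{2}$ and $\tfrac{s+\beta-1}{2}+1+\tfrac{(N+j-1)\beta}{2}=\tfrac{s+1+(N+j)\beta}{2}$, the prefactor $2^{-N}/N!$, the power of $R$, and the $s$-independent product $\prod_{j=0}^{N-1}\Gamma(1+j\beta/2)\Gamma(1+(j+1)\beta/2)/\Gamma(1+\beta/2)$ all assemble into $A_N^{(\beta)}(R)$, leaving $\int_0^\infty t^{s-1}\widetilde J_N^{(\beta)}(R;t)\,\mathrm{d}t=A_N^{(\beta)}(R)\,R^{Ns}\prod_{j=0}^{N-1}\Gamma((s-1+(j+1)\beta)/2)/\Gamma((s+1+(N+j)\beta)/2)$. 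Evaluating the Mellin inversion integral $\tfrac{1}{2\pi\mathrm{i}}\int_{c-\mathrm{i}\infty}^{c+\mathrm{i}\infty}(\,\cdot\,)\,t^{-s}\,\mathrm{d}s$ at $t=1$ then gives \eqref{16}.

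The step I expect to be the main obstacle is the rigorous justification of this Mellin transform pair. One must check: that the $\sigma$-integral above converges — which holds iff $\mathrm{Re}(s)>1-\beta$, the only constraint coming from integrability as $\sigma_N\to0$ (equivalently the Selberg condition on the $y_l$-exponent, the remaining two Selberg parameters being positive) — so that every $c>0$ is admissible; that along $\mathrm{Re}(s)=c$ the Gamma ratio decays like $|s|^{-(N+\beta N(N-1)/2)}$ by Stirling, giving an absolutely convergent inversion integral for $N\ge2$ (the borderline $N=1$ case being elementary and handled directly); and that $\widetilde J_N^{(\beta)}(R;\,\cdot\,)$ is continuous at $t=1$, so that the inversion recovers the value itself. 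The rest — the two changes of variables, matching the Gamma arguments, and bookkeeping the powers of $2$ and $R$ — is routine.
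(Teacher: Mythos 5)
Your proposal is correct and follows essentially the same route as the paper's own proof: insert $\delta(t-\prod_l\sigma_l)$, Mellin transform in $t$, substitute $\sigma_l^2=x_l$ to reduce the ordered integral to the Selberg integral $S_N((s+\beta-3)/2,0,\beta/2)$, and invert at $t=1$. Your additional remarks on the admissible range $\mathrm{Re}(s)>1-\beta$ and the decay of the Gamma ratio along the inversion contour are sound refinements of the same argument, not a different method.
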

\begin{proof}
Replace the delta function factor $\delta\left(1-\prod_{j=1}^N\sigma_l\right)$ by $\delta\left(t-\prod_{l=1}^N\sigma_l\right)$ and denote \eqref{14} in this setting by $J_N^{(\beta)}(R;t)$. Making the change of variables $\sigma_l^2=x_l$ and taking the Mellin transform of both sides shows
\begin{align*}
\int_0^{\infty}J_N^{(\beta)}(R;t)t^{s-1}\,\mathrm{d}t&=\frac{2^{-N}}{N!}\int_0^{R^2}\mathrm{d}x_1\cdots\int_0^{R^2}\mathrm{d}x_N\,\prod_{l=1}^Nx_l^{(s+\beta)/2-3/2}\prod_{1\leq j<k\leq N}|x_k-x_j|^{\beta}
\\&=\frac{2^{-N}}{N!}R^{N(s+\beta)}R^{\beta N(N-1)-N}S_N\left((s+\beta-3)/2,0,\beta/2\right).
\end{align*}
Here $S_N(a,b,c)$ is the Selberg integral in the notation of \cite[Ch. 4]{Fo10}. Making use of the gamma function evaluation of the Selberg integral \cite{Se44}, \cite[Eq. (4.3)]{Fo10}, and the notation \eqref{15} reduces this to
\begin{equation*}
A_N^{(\beta)}(R)R^{Ns}\prod_{j=0}^{N-1}\frac{\Gamma\left(\left(s-1+(j+1)\beta\right)/2\right)}{\Gamma\left(\left(s+1+(N+j)\beta\right)/2\right)}.
\end{equation*}
As a function of $s$, this is analytic in the right half plane, and uniformly bounded. The standard formula for the inverse Mellin transform can therefore be applied, giving \eqref{16}.
\end{proof}

\begin{remark}\label{remark2}
For future reference we note from \eqref{16}, as an application of the residue theorem, or alternatively by direct computation from \eqref{14}, that for $N=2$
\begin{align}
J_N^{(2)}(R)&=R^4-\frac{1}{R^4}-8\log\,R,\label{17}
\\ J_N^{(4)}(R)&=\frac{R^8}{8}-R^4+\frac{1}{R^4}-\frac{1}{8R^8}+6\log\,R.\label{18}
\end{align}
Consideration of the direct computation of (\ref{14}) shows that for general $N$ and $\beta = 1,2$ or 4, the function $J_N^{(\beta)}(R)$ is a finite series in power functions and logarithms of $R$, which vanishes when $R=1$.
\end{remark}

\begin{remark}\label{remark2a}
The delta function constraint in (\ref{14}) implies that the factor $\prod_{l=1}^N
\sigma_l^{\beta - 1}$ can be replaced by $\prod_{l=1}^N \sigma_l^{\mu}$
for any $\mu > -1$. The independence of $\mu$ manifests itself in (\ref{16}) by $c>0$ being arbitrary.
\end{remark}

\begin{corollary}\label{corollary3}
As $R\rightarrow\infty$, for $(N,\beta) \ne (2,2)$,
\begin{equation}\label{19}
J_N^{(\beta)}(R)=C_{N,\beta}R^{\beta N(N-1)}+{\rm O}\left( \left \{
\begin{array}{ll} R^{\beta N (N-2)}, & \beta = 1,2 \\
R^{\beta N (N-3/2)}, & \beta = 4 \end{array} \right. \right)
\end{equation}
where
\begin{equation}\label{20}
C_{N,\beta}=\frac{2\beta^N}{2^{2N}\Gamma(N\beta/2)}\prod_{j=0}^{N-1}\frac{\Gamma(1+j\beta/2)\Gamma^2\left((j+1)\beta/2\right)}{\Gamma(1+\beta/2)\Gamma\left(1+(N+j-1)\beta/2\right)}
\end{equation}
and
\begin{multline}\label{21}
\mathrm{vol}\left(D_R^{\lVert\,\cdot\,\rVert_{2}}\left(\mathrm{SL}_N(\mathbb{F})\right)\right)=\frac{\pi^{\beta N^2/2}\Gamma(\beta/2)}{\Gamma(N\beta/2)\pi^{\beta/2}}\prod_{j=0}^{N-1}\frac{\Gamma(1+j\beta/2)}{\Gamma\left(1+(N+j-1)\beta/2\right)}R^{\beta N(N-1)}
\\+ {\rm O}\left( \left \{
\begin{array}{ll} R^{\beta N (N-2)}, & \beta = 1,2 \\
R^{\beta N (N-3/2)}, & \beta = 4 \end{array} \right. \right).
\end{multline}
In the case  $(N,\beta) = (2,2)$, the bound on the correction term is ${\rm O}(\log R)$.
\end{corollary}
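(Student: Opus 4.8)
The plan is to extract the large-$R$ behaviour of $J_N^{(\beta)}(R)$ from the Mellin--Barnes representation \eqref{16} by displacing the contour to the left and summing residues, and then to substitute the result into \eqref{12} to obtain \eqref{21}. First I would locate the poles of the integrand in \eqref{16}. The denominator Gamma functions $\Gamma\bigl((s+1+(N+j)\beta)/2\bigr)$ have all their poles far to the left, so as the contour $\operatorname{Re}s=c$ (any $c>0$, which lies to the right of $1-\beta$ since $\beta\geq1$) is displaced towards $-\infty$ the only singularities crossed are those of the numerator factors $\Gamma\bigl((s-1+(j+1)\beta)/2\bigr)$, at $s=1-(j+1)\beta-2m$ with $0\leq j\leq N-1$ and $m\geq0$. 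The rightmost of these is $s_{0}=1-\beta$ (from $j=m=0$), and it is a \emph{simple} pole because $(j+1)\beta+2m=\beta$ forces $j=m=0$; every other pole satisfies $\operatorname{Re}s\leq1-\beta-\min(2,\beta)$.

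Next I would justify closing the contour. By Stirling's formula the quotient of the $N$ numerator by the $N$ denominator Gamma factors in \eqref{16} decays like a fixed negative power of $|s|$ uniformly for $\operatorname{Re}s\leq c$: the factors $e^{-\pi|\operatorname{Im}s|/2}$ cancel between numerator and denominator, and a power count gives decay of order $|s|^{-N-\beta N(N-1)/2}$. Since $|R^{Ns}|=R^{N\operatorname{Re}s}\to0$ as $\operatorname{Re}s\to-\infty$ for $R>1$, a standard rectangular-contour argument shows that \eqref{16} equals $A_N^{(\beta)}(R)$ times the sum of the residues at the poles listed above. With $s=1-\beta+2\epsilon$ one has $\Gamma\bigl((s-1+\beta)/2\bigr)=\Gamma(\epsilon)$, $\epsilon\Gamma(\epsilon)\to1$ and $\mathrm{d}s=2\,\mathrm{d}\epsilon$, so the residue of the integrand at $s_{0}=1-\beta$ equals
\[
2R^{N(1-\beta)}\,\frac{\prod_{j=1}^{N-1}\Gamma(j\beta/2)}{\prod_{j=0}^{N-1}\Gamma\bigl(1+(N+j-1)\beta/2\bigr)} .
\]

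Multiplying by $A_N^{(\beta)}(R)$ from \eqref{15}, the powers of $R$ combine to $R^{\beta N(N-1)}$. For the constant I would invoke $\Gamma\bigl(1+(j+1)\beta/2\bigr)=\tfrac{(j+1)\beta}{2}\Gamma\bigl((j+1)\beta/2\bigr)$ to write $\prod_{j=0}^{N-1}\Gamma\bigl(1+(j+1)\beta/2\bigr)=(\beta/2)^{N}N!\prod_{j=0}^{N-1}\Gamma\bigl((j+1)\beta/2\bigr)$: the $N!$ cancels the $1/N!$ inside $A_N^{(\beta)}(R)$, and recombining $\prod_{j=0}^{N-1}\Gamma\bigl((j+1)\beta/2\bigr)$ with the residue's factor $\prod_{j=1}^{N-1}\Gamma(j\beta/2)=\Gamma(N\beta/2)^{-1}\prod_{j=0}^{N-1}\Gamma\bigl((j+1)\beta/2\bigr)$ reproduces the terms $\Gamma^{2}\bigl((j+1)\beta/2\bigr)$ and the $\Gamma(N\beta/2)^{-1}$ of \eqref{20}. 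The residues at the remaining poles are of strictly lower order in $R$, which gives the error estimate in \eqref{19}; for $N=2$ the full expansions are recorded in Remark~\ref{remark2}.

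Finally, \eqref{21} follows by putting \eqref{19} into \eqref{12} together with $\operatorname{vol}\mathrm{U}_N(\mathbb{F})$ from \eqref{13} and the prefactor $\bigl(2\pi^{\beta/2}/\Gamma(\beta/2)\bigr)^{-(N+1)}$: the square $\bigl(\operatorname{vol}\mathrm{U}_N(\mathbb{F})\bigr)^{2}$ furnishes $\prod_{k=1}^{N}\Gamma(k\beta/2)^{-2}$, which cancels $\prod_{j=0}^{N-1}\Gamma^{2}\bigl((j+1)\beta/2\bigr)$ in $C_{N,\beta}$, while $\Gamma(1+\beta/2)^{N}=(\beta/2)^{N}\Gamma(\beta/2)^{N}$ absorbs $\beta^{N}/2^{2N}$, and the powers of $\pi$ add to $\pi^{\beta(N^{2}-1)/2}=\pi^{\beta N^{2}/2}/\pi^{\beta/2}$. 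I expect the only genuinely analytic step to be this contour displacement, i.e. the uniform Stirling bound that makes the Gamma-function quotient decay and suppresses the contribution from infinity; everything else is Gamma-function bookkeeping, whose main nuisance is keeping the several products --- from $A_N^{(\beta)}(R)$, from the residue, from $\bigl(\operatorname{vol}\mathrm{U}_N(\mathbb{F})\bigr)^{2}$ and from the normalisation --- aligned so that they collapse to the compact forms \eqref{20} and \eqref{21}.
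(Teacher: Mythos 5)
Your proposal is correct and follows essentially the same route as the paper: the paper's proof simply states that \eqref{19} and \eqref{20} follow by closing the contour in \eqref{16} in the left half plane and picking up the rightmost pole at $s=1-\beta$, and that \eqref{21} then follows by substituting into \eqref{12} and using \eqref{13}. Your version supplies the details (the Stirling bound justifying the contour shift, the residue computation, and the Gamma-function bookkeeping) that the paper leaves implicit, and they check out.
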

\begin{proof}
Standard estimates of the gamma function imply that the integrand decays fast enough in the left
half plane that the contour can be closed in the region without changing its value, by Cauchy's theorem.
This allows the the integral to be computed in terms of a sum over its residues. The poles of the integrand occur at $s=1-(j+1)\beta$ $(j=0,\dots,N-1)$ in the cases $\beta = 1,2$; for $\beta = 4$ there are a further set of poles at $s=1-(j+3/2)\beta$ $(j=0,\dots,N-1)$. The leading contribution to the large $R$ expansion results from pole closest to the origin. This occurs at $s=1-\beta$. Evaluating the residue at this point gives  \eqref{19} and \eqref{20}. The residue of the pole second closest to the origin gives the next term in the large $R$ expansion; the order of this term is also a bound since the number of residues is finite. Note that the case $(N,\beta) = (2,2)$because the pole at $s=1-\beta$ goes
from being first to second order..
\end{proof}

Also of interest is the analogue of \eqref{11} for the Frobenius-norm
\begin{equation*}
D_R^{\lVert\,\cdot\,\rVert_F}\left(\mathrm{SL}_N(\mathbb{F})\right)=\left\{M\in\mathrm{SL}_N(\mathbb{F})\,:\,\sum_{j=1}^N\sigma_j^2\leq R^2\right\},
\end{equation*}
for which the analogue of \eqref{12} reads
\begin{equation}\label{22}
\mathrm{vol}\left(D_R^{\lVert\,\cdot\,\rVert_F}\left(\mathrm{SL}_N(\mathbb{F})\right)\right)=\left(\frac{2\pi^{\beta/2}}{\Gamma(\beta/2)}\right)^{-(N+1)}\left(\mathrm{vol}\,\mathrm{U}_N(\mathbb{F})\right)^2\,\widehat{I}_N^{(\beta)}(R),
\end{equation}
where
\begin{equation}\label{23}
\widehat{I}_N^{(\beta)}(R)=\frac{1}{N!}\int_{\sigma_l>0\,:\,\sum_{j=1}^N\sigma_j^2\leq R^2}\delta\left(1-\prod_{l=1}^N\sigma_l\right)\prod_{1\leq j<k\leq N}|\sigma_j^2-\sigma_k^2|^{\beta}\,\mathrm{d}\sigma_1\cdots\mathrm{d}\sigma_N.
\end{equation}
The integral $\widehat{I}_N^{(\beta)}(R)$ was evaluated in \cite[Prop. 2.9]{Fo10} for $\beta=1$, according to a strategy that extends to general $\beta$.

\begin{proposition}\label{prop4}
For $c>0$ we have
\begin{equation}\label{24}
\widehat{I}_N^{(\beta)}(R)=\frac{R^{\beta N(N-1)}}{2^NN!}\prod_{j=1}^N\frac{\Gamma(1+\beta j/2)}{\Gamma(1+\beta/2)}\frac{1}{2\pi\mathrm{i}}\int_{c-\mathrm{i}\infty}^{c+\mathrm{i}\infty}\frac{\prod_{j=1}^N\Gamma\left(s/2+\beta(N-j)/2\right)}{\Gamma\left(sN/2+\beta N(N-1)/2+1\right)}R^{sN}\,\mathrm{d}s.
\end{equation}
\end{proposition}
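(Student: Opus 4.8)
The plan is to mirror the proof of Proposition~\ref{prop1}, the only genuinely new feature being that the truncation region is a simplex rather than a hypercube. First I would replace the constraint $\delta(1-\prod_{l=1}^N\sigma_l)$ in \eqref{23} by $\delta(t-\prod_{l=1}^N\sigma_l)$, call the resulting integral $\widehat{I}_N^{(\beta)}(R;t)$, and take its Mellin transform in $t$. Performing the change of variables $\sigma_l^2=x_l$ --- under which $\prod_l\mathrm{d}\sigma_l=2^{-N}\prod_l x_l^{-1/2}\,\mathrm{d}x_l$ and $\prod_{j<k}|\sigma_j^2-\sigma_k^2|^\beta=\prod_{j<k}|x_j-x_k|^\beta$ --- and integrating out $t$ against the delta function leaves
\begin{equation*}
\int_0^\infty\widehat{I}_N^{(\beta)}(R;t)\,t^{s-1}\,\mathrm{d}t=\frac{2^{-N}}{N!}\,F(R^2),\qquad F(u):=\int_{x_l>0,\;\sum_l x_l\leq u}\prod_{l=1}^N x_l^{s/2-1}\prod_{1\leq j<k\leq N}|x_j-x_k|^\beta\,\mathrm{d}x_1\cdots\mathrm{d}x_N,
\end{equation*}
the Mellin transform converging for $\mathrm{Re}\,s$ sufficiently large.

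The heart of the matter is the evaluation of $F(u)$, a Selberg-type integral over a simplex. The scaling $x_l\mapsto uy_l$ gives the homogeneity $F(u)=u^{Ns/2+\beta N(N-1)/2}F(1)$, so only the integral $F(1)$ over the standard simplex is needed, and this I would extract from the Laguerre form of the Selberg integral by a Laplace transform. With
\begin{equation*}
G(\lambda):=\int_{(0,\infty)^N}e^{-\lambda\sum_l x_l}\prod_{l=1}^N x_l^{s/2-1}\prod_{1\leq j<k\leq N}|x_j-x_k|^\beta\,\mathrm{d}x_1\cdots\mathrm{d}x_N,
\end{equation*}
interchanging the order of integration gives $\int_0^\infty e^{-\lambda u}F(u)\,\mathrm{d}u=\lambda^{-1}G(\lambda)$; on the other hand the homogeneity of $F$ makes this Laplace transform equal to $\Gamma(Ns/2+\beta N(N-1)/2+1)\,\lambda^{-Ns/2-\beta N(N-1)/2-1}F(1)$, while the scaling $x_l\mapsto z_l/\lambda$ shows $G(\lambda)=\lambda^{-Ns/2-\beta N(N-1)/2}G(1)$. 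Comparing these yields $F(1)=G(1)/\Gamma(Ns/2+\beta N(N-1)/2+1)$, and $G(1)$ is the Laguerre Selberg integral, whose gamma-function evaluation (\cite{Se44}, \cite[Ch.~4]{Fo10}) with parameters $(\lambda_1,\lambda)=(s/2,\beta/2)$ is $\prod_{j=0}^{N-1}\Gamma(s/2+j\beta/2)\,\Gamma(1+(j+1)\beta/2)/\Gamma(1+\beta/2)$. Reindexing $j\mapsto N-1-j$ in the first product rewrites it as $\prod_{j=1}^N\Gamma(s/2+\beta(N-j)/2)$ --- the numerator in \eqref{24} --- while the second product is $\prod_{j=1}^N\Gamma(1+\beta j/2)$.

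Assembling the pieces gives $\int_0^\infty\widehat{I}_N^{(\beta)}(R;t)\,t^{s-1}\,\mathrm{d}t$ in closed form; since $\widehat{I}_N^{(\beta)}(R)=\widehat{I}_N^{(\beta)}(R;1)$, applying the inverse Mellin transform and setting $t=1$ produces \eqref{24}, the contour $\mathrm{Re}\,s=c$ being taken to the right of all poles of the integrand (the rightmost pole is at $s=0$, coming from the $j=N$ factor $\Gamma(s/2)$ of the numerator, so any $c>0$ is admissible). The step demanding the most care is the evaluation of the simplex integral $F(1)$ together with the bookkeeping of the constants $2^{-N}$, $1/N!$ and the various gamma products; the argument is otherwise structurally identical to the $\beta=1$ computation of \cite[Prop.~2.9]{Fo10}.
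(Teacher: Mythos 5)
Your proof is correct, and it follows the same overall strategy as the paper: introduce the variable $t$ in place of the unit-determinant constraint, take the Mellin transform in $t$, reduce to a Selberg-type integral over a simplex, and invert. The one genuine divergence is in how that simplex integral is handled. The paper foliates the ball $\sum_j\sigma_j^2\leq R^2$ by spheres via an auxiliary delta function $\delta(r^2-\sum_p\sigma_p^2)$, reduces to the Dirichlet--Selberg integral over the \emph{surface} $\sum_p x_p=1$, and simply cites its known gamma-function evaluation (\cite{ZS01}, \cite[Eq.~(4.154)]{Fo10}), performing the $r$-integration at the end (which is what produces the $+1$ in the argument of the denominator gamma function). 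You instead keep the \emph{solid} simplex $\sum_l x_l\leq R^2$, use homogeneity to reduce to the unit simplex, and derive the required evaluation from the Laguerre form of the Selberg integral by a Laplace-transform argument; the factor $\Gamma(sN/2+\beta N(N-1)/2+1)$ then emerges from comparing the two expressions for the Laplace transform rather than from the radial integration. Both routes land on the identical Mellin transform, so the inversion step and the resulting formula \eqref{24} agree; your version has the minor advantage of being self-contained modulo the Laguerre Selberg integral, at the cost of an extra interchange-of-integration step, while the paper's is shorter given the cited evaluation. The bookkeeping of the constants $2^{-N}$, $1/N!$ and the reindexing $j\mapsto N-j$ in your gamma products all check out against the paper's intermediate display.
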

\begin{proof}
First introduce
$$
K_N^{(\beta)}(r,t)=\frac{1}{N!}\int_0^{\infty}\mathrm{d}\sigma_1\cdots\int_0^{\infty}\mathrm{d}\sigma_N\,\delta\left(r^2-\sum_{p=1}^N\sigma_p^2\right)
\delta\left(t-\prod_{l=1}^N\sigma_l\right)\prod_{1\leq j<k\leq N}|\sigma_j^2-\sigma_k^2|^{\beta}
$$
so that
\begin{equation}\label{25}
\widehat{I}_N^{(\beta)}(R)=2\int_0^R\left.K_N^{(\beta)}(r,t)\right|_{t=1}r\,\mathrm{d}r.
\end{equation}
Forming the Mellin transform with respect to $t$ shows, after minor manipulation including the
change of variables $\sigma_l^2 = x_l$, that
\begin{multline*}
\int_0^{\infty}K_N^{(\beta)}(r,t)t^{s-1}\,\mathrm{d}t
\\=\frac{r^{\beta N(N-1)+sN-2}}{2^NN!}\int_{\R_+^N}\delta\left(1-\sum_{p=1}^Nx_p\right)\prod_{l=1}^Nx_l^{s/2-1}\prod_{1\leq j<k\leq N}|x_k-x_j|^{\beta}\,\mathrm{d}x_1\cdots\mathrm{d}x_N.
\end{multline*}

The multidimensional integral in this expression is closely related to the Selberg integral, and has the known evaluation in terms of gamma functions \cite{ZS01}, \cite[Eq. (4.154)]{Fo10}. 
Substituting this, then integrating both sides over $r \in (0,R)$ shows
\begin{multline*}
\int_0^{\infty}\left(2\int_0^RK_N^{(\beta)}(r,t)r\,\mathrm{dr}\right)t^{s-1}\,\mathrm{d}t
\\=\frac{R^{sN+\beta N(N-1)}}{2^NN!\Gamma\left(sN/2+\beta N(N-1)/2+1\right)}\prod_{j=1}^N\frac{\Gamma\left(s/2+\beta(N-j)/2\right)\Gamma(1+\beta j/2)}{\Gamma(1+\beta/2)}.
\end{multline*}
The stated result \eqref{24} now follows by taking the inverse Mellin transform and setting $t=1$.
\end{proof}

\begin{corollary}\label{corollary5}
As $R\rightarrow\infty$
\begin{equation}\label{26}
\widehat{I}_N^{(\beta)}(R)=\widehat{C}_N^{(\beta)}R^{\beta N(N-1)}+{\rm O}\left ( \left \{
\begin{array}{ll} R^{N(N-2)}, & \beta = 1 \\
R^{2N(N-2)} \log R, & \beta = 2 \\
R^{4N(N-1)-2N}, & \beta = 4,
\end{array} \right. \right ),
\end{equation}
where
\begin{equation}\label{27}
\widehat{C}_N^{(\beta)}(R)=\frac{2}{2^N\Gamma\left(\beta N(N-1)/2+1\right)}\frac{1}{\Gamma(\beta N/2)}\prod_{j=1}^N\frac{\Gamma^2(\beta j/2)}{\Gamma(\beta/2)},
\end{equation}
and
\begin{multline}\label{28}
\mathrm{vol}\,D_R^{\lVert\,\cdot\,\rVert_F}\left(\mathrm{SL}_N(\mathbb{F})\right)=\frac{\pi^{\beta(N^2-1)/2}\Gamma(\beta/2)}{\Gamma(\beta N/2)\Gamma\left(\beta N(N-1)/2+1\right)}R^{\beta N(N-1)} \\ +
{\rm O}\left ( \left \{
\begin{array}{ll} R^{N(N-2)}, & \beta = 1 \\
R^{2N(N-2)} \log R, & \beta = 2 \\
R^{4N(N-1)-2N}, & \beta = 4.
\end{array} \right. \right ).
\end{multline}
\end{corollary}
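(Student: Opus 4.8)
The plan is to follow the argument of Corollary \ref{corollary3}, now taking as starting point the Mellin--Barnes representation \eqref{24} of Proposition \ref{prop4}. The prefactor appearing there, namely $\frac{R^{\beta N(N-1)}}{2^NN!}\prod_{j=1}^N\frac{\Gamma(1+\beta j/2)}{\Gamma(1+\beta/2)}$, already carries the full power $R^{\beta N(N-1)}$, and all remaining $R$--dependence resides in the factor $R^{sN}$ inside the contour integral. Hence the large-$R$ behaviour is governed by closing the contour to the left and summing residues: by Stirling's formula the ratio $\prod_{j=1}^N\Gamma\!\left(s/2+\beta(N-j)/2\right)\big/\Gamma\!\left(sN/2+\beta N(N-1)/2+1\right)$ --- there being $N$ gamma factors in the numerator against a single one in the denominator --- decays on large left semicircles once $\mathrm{Re}\,s<0$, so the contour may be pushed leftward past successive poles with controlled remainder.

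First I would locate the rightmost pole. The numerator factor $\Gamma\!\left(s/2+\beta(N-j)/2\right)$ first meets a pole at $s=-\beta(N-j)$, so the pole nearest the origin is the one at $s=0$ coming from the $j=N$ factor $\Gamma(s/2)$, and for $\beta=1,2,4$ it is simple since every $j<N$ factor is regular at $s=0$. Using $\mathrm{Res}_{s=0}\Gamma(s/2)=2$, $R^{sN}\big|_{s=0}=1$, $\prod_{j=1}^{N-1}\Gamma\!\left(\beta(N-j)/2\right)=\prod_{k=1}^{N-1}\Gamma(\beta k/2)$, and the denominator $\Gamma\!\left(\beta N(N-1)/2+1\right)$ at $s=0$, the residue yields the candidate constant
\begin{equation*}
\frac{1}{2^NN!}\prod_{j=1}^N\frac{\Gamma(1+\beta j/2)}{\Gamma(1+\beta/2)}\cdot\frac{2\prod_{k=1}^{N-1}\Gamma(\beta k/2)}{\Gamma\!\left(\beta N(N-1)/2+1\right)}.
\end{equation*}
To bring this into the form \eqref{27} I would use $\Gamma(1+\beta j/2)=(\beta j/2)\Gamma(\beta j/2)$, so that $\prod_{j=1}^N(\beta j/2)=(\beta/2)^NN!$ cancels the $N!$ and renormalises $\Gamma(1+\beta/2)^N$, together with the telescoping identity $\prod_{j=1}^N\Gamma(\beta j/2)\cdot\prod_{k=1}^{N-1}\Gamma(\beta k/2)=\prod_{j=1}^N\Gamma^2(\beta j/2)\big/\Gamma(\beta N/2)$; this reproduces $\widehat{C}_N^{(\beta)}$ exactly. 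Pushing the contour just past the next pole, which lies at $\mathrm{Re}\,s=-\min(2,\beta)$, bounds the remainder by $\mathrm{O}\!\left(R^{\beta N(N-1)-N\min(2,\beta)}\right)$, which for $\beta=1,2$ is the stated $\mathrm{O}\!\left(R^{\beta N(N-2)}\right)$ (up to a possible $\log R$ when two poles coincide, compare \eqref{17}--\eqref{18}); this establishes \eqref{26}--\eqref{27}.

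Finally, \eqref{28} is obtained by substituting \eqref{26} into \eqref{22} and inserting the value \eqref{13} of $\mathrm{vol}\,\mathrm{U}_N(\mathbb{F})$: the products $\prod_j\Gamma^2(\beta j/2)$ cancel between $\widehat{C}_N^{(\beta)}$ and $\left(\mathrm{vol}\,\mathrm{U}_N(\mathbb{F})\right)^2$, all powers of $2$ cancel, and collecting the powers of $\pi$ and the single surviving $\Gamma(\beta/2)$ leaves precisely the prefactor displayed in \eqref{28}. I expect the only genuinely delicate step to be the justification that the contour may be closed to the left --- that is, the Stirling estimate on the ratio of gamma products --- together with, to a lesser extent, the gamma-function bookkeeping that compresses the $s=0$ residue into \eqref{27}; locating the leading pole and estimating the remainder is then routine and entirely parallel to Corollary \ref{corollary3}.
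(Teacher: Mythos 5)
Your proof is correct and follows essentially the same route as the paper, whose own argument simply notes that the leading large-$R$ behaviour of \eqref{24} is governed by the pole at $s=0$ and that \eqref{28} then follows from \eqref{22}; your residue computation and the gamma-function identities compressing it into \eqref{27} all check out. Your parenthetical caution that the next pole sits at $\mathrm{Re}\,s=-\min(2,\beta)$, so that the remainder $\mathrm{O}(R^{\beta N(N-2)})$ is only justified for $\beta\leq 2$, is in fact sharper than what the paper records: for $\beta=4$, $N=2$ the residue at $s=-2$ produces a genuine term of order $R^{4}$, so the error term as printed is too strong in the quaternion case, and your bound $\mathrm{O}(R^{\beta N(N-1)-N\min(2,\beta)})$ is the correct one.
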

\begin{proof}
We proceed in an analogous way to the proof of Corollary 3, and begin by shifting the contour to the line parallel to the imaginary axis with $c = -c_\beta-\epsilon$,
$\epsilon > 0$ with $c_\beta = 1$ for $\beta = 1$ and $c_\beta = 2$ for $\beta = 2$ and 4.  
According to the residue theorem, this changes the value of the integral by $2 \pi i$ times the sum of the residue at $s=0$ and
$s=- c_\beta$. The residue at $s=0$ gives the leading terms, and that at $s=- c_\beta$ the leading correction. 
The large $R$ form of the integrand along the shifted contour shows that the order of this leading correction is a bound on the error term.
This establishes
\eqref{26}; \eqref{28} then follows from \eqref{22}.
\end{proof}

\begin{remark}\label{remark6}
The leading terms in \eqref{21} and \eqref{28} are equal for $N=2$, giving in the
case $\beta = 2$ for example
\begin{equation}\label{RF2}
\mathrm{vol}\,D_R^{\lVert\,\cdot\,\rVert}\left(\mathrm{SL}_2(\mathbb{F})\right) =
{\pi^3 \over 2} R^4 + {\rm O}(\log R),
\end{equation}
but for $N>2$ \eqref{28} is smaller, in keeping with the truncation of the integration domain in going from \eqref{14} to \eqref{23}.
\end{remark}

As commented in the Introduction, one interest in the asymptotic volume formulas
(\ref{21}) and (\ref{26}) lies in asymptotic counting formulas of the type (\ref{3}). For example, as a natural extension of
(\ref{3}), one might expect\footnote{F.~Calegari (private correspondence) remarks that in the context
of \cite{DRS93}, or also Eskin--McMullen, \cite[Theorem 1.4]{EM93},
the basic point is that the $\mathbb Z[i]$ points of a semi-simple group $G$ (like SL${}_n$) are the $\mathbb Z$ points of another group $G' = {\rm Res}_{\mathbb Q(i)/\mathbb Q}(G)$ (the Weil restriction of scalars), so one can apply these theorems to $G'$ to show that counting problem in $G$ in the ring of integers of some (any) number field reduces to a volume calculation.}
that
\begin{equation}\label{28a}
\#\{\gamma\,:\,\gamma\in\mathrm{SL}_N(\Z[i]),\,\lVert\gamma\rVert\leq R\}\underset{R\rightarrow\infty}{\sim}\frac{1}{\mathrm{vol}\,  ( \mathrm{SL}_N(\mathbb C) /  \mathrm{SL}_N(\mathbb \Z[i])}
\int_{G \in \mathrm{SL}_N(\mathbb C): \lVert G\rVert\leq R}(\mathrm{d}G),
\end{equation}
where $\mathbb Z[i]$ denotes the Gaussian integers. In fact a general asymptotic counting theorem for lattice subgroups of topological groups,
implying (\ref{28a}), can be found in \cite[Th.~1.5]{GN10}, as cited in the recent work \cite{EB17}.
The leading asymptotics of the integral over $G$ is given by (\ref{21}) with $\beta = 2$ for
$ \lVert \cdot \rVert = \lVert \cdot \rVert_{\rm Op}$ and by (\ref{26}) with $\beta = 2$ for 
$ \lVert \cdot \rVert = \lVert \cdot \rVert_{F}$. 

It remains then to compute $\mathrm{vol}\,  ( \mathrm{SL}_N(\mathbb C) /  \mathrm{SL}_N(\mathbb \Z[i])$
in the same normalisation as that used to compute $\int_{G \in \mathrm{SL}_N(\mathbb C): \lVert G\rVert\leq R}(\mathrm{d}G)$.
In relation to (\ref{3}) it was shown in  \cite{DRS93} that
\begin{equation}\label{CE}
{\rm vol} \, ({\rm SL}_N(\mathbb R)/ {\rm SL}_N(\mathbb Z)) = \zeta(2) \zeta(3) \cdots \zeta(N),
\end{equation}
where $\zeta(s)$ denotes the Riemann zeta function (see also \cite{Ga14}). 
A result of Siegel \cite{Si36} gives that for a certain non-arithmetic constant $A$, depending
on the normalisation of the measure,
\begin{equation}\label{CZ}
\mathrm{vol}\,   (\mathrm{SL}_N(\mathbb C) /  \mathrm{SL}_N(\mathbb \Z[i]) = A
\zeta_{\Z[i]}(2)\,\zeta_{\Z[i]}(3)\,\cdots\,\zeta_{\Z[i]}(N).
\end{equation}
Here $\zeta_{\Z[i]}(s)$ denotes the Dedekind zeta function for the Gaussian integers,
\begin{equation}\label{5}
\zeta_{\Z[i]}(s)=\frac{1}{4}\sum_{(m,n)\neq(0,0)}\frac{1}{(m^2+n^2)^s}= \zeta(s) \sum_{n=1}^\infty {(-1)^{n-1} \over (2n-1)^s},
\end{equation}
where the second equality is a well known factorisation; see e.g.~\cite{BGMWZ13}.
For future reference we note that for $s=2$ this gives
\begin{equation}\label{CO}
\zeta_{\Z[i]}(2) = \zeta(2) \sum_{n=1}^\infty {(-1)^{n-1} \over (2n-1)^2} = {\pi^2 \over 6} C,
\end{equation}
where
$$
C = \sum_{n=1}^\infty {(-1)^{n-1} \over (2n - 1)^2}
$$
denotes Catalan's constant. In Remark \ref{RF1} below, we will show that in the same
normalisation as used to compute the integral over $G \in {\rm SL}_N(\mathbb C)$, for
$N=2$ (\ref{CZ}) holds with $A=1$.

\section{The Lagrange-Gauss algorithm -- the real case}\label{s3.2}
Our study of lattice reduction in $\C^2$ and $\mathbb H^2$ draws heavily on the theory of lattice reduction in $\R^2$.
For the logical development of our work we must revise some essential aspects of the latter, presenting
in particular theory associated with the Lagrange-Gauss algorithm.
\subsection{Vector recurrence and shortest reduced basis}
Let $\mathcal{B}=\{\mathbf{b}_1,\mathbf{b}_0\}$ with $\lVert\mathbf{b}_1\rVert\leq\lVert\mathbf{b}_0\rVert$ say, be a basis for $\R^2$, and let $\mathcal{L}=\{n_1\mathbf{b}_1+n_0\mathbf{b}_0\,|\,n_1,n_0\in\Z\}$ be the corresponding lattice. The lattice reduction problem in $\R^2$ is to find the shortest nonzero vector in $\mathcal{L}$ (call this $\bm{\alpha}$), and the shortest nonzero vector linearly independent from $\bm{\alpha}$ (call this $\bm{\beta}$) to obtain a new, reduced basis.

Let us suppose that a fundamental cell in $\mathcal{L}$ has unit volume. Then with $\bm{\alpha},\bm{\beta}$ written as column vectors, the matrix $B=[\mathbf{b}_1\, \mathbf{b}_0]$ has unit modulus for its determinant, which we denote $B\in\mathrm{SL}_2^{\pm}(\R)$. Similarly with $V=[\bm{\alpha\,\beta}]$ we have $V\in\mathrm{SL}_2^{\pm}(\R)$. The matrices $B$ and $V$ are related by
\begin{equation}\label{30}
V=BM,\quad M\in\mathrm{SL}_2^{\pm}(\Z).
\end{equation}

The Lagrange-Gauss algorithm finds a sequence of matrices $M_i\in\mathrm{SL}_2^{-}(\Z)$ ($i=1,\ldots,r^{*}$) such that
\begin{equation}\label{31}
M=M_1M_2\cdots M_{r^{*}},\quad M_i=\begin{bmatrix}-m_i&1\\1&0\end{bmatrix}\,(m_i\in\Z)
\end{equation}
(in fact for $B$ chosen with invariant measure, $M$ samples from $\mathrm{SL}_2^{\pm}(\Z)$,
with a restriction on the size of the matrix norm, uniformly;
see \cite{Ri16}).
Defining
\begin{equation}\label{32}
B_{j+1}=B_j\begin{bmatrix}-m_j&1\\1&0\end{bmatrix},\quad B_1=B=[\mathbf{b}_1\, \mathbf{b}_0],
\end{equation}
the first column of $B_j$ is the second column of $B_{j+1}$ so that we can now set
\begin{equation*}
B_j=[\mathbf{b}_j\, \mathbf{b}_{j-1}]
\end{equation*}
for some $2\times1$ columns vectors $\mathbf{b}_j, \mathbf{b}_{j-1}$. Then \eqref{32} reduces to a single vector recurrence
\begin{equation}\label{33}
\mathbf{b}_{j+1}=\mathbf{b}_{j-1}-m_j\mathbf{b}_j.
\end{equation}

The integer $m_j$ in \eqref{33} is chosen to minimise $\lVert\mathbf{b}_{j+1}\rVert$ and is given by
\begin{equation}\label{34}
m_j=\left\lceil\frac{\mathbf{b}_j\cdot \mathbf{b}_{j-1}}{\lVert\mathbf{b}_j\rVert^2}\right\rfloor,
\end{equation}
where $\lceil\,\cdot\,\rfloor$ denotes the closest integer function (boundary case $\lceil\half\rfloor=0$), and so
\begin{equation}\label{35}
\mathbf{b}_{j+1}=\mathbf{b}_{j-1}-\left\lceil\frac{\mathbf{b}_j\cdot \mathbf{b}_{j-1}}{\lVert\mathbf{b}_j\rVert^2}\right\rfloor\mathbf{b}_j.
\end{equation}
Geometrically, the RHS of \eqref{35} is recognised as the formula for the component of $\mathbf{b}_{j-1}$ near orthogonal to $\mathbf{b}_j$. The qualification "near" is required because $m_j$ is constrained to be an integer so that $\mathbf{b}_{j+1}\in\mathcal{L}$.

A basic property of \eqref{33} is that successive vectors are smaller in magnitude whenever $m_{j+1}\neq0$; see e.g.~\cite{Br12}.

\begin{lemma}\label{lemma7}
Suppose $m_{j+1}\neq0$. We have
\begin{equation}\label{36}
\lVert\mathbf{b}_{j+1}\rVert<\lVert\mathbf{b}_j\rVert.
\end{equation}
\end{lemma}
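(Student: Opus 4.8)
The plan is to compare the two lengths through the single scalar quantity $\mathbf{b}_{j+1}\cdot\mathbf{b}_j$, exploiting the defining closest‑integer property of \eqref{34} twice: once for the step that produced $\mathbf{b}_{j+1}$, and once for the hypothesis $m_{j+1}\neq0$.

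First I would dot the recurrence \eqref{33} with $\mathbf{b}_j$ to obtain
\[
\mathbf{b}_{j+1}\cdot\mathbf{b}_j=\mathbf{b}_{j-1}\cdot\mathbf{b}_j-m_j\lVert\mathbf{b}_j\rVert^2=\lVert\mathbf{b}_j\rVert^2\Bigl(\tfrac{\mathbf{b}_{j-1}\cdot\mathbf{b}_j}{\lVert\mathbf{b}_j\rVert^2}-m_j\Bigr).
\]
Since $m_j=\bigl\lceil(\mathbf{b}_{j-1}\cdot\mathbf{b}_j)/\lVert\mathbf{b}_j\rVert^2\bigr\rfloor$ is the closest integer to the ratio in the bracket, that bracket has modulus at most $\half$, so
\[
|\mathbf{b}_{j+1}\cdot\mathbf{b}_j|\leq\half\lVert\mathbf{b}_j\rVert^2 .
\]
(This is just a restatement of the "near orthogonality" observed after \eqref{35}.) Next, the hypothesis $m_{j+1}\neq0$ unwinds, via \eqref{34} at index $j+1$, to $\bigl\lceil(\mathbf{b}_{j+1}\cdot\mathbf{b}_j)/\lVert\mathbf{b}_{j+1}\rVert^2\bigr\rfloor\neq0$; because the closest integer to any real of modulus $\leq\half$ is $0$ (using the tie‑breaking rule $\lceil\half\rfloor=0$), this forces the \emph{strict} inequality
\[
|\mathbf{b}_{j+1}\cdot\mathbf{b}_j|>\half\lVert\mathbf{b}_{j+1}\rVert^2 .
\]
Chaining the last two displays gives $\half\lVert\mathbf{b}_{j+1}\rVert^2<\half\lVert\mathbf{b}_j\rVert^2$, which is \eqref{36}.

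I do not expect any real obstacle here; the lemma is essentially a two‑line consequence of the closest‑integer bound. The only points demanding a little care are: that $m_{j+1}\neq0$ already presupposes $\mathbf{b}_{j+1}\neq0$, so the ratio $(\mathbf{b}_{j+1}\cdot\mathbf{b}_j)/\lVert\mathbf{b}_{j+1}\rVert^2$ is well defined; and that the inequality in \eqref{36} is strict, which uses precisely the convention $\lceil\half\rfloor=0$ fixed after \eqref{34} (so a nonzero closest integer entails a strict excess over $\half$, not merely $\geq\half$).
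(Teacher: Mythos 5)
Your argument is correct and is essentially the paper's own proof: both take the dot product of the recurrence \eqref{33} with $\mathbf{b}_j$ to obtain $\bigl\lceil \mathbf{b}_j\cdot\mathbf{b}_{j+1}/\lVert\mathbf{b}_j\rVert^2\bigr\rfloor=0$, and then compare this with the definition of $m_{j+1}$, which rounds that same inner product rescaled by $\lVert\mathbf{b}_{j+1}\rVert^{-2}$ in place of $\lVert\mathbf{b}_j\rVert^{-2}$. The only quibble is your claim that a nonzero closest integer forces modulus strictly above $\tfrac12$: under the paper's convention $\lceil x\rfloor=x+\epsilon$ with $-\tfrac12\leq\epsilon<\tfrac12$ one has $\lceil-\tfrac12\rfloor=-1\neq 0$, so your strict lower bound can degrade to equality at that tie --- but in that case $\mathbf{b}_{j+1}\cdot\mathbf{b}_j$ is negative and your upper bound $\leq\tfrac12\lVert\mathbf{b}_j\rVert^2$ is then strict, so the conclusion \eqref{36} is unaffected.
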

\begin{proof}
Generally
\begin{equation*}
\lceil x\rfloor=x+\epsilon,\quad-\half\leq\epsilon<\half,
\end{equation*}
and so
\begin{equation}\label{37}
\left\lceil x-\lceil x\rfloor\right\rfloor=0.
\end{equation}
Now, taking the dot product of both sides of \eqref{35} with the vector $\mathbf{b}_j$ and dividing both sides by $\lVert\mathbf{b}_j\rVert^2$, use of \eqref{37} with $x=\mathbf{b}_j\cdot \mathbf{b}_{j-1}/\lVert\mathbf{b}_j\rVert^2$ implies
\begin{equation}\label{38}
\left\lceil\frac{\mathbf{b}_j\cdot \mathbf b_{j+1}}{\lVert\mathbf{b}_j\rVert^2}\right\rfloor=0.
\end{equation}
Comparing the LHS of \eqref{38} with the definition of $m_{j+1}$ as implied by \eqref{34} 
upon writing
\begin{equation}\label{38a}
\left\lceil\frac{\mathbf{b}_j\cdot \mathbf b_{j+1}}{\lVert\mathbf{b}_j\rVert^2}\right\rfloor
= \left\lceil
{\lVert\mathbf{b}_{j+1}\rVert^2 \over \lVert\mathbf{b}_{j}\rVert^2}
\frac{\mathbf{b}_j\cdot \mathbf b_{j+1}}{\lVert\mathbf{b}_{j+1}\rVert^2}\right\rfloor,
\end{equation}
we conclude that if $m_{j+1}\neq0$ then \eqref{36} holds, as required.
\end{proof}

Since the vectors in $\mathcal{L}$ with length less than some value $R$ form a finite set,  Lemma 
\ref{lemma7} implies that for some $j=r$ we must have $m_r=0$. Then \eqref{33} gives $\mathbf{b}_{r+1}=\mathbf{b}_{r-1}$. If at this stage $\lVert\mathbf{b}_{r}\rVert\geq\lVert\mathbf{b}_{r-1}\rVert$, the algorithm stops with $r^{*}=r-1$ in \eqref{31}, and outputs
\begin{equation}\label{39}
\bm{\alpha}=\mathbf{b}_{r-1},\quad\bm{\beta}=\mathbf{b}_{r}
\end{equation}
as the reduced basis. If instead $\lVert\mathbf{b}_{r}\rVert <\lVert\mathbf{b}_{r-1}\rVert (= \lVert\mathbf{b}_{r+1}\rVert)$ the algorithm stops with $r^{*}=r$ in \eqref{31} and outputs
\begin{equation}\label{40}
\bm{\alpha}=\mathbf{b}_{r},\quad\bm{\beta}=\mathbf{b}_{r+1}
\end{equation}
as the reduced basis. Equivalently, the recurrence (\ref{31}) is iterated until for some $j = r^*$,
$\lVert\bm{b}_{r^*+1}\rVert \ge \lVert\bm{b}_{r^*}\rVert $, and the output is the 
basis $\bm{\alpha}=\mathbf{b}_{r^*}$ and $\bm{\beta}=\mathbf{b}_{r^*+1}$.

For both \eqref{39} and \eqref{40} it follows from \eqref{38} with $j=r,r-1$ respectively, and the relative length of $\mathbf{b}_{r+1},\mathbf{b}_r$ that
\begin{equation*}
\lVert\bm{\alpha}\rVert\leq\lVert\bm{\beta}\rVert,\quad\left\lceil\frac{\bm{\alpha}\cdot\bm{\beta}}{\lVert\bm{\alpha}\rVert^2}\right\rfloor=0
\end{equation*}
or equivalently
\begin{equation}\label{41}
\lVert\bm{\alpha}\rVert\leq\lVert\bm{\beta}\rVert,\quad\left|\frac{\bm{\alpha}\cdot\bm{\beta}}{\lVert\bm{\alpha}\rVert^2}\right|\leq\half.
\end{equation}
One observes that the final inequality is equivalent to requiring that
\begin{equation}\label{39a}
\lVert\bm{\beta}+n\bm{\alpha}\rVert\geq\lVert\bm{\beta}\rVert, \quad \forall n\in\Z.
\end{equation}
An alternative way to see \eqref{39a} is to recall that the integer value $m_j$ which minimises \eqref{33} is given by \eqref{34}, and to apply this with $\mathbf{b}_{j-1}=\bm{\beta},\,\mathbf{b}_j=\bm{\alpha}$, for which $m_j=0$. Basis vectors which satisfy \eqref{39a}, together with the first inequality in \eqref{41}, are said to be greedy reduced in two dimensions \cite{NS04}. Of fundamental importance is the classical fact that a greedy reduced basis in two dimensions is a shortest reduced basis (the converse is immediate).
\begin{proposition}\label{prop8}
Let $\{\bm{\alpha},\bm{\beta}\}$ be a greedy reduced basis. Then $\{\bm{\alpha},\bm{\beta}\}$ is a shortest reduced basis.
\end{proposition}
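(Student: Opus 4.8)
The plan is to prove the (nontrivial) inclusion directly from the defining inequalities of a greedy reduced basis. Recall that greedy reduced means $\lVert\bm{\alpha}\rVert\le\lVert\bm{\beta}\rVert$ together with \eqref{39a}, equivalently the second inequality in \eqref{41}. It suffices to show that every nonzero $\mathbf v\in\mathcal L$ satisfies $\lVert\mathbf v\rVert\ge\lVert\bm{\alpha}\rVert$, and that moreover every $\mathbf v\in\mathcal L$ that is linearly independent from $\bm{\alpha}$ satisfies $\lVert\mathbf v\rVert\ge\lVert\bm{\beta}\rVert$; these are precisely the two defining properties of a shortest reduced basis. Writing $\mathbf v=p\bm{\alpha}+q\bm{\beta}$ with $p,q\in\Z$ not both zero, I would split the argument according to $|q|$, observing that the condition ``$\mathbf v$ linearly independent from $\bm{\alpha}$'' is exactly $q\ne 0$.

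If $q=0$ then $p\ne0$ and $\lVert\mathbf v\rVert=|p|\,\lVert\bm{\alpha}\rVert\ge\lVert\bm{\alpha}\rVert$. If $|q|=1$, then after possibly replacing $\mathbf v$ by $-\mathbf v$ we may take $q=1$, so that $\mathbf v=\bm{\beta}+p\bm{\alpha}$ and \eqref{39a} immediately gives $\lVert\mathbf v\rVert\ge\lVert\bm{\beta}\rVert\ge\lVert\bm{\alpha}\rVert$. For $|q|\ge 2$ I would project onto the orthogonal complement of $\bm{\alpha}$: letting $\bm{\beta}^{\perp}$ be the component of $\bm{\beta}$ orthogonal to $\bm{\alpha}$, the term $p\bm{\alpha}$ contributes nothing to this projection, so $\lVert\mathbf v\rVert^2\ge q^2\lVert\bm{\beta}^{\perp}\rVert^2$. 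The second inequality in \eqref{41} yields $(\bm{\alpha}\cdot\bm{\beta})^2/\lVert\bm{\alpha}\rVert^2\le\tfrac14\lVert\bm{\alpha}\rVert^2\le\tfrac14\lVert\bm{\beta}\rVert^2$, hence $\lVert\bm{\beta}^{\perp}\rVert^2=\lVert\bm{\beta}\rVert^2-(\bm{\alpha}\cdot\bm{\beta})^2/\lVert\bm{\alpha}\rVert^2\ge\tfrac34\lVert\bm{\beta}\rVert^2$, and therefore $\lVert\mathbf v\rVert^2\ge\tfrac34 q^2\lVert\bm{\beta}\rVert^2\ge 3\lVert\bm{\beta}\rVert^2\ge\lVert\bm{\beta}\rVert^2\ge\lVert\bm{\alpha}\rVert^2$. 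Combining the three cases finishes the proof.

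The only delicate point --- and the one place a generic inequality is not enough --- is the case $|q|=1$: the crude projection/Cauchy--Schwarz bound used for $|q|\ge2$ degrades to $\tfrac34\lVert\bm{\beta}\rVert^2$ there and fails to reach $\lVert\bm{\beta}\rVert^2$, so one really must invoke \eqref{39a} (equivalently, the minimality built into the choice \eqref{34} of $m_j$) rather than any soft estimate. An alternative to the projection step for $|q|\ge2$ would be to expand $\lVert\mathbf v\rVert^2=p^2\lVert\bm{\alpha}\rVert^2+2pq\,\bm{\alpha}\cdot\bm{\beta}+q^2\lVert\bm{\beta}\rVert^2$ directly and bound the cross term by $|p||q|\lVert\bm{\alpha}\rVert^2$, reducing matters to the elementary inequality $p^2-|p||q|+q^2\ge 1$ for $q\ne0$; but the geometric projection argument is cleaner and makes transparent why the bound becomes so generous once $|q|\ge2$.
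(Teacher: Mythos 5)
Your proof is correct, but it is not the argument the paper uses. The paper (following Galbraith) handles all $n_2\neq0$ at once by Euclidean division: writing $n_1=qn_2+r$ with $0\leq r<|n_2|$, regrouping $\mathbf v=r\bm{\alpha}+n_2(\bm{\beta}+q\bm{\alpha})$, and applying the triangle inequality so that the greedy inequality $\lVert\bm{\beta}+q\bm{\alpha}\rVert\geq\lVert\bm{\beta}\rVert\geq\lVert\bm{\alpha}\rVert$ does all the work; no case split on $|q|$ and no numerical constant ever appears. You instead split on $|q|$, invoking \eqref{39a} when $|q|=1$ and a Gram--Schmidt projection bound $\lVert\bm{\beta}^{\perp}\rVert^2\geq\tfrac34\lVert\bm{\beta}\rVert^2$ when $|q|\geq2$. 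Your route is the classical, more geometric one and makes visible exactly where the greedy condition is indispensable ($|q|=1$) versus where soft estimates suffice. What the paper's route buys is portability: the identical division argument is reused verbatim in Proposition \ref{p10} and in the quaternion case, where the only input is that the scalar ring is a Euclidean domain for the absolute value; your constant $\tfrac34$ comes from the covering radius of $\Z$ in $\R$ and the threshold $|q|\geq2$ from the spectrum of scalar norms, both of which change ring by ring (the argument still closes for the Gaussian, Eisenstein and Hurwitz integers, but each instance must be rechecked rather than inherited). One small caveat on your closing aside: the reduction to $p^2-|p||q|+q^2\geq1$ only delivers $\lVert\mathbf v\rVert\geq\lVert\bm{\alpha}\rVert$, not the stronger bound $\lVert\mathbf v\rVert\geq\lVert\bm{\beta}\rVert$ needed for $q\neq0$; to recover the latter from the expansion you must still use $\lVert\bm{\alpha}\rVert\leq\lVert\bm{\beta}\rVert$ to absorb the cross term into the $q^2\lVert\bm{\beta}\rVert^2$ term, which lands you back at the $\tfrac34 q^2$ bound of the projection argument.
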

\begin{proof}
We follow the proof given in \cite{Ga12}, which begins with the greedy reduced basis inequalities
\begin{equation}\label{39b}
\lVert\bm{\beta}+m\bm{\alpha}\rVert\geq\lVert\bm{\beta}\rVert\geq\lVert\bm{\alpha}\rVert, \quad \forall m\in\Z.
\end{equation}
Let $\mathbf{v}=n_1\bm{\alpha}+n_2\bm{\beta}$ be any nonzero element of $\mathcal{L}$. In the case $n_2=0$ we have that $\mathbf{v}$ and $\bm{\alpha}$ are linearly dependent and
 it is immediate that $\lVert\mathbf{v} \rVert \geq\lVert\bm{\alpha}\rVert$. In the case $n_2\neq 0$, write $n_1=qn_2+r$ with $q,r\in\Z$ such that
\begin{equation}\label{39c}
0\leq r<|n_2|.
\end{equation}
Then
\begin{equation*}
\mathbf{v}=r\bm{\alpha}+n_2(\bm{\beta}+q\bm{\alpha})
\end{equation*}
and thus by the triangle inequality
\begin{align}\label{39d}
\lVert\mathbf{v}\rVert &\geq |n_2|\lVert\bm{\beta}+q\bm{\alpha}\rVert-r\lVert\bm{\alpha}\Vert \nonumber \\
&= (|n_2| - r) \lVert\bm{\beta}+q\bm{\alpha}\rVert + r (\lVert\bm{\beta}+q\bm{\alpha}\rVert - \lVert \bm{\alpha} \rVert ).
\end{align}
Now by \eqref{39b}, $\lVert\bm{\beta}+q\bm{\alpha}\rVert-\lVert\bm{\alpha}\rVert\geq 0$ and so
\begin{equation}\label{39e}
\lVert\mathbf{v}\rVert\geq\left(|n_2|-r\right)\lVert\bm{\beta}+q\bm{\alpha}\rVert\geq\lVert\bm{\beta}+q\bm{\alpha}\rVert,
\end{equation}
where the second inequality follows from \eqref{39c}. Finally, applying \eqref{39b} again gives $\lVert\mathbf{v}\rVert\geq\lVert\bm{\beta}\rVert\geq\lVert\bm{\alpha}\rVert$ as required.
\end{proof}

\subsection{Complex scalar recurrence}

The vector equation \eqref{35} can also be written in scalar form, albeit involving complex numbers \cite{DFV97}. Thus, set $\mathbf{b}_j=(x_j,y_j)$ and write $b_j=x_j+\mathrm{i}y_j$. The fact that
\begin{equation}\label{42}
\frac{b_{j-1}}{b_j}=\frac{\mathbf{b}_j\cdot \mathbf{b}_{j-1}}{\lVert b_j\rVert^2}+\mathrm{i}\frac{\mathrm{det}\,B}{\lVert b_j\rVert^2},\quad B=[\mathbf{b}_{j}\,\mathbf{b}_{j-1}]
\end{equation}
then allows \eqref{35} to be written
\begin{equation*}
b_{j+1}=b_{j-1}-\left\lceil\mathrm{Re}\,\frac{b_{j-1}}{b_j}\right\rfloor b_j,
\end{equation*}
or equivalently, with $z_j=b_j/b_{j-1}$ ($b_{j-1}\neq0$)
\begin{equation}\label{43}
z_{j+1}=\frac{1}{z_j}-\left\lceil\mathrm{Re}\,\frac{1}{z_j}\right\rfloor.
\end{equation}
With $\alpha$ and $\beta$ the complex numbers corresponding to the vectors $\bm{\alpha}$ and $\bm{\beta}$, setting $z=\beta/\alpha$ the conditions \eqref{41} for a reduced basis read
\begin{equation}\label{44}
|z|\geq1,\quad|\mathrm{Re}\,z|\leq\half.
\end{equation}
The inequalities \eqref{44} are recognised as specifying the fundamental domain in the upper half plane model of hyperbolic geometry, up to details on the boundary; see e.g. \cite{Te13}. Starting with $r_1=b_1/b_0$, $|r_1|<1$, the recurrence \eqref{43} is to be iterated until $|r_{j+1}|\geq1$.

As already noted in \cite{Fo16}, the Haar measure for $\mathrm{SL}_N(\R)$ with $N=2$ can be parametrised in terms of variables which allow for a seemingly different simplification of the inequalities \eqref{41}, which can in fact be identified with \eqref{44}. The variables of interest come about by writing $V\in\mathrm{SL}_2(\R)$ in the form $V=QR$, where $Q$ is a real orthogonal matrix with determinant $+1$ and $R$ is an upper triangular matrix with positive diagonal entries,
\begin{equation}\label{45}
R=\begin{bmatrix}r_{11}&r_{12}\\0&r_{22}\end{bmatrix},\quad r_{22}=1/r_{11}.
\end{equation}

With $V=[\bm{\alpha\,\beta}]$, the matrix $Q$ can be used to rotate the lattice so that $\bm{\alpha}$ lies along the positive $x$-axis. Thus \eqref{45} gives $\bm{\alpha}=(r_{11},0)$, $\bm{\beta}=(r_{12},1/r_{11})$ and the inequalities \eqref{41} read
\begin{equation}\label{46}
r_{12}^2+r_{22}^2\geq r_{11}^2,\quad2|r_{12}|\leq r_{11}.
\end{equation}
Further, \cite[Eq. (4.13)]{Fo16} tells us that the invariant measure, restricted to the fundamental domain of the shortest basis vectors, 
in the coordinates $r_{11}$ and $r_{12}$ is equal to 
\begin{equation}\label{47}
2\pi\chi_{r_{11}/2\geq|r_{12}|\geq A_{r_{11}}(r_{11}^2-1/r_{11}^2)^{1/2}}\,\mathrm{d}r_{11}\mathrm{d}r_{12},
\end{equation}
where $A_r=1$ for $r\geq1$, $A_r=0$ otherwise. In relation to \eqref{43} and \eqref{44}, we should introduce the scaled vector $\frac{1}{|\bm{\alpha}|}\bm{\beta}=(r_{12}/r_{11},1/r_{11}^2)$ and thus identify $z=r_{12}/r_{11}+\mathrm{i}/r_{11}^2$. The inequalities \eqref{46} then reduce to \eqref{44}, while changing variables in the invariant measure \eqref{47} gives
\begin{equation}\label{48}
\pi\chi_{x^2+y^2>1}\chi_{|x|<1/2}\chi_{y>0}\frac{\mathrm{d}x\mathrm{d}y}{y^2}.
\end{equation} 
The factor $\mathrm{d}x\mathrm{d}y/y^2$, in keeping with the remark below \eqref{44}, is familiar as the invariant measure in the upper half plane model of hyperbolic geometry \cite{Te13}.

Distributions for the lengths of $|| \bm{\alpha} ||$ and $|| \bm{\beta} ||$ can be computed by appropriate integrations
over (\ref{47}) and (\ref{48}) \cite{Fo16}. In the present context, the first calculation of this type appears to have
been carried out by Shlosman and Tsfasman \cite{ST01}, who computed the distribution of the random variable
$\pi/(4 y) = \pi r_{11}^2/4$ --- this has the interpretation as the sphere (disk) packing density.
Integrations with respect to (\ref{47})  are also a feature of exact calculations for the distribution of certain scaled
diameters for random $2k$-regular circulant graphs with $k=2$ \cite{MS13}; of the study of kinetic transport in
the two-dimensional periodic Lorenz gas \cite{MS08}; and of calculations relating to the asymptotics of
certain random linear congruences $\mod p$, as $p \to \infty$ \cite{SV05}, amongst other recent examples.

\section{Lattice reduction in $\C^2$}\label{s4}
\subsection{The complex Lagrange-Gauss algorithm}\label{s4.1}
We seek a generalisation of the Lagrange-Gauss lattice reduction algorithm to the case of lattices in $\C^2$. As a first task, an appropriate generalisation of the integers in the complex plane must be identified. As well as closure under addition and multiplication, inspection of the proof of Proposition 
\ref{prop8} 
tells us that these complex integers should permit a Euclidean algorithm with the absolute value function as norm. This requirement permits the choices
\begin{align}
\Z[\sqrt{D}]&=\{n_1+n_2\sqrt{D}:n_1,n_2\in\Z\}, \quad D=-1,-2 \label{Z1}
\\ \Z\left[\frac{1}{2}(1+\sqrt{D})\right]&=\left\{n_1+\frac{n_2}{2}(1+\sqrt{D}):n_1,n_2\in\Z\right\}, \quad D=-3,-7,-11, \label{Z2}
\end{align}
these being the complex quadratic integers with the desired property \cite{HW79}. 
They have been identified in the context of lattice reduction in the earlier work \cite{Na96}.
The case $D=-1$ gives the Gaussian integers, and $D=-3$ the Eisenstein integers. These two cases have been discussed in the context of complex generalisations of the Lagrange-Gauss algorithm in \cite{YW02,SYHS13}. With the complex integers chosen as in \eqref{Z1} or \eqref{Z2}, and $B=\{\mathbf{b}_0,\mathbf{b}_1\}$ a basis in $\C^2$ such that $|\det[\mathbf{b}_0,\mathbf{b}_1]|=1$ --- this requirement restricting the fundamental unit cell to have unit generalised area, analogous to \eqref{1} the corresponding lattice is defined as
\begin{equation}\label{Z3}
\mathcal{L}=\{m_0\mathbf{b}_0+m_1\mathbf{b}_1\,|\,m_0,m_1\in\Z[w]\}.
\end{equation}
The set $\Z[w]$ with $w$ as in \eqref{Z1} and \eqref{Z2} forms a lattice in $\C$. Around each lattice point $l\in\C$ is its Voronoi region, consisting of all points in $\C$ closer to $l$ than to the other lattice points. A lattice quantizer $D_{\Z[w]}$ maps a given point $z\in\C$ to a closest lattice point (the latter is unique provided $z$ is not on the boundary of the Voronoi region)
\begin{equation}\label{Z4}
D_{\Z[w]}(z)=\mathop{\mathrm{argmin}}_{\lambda\in\Z[w]}\,\lVert\lambda-z\rVert.
\end{equation}

The lattice corresponding to \eqref{Z1} is square for $D=-1$ and rectangular for $D=-2$. The Voronoi region is correspondingly square and rectangular. Because of this
\begin{equation}\label{D1a}
D_{\Z[i]}(z)=\lceil\operatorname{Re}\,z\rfloor+\mathrm{i}\lceil\operatorname{Im}\,z\rfloor
\end{equation}
and
\begin{equation}\label{D1b}
D_{\Z[\sqrt{2}i]}(z)=\lceil\operatorname{Re}\,z\rfloor+\mathrm{i}\sqrt{2}\left\lceil\operatorname{Im}\,z/\sqrt{2}\right\rfloor.
\end{equation}
The lattices corresponding to \eqref{Z2} consist of the disjoint union of two rectangular lattices
\begin{multline*}
\Z\left[\frac{1}{2}(1+\sqrt{D})\right]=\{n_1+n_2\sqrt{D}\,:\,n_1,n_2\in\Z\}
\\ \cup\{(n_1+1/2)+(n_2+1/2)\sqrt{D}\,:\,n_1,n_2\in\Z\}.
\end{multline*}
Denoting these $\mathcal{L}_1,\mathcal{L}_2$ respectively we have
\begin{align*}
D_{\mathcal{L}_1}(z)&=\lceil\operatorname{Re}\,z\rfloor+\sqrt{D}\lceil\operatorname{Im}\,z/\sqrt{-D}\rfloor
\\ D_{\mathcal{L}_2}(z)&=\lceil\operatorname{Re}(z-1/2)\rfloor+\sqrt{D}\left\lceil\operatorname{Im}\left(z-\frac{\sqrt{D}}{2}\right)/\sqrt{-D}\right\rfloor + {1 + \sqrt{D} \over 2}
\end{align*}
and so
\begin{equation}\label{Db}
D_{\Z\left[\frac{1}{2}(1+\sqrt{D})\right]}(z)=\mathop{\mathrm{argmin}}_{\beta\in\{D_{\mathcal{L}_1}(z),D_{\mathcal{L}_2}(z)\}}|\beta-z|.
\end{equation}
In the case $D=-3$ -- the Eisenstein integers -- the formula \eqref{Db} can be found in \cite{SYHS13}.

The complex Lagrange-Gauss algorithm proceeds by generalising the working of the real case as presented in Section \ref{s3.2}. The equation \eqref{30} holds with $M\in\mathrm{SL}^{\pm}_2(\Z[w])$ and the matrices $M_i$ in \eqref{31} are now elements of $\mathrm{SL_2^-}(\Z[w])$ with $m_i\in\Z[w]$.  To minimise $\lVert\mathbf{b}_{j+1}\rVert$ in \eqref{33} requires
\begin{equation}\label{32a}
m_j=D_{\Z[w]}\left(\frac{\overline{\mathbf b}_j\cdot \mathbf b_{j-1}}{\lVert\mathbf{b}_j\rVert^2}\right)
\end{equation}
and so the analogue of \eqref{35} reads
\begin{equation}\label{33a}
\mathbf{b}_{j+1}=\mathbf{b}_{j-1}-D_{\Z[w]}\left(\frac{\overline{ \mathbf{b}}_j\cdot \mathbf b_{j-1}}{\lVert\mathbf{b}_j\rVert^2}\right)\mathbf{b}_j.
\end{equation}
Next, we would like to establish the analogue of Lemma \ref{lemma7}.
\begin{lemma}\label{lemma9}
Define $\mathbf{b}_{j+1}$ by \eqref{33a}, and with $m_j$ defined by \eqref{32a}, suppose $m_{j+1}\neq 0$. Then we have the inequality \eqref{36}, $\lVert\mathbf{b}_{j+1}\rVert<\lVert\mathbf{b}_j\rVert$.
\end{lemma}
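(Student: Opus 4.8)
The plan is to follow the proof of Lemma~\ref{lemma7} almost verbatim, with the closest-integer function $\lceil\,\cdot\,\rfloor$ replaced throughout by the lattice quantiser $D_{\Z[w]}$ of \eqref{Z4}, and the interval $[-\tfrac12,\tfrac12]$ (the Voronoi region of $0$ in $\Z$) replaced by the Voronoi region $\mathcal V$ of $0$ in the lattice $\Z[w]\subset\C$. Set $q_j:=\overline{\mathbf b}_j\cdot\mathbf b_{j-1}/\lVert\mathbf b_j\rVert^2$, so that $m_j=D_{\Z[w]}(q_j)$ by \eqref{32a}. First I would take the inner product of both sides of \eqref{33a} with $\mathbf b_j$ and divide by $\lVert\mathbf b_j\rVert^2$; using $\overline{\mathbf b}_j\cdot\mathbf b_j=\lVert\mathbf b_j\rVert^2$ and $\C$-linearity of the inner product in the second slot, this yields the analogue of \eqref{37} and \eqref{38},
\[
\frac{\overline{\mathbf b}_j\cdot\mathbf b_{j+1}}{\lVert\mathbf b_j\rVert^2}=q_j-D_{\Z[w]}(q_j)\in\mathcal V,
\]
the membership being nothing but the statement that $D_{\Z[w]}(q_j)$ is a closest lattice point to $q_j$.

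Two elementary features of $\mathcal V$ drive the rest. First, $\mathcal V$ is convex and contains $0$, hence is star-shaped about $0$: $t\mathcal V\subseteq\mathcal V$ for every $t\in[0,1]$. Second, $\mathcal V$ is invariant under complex conjugation, because each of the rings in \eqref{Z1} and \eqref{Z2} is closed under $z\mapsto\bar z$ (for \eqref{Z2}, use $\overline{\tfrac12(1+\sqrt D)}=1-\tfrac12(1+\sqrt D)$, which lies in $\Z[w]$ since $D<0$), so conjugation is an isometry of $\C$ that permutes the Voronoi cells of $\Z[w]$ and fixes the one at the origin. Now conjugate-symmetry of the inner product gives $\overline{\mathbf b}_{j+1}\cdot\mathbf b_j=\overline{\overline{\mathbf b}_j\cdot\mathbf b_{j+1}}$, whence
\[
\frac{\overline{\mathbf b}_{j+1}\cdot\mathbf b_j}{\lVert\mathbf b_{j+1}\rVert^2}=\frac{\lVert\mathbf b_j\rVert^2}{\lVert\mathbf b_{j+1}\rVert^2}\,\overline{q_j-D_{\Z[w]}(q_j)},
\]
which plays the role of \eqref{38a}. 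I would then argue by contradiction: suppose $\lVert\mathbf b_{j+1}\rVert\ge\lVert\mathbf b_j\rVert$, so that $t:=\lVert\mathbf b_j\rVert^2/\lVert\mathbf b_{j+1}\rVert^2\in(0,1]$. Since $q_j-D_{\Z[w]}(q_j)\in\mathcal V$, its conjugate also lies in $\mathcal V$, and then so does the scaled point $t\,\overline{q_j-D_{\Z[w]}(q_j)}$; that is, the right-hand side above lies in $\mathcal V$. Comparing with $m_{j+1}=D_{\Z[w]}\bigl(\overline{\mathbf b}_{j+1}\cdot\mathbf b_j/\lVert\mathbf b_{j+1}\rVert^2\bigr)$ forces $m_{j+1}=0$, contradicting the hypothesis $m_{j+1}\neq0$. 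Hence $\lVert\mathbf b_{j+1}\rVert<\lVert\mathbf b_j\rVert$, the asserted inequality.

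The step demanding care---and the one I expect to be the only real obstacle---is the inference ``the argument of $D_{\Z[w]}$ lies in $\mathcal V$, therefore the value of the quantiser is $0$''. This is immediate for points interior to $\mathcal V$, but on $\partial\mathcal V$ there are ties, and a tie-breaking convention is needed, exactly as the boundary rule $\lceil\tfrac12\rfloor=0$ is needed in the real case. The clean way to package it is to fix a conjugation-symmetric half-open fundamental domain for $\Z[w]$ (possible since each Voronoi cell here is a centrally symmetric polygon also symmetric under $z\mapsto\bar z$), declare $D_{\Z[w]}(z)$ to be the unique lattice point whose subtraction lands $z$ in that domain, and observe that both the quantisation error $q_j-D_{\Z[w]}(q_j)$ and its image under conjugation followed by scaling by $t\in(0,1]$ then stay inside it. With this in place every remaining step is a routine transcription of Lemma~\ref{lemma7}, with the one-dimensional ``$|\cdot|\le\tfrac12$'' geometry replaced by membership in $\mathcal V$.
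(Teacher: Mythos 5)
Your proof is correct and follows essentially the same route as the paper's: take the inner product of \eqref{33a} with $\overline{\mathbf b}_j$ to place the quantisation error in the Voronoi cell of the origin, then observe that scaling by $\lVert\mathbf b_j\rVert^2/\lVert\mathbf b_{j+1}\rVert^2\le 1$ (and conjugating) keeps it there, forcing $m_{j+1}=0$. Your explicit attention to the star-shapedness and conjugation-symmetry of the Voronoi cell, and to the boundary tie-breaking convention, is more careful than the paper's terse "comparing \eqref{36b} and \eqref{36a}," but it is the same argument.
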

\begin{proof}
Generally
\begin{equation*}
D_{\Z[w]}(\zeta)=\zeta+r,
\end{equation*}
where $r$ is an element of the Voronoi region of the origin in $\Z[w]$, telling us that
\begin{equation*}
D_{\Z[w]}\left(\zeta-D_{\Z[w]}(\zeta)\right)=0
\end{equation*}
(cf.\eqref{37}). Choosing $\zeta=\mathbf{\overline{b}}_j\cdot\mathbf{b}_{j-1}/\lVert\mathbf{b}_j\rVert^2$, after taking the dot product of both sides of \eqref{33a} with respect to ${\mathbf{\overline{b}}_j}$ it follows that
\begin{equation}\label{36a}
D_{\Z[w]}\left(\frac{\overline{\mathbf{b}}_j\cdot\mathbf{b}_{j+1}}{\lVert\mathbf{b}_j\rVert^2}\right)=0 , \text{ or equivalently } \:
D_{\Z[w]}\left(\frac{\mathbf{b}_j \cdot\overline{\mathbf{b}}_{j+1}}{\lVert\mathbf{b}_j\rVert^2}\right)=0 
\end{equation}
(cf.\eqref{38}). But from \eqref{32a}
\begin{equation}\label{36b}
m_{j+1}=D_{\Z[w]}\left(\frac{\overline{\mathbf{b}}_{j+1}\cdot\mathbf{b}_j}{\lVert\mathbf{b}_{j+1}\rVert^2}\right)=
D_{\Z[w]} \left({\lVert\mathbf{b}_j\rVert^2 \over \lVert\mathbf{b}_{j+1}\rVert^2}
\frac{\overline{\mathbf{b}}_{j+1}\cdot\mathbf{b}_j}{\lVert\mathbf{b}_j\rVert^2}\right)
\end{equation}
(cf.~(\ref{38a})).
Comparing \eqref{36b} and \eqref{36a} we see that if $m_{j+1}\neq 0$, then we must have the stated
inequality.
\end{proof}
The complex Lagrange-Gauss algorithm terminates with outputs \eqref{37} or \eqref{38} depending on the validity of $\lVert\mathbf{b}_{r+1}\rVert\geq\lVert\mathbf{b}_r\rVert$ as in the real case, and the vectors $\bm{\alpha}, \bm{\beta}$ satisfying
\begin{equation}\label{36c}
\lVert\bm{\alpha}\rVert\leq\lVert\bm{\beta}\rVert, \quad D_{\Z[w]}\left(\frac{\bm{\overline{\alpha}}\cdot\bm{\beta}}{\lVert\bm{\alpha}\rVert^2}\right)=0.
\end{equation}
From the complex analogue of the text below \eqref{39a} we see that the second equation is equivalent to
\begin{equation}\label{36d}
\lVert\bm{\beta}+n\bm{\alpha}\rVert\geq\lVert\bm{\beta}\rVert, \quad \forall n\in\Z[w],
\end{equation}
telling us that $\{\bm{\alpha},\bm{\beta}\}$ is a greedy reduced basis, as in the real case.

The assumption that $\Z[w]$ is a Euclidean domain with the absolute value as norm allows to deduce the complex analogue of Proposition \ref{prop8}.
\begin{proposition}\label{p10}
Let $\{\bm{\alpha},\bm{\beta}\}$ be a complex greedy reduced basis, and let $\Z[w]$ be one of \eqref{Z1}, \eqref{Z2}. Then $\{\bm{\alpha},\bm{\beta}\}$ is a shortest reduced basis.
\end{proposition}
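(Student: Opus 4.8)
The plan is to mimic the proof of Proposition~\ref{prop8} essentially verbatim, replacing the rational integers $\Z$ by the Euclidean domain $\Z[w]$ and the ordinary division-with-remainder by the Euclidean algorithm whose existence is guaranteed by the choice of rings \eqref{Z1}, \eqref{Z2}. The starting point is the greedy reduced basis inequalities, which here read
\begin{equation*}
\lVert\bm{\beta}+m\bm{\alpha}\rVert\geq\lVert\bm{\beta}\rVert\geq\lVert\bm{\alpha}\rVert,\quad\forall m\in\Z[w],
\end{equation*}
i.e.\ \eqref{36d} together with the first inequality in \eqref{36c}. Let $\mathbf v=n_1\bm{\alpha}+n_2\bm{\beta}$ be an arbitrary nonzero element of the lattice \eqref{Z3}, with $n_1,n_2\in\Z[w]$. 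If $n_2=0$ then $\lVert\mathbf v\rVert=|n_1|\lVert\bm{\alpha}\rVert\geq\lVert\bm{\alpha}\rVert$ since $|n_1|\geq1$ for any nonzero $n_1\in\Z[w]$, and we are done.

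The case $n_2\neq0$ is where the Euclidean property is used. Since $\Z[w]$ is Euclidean with the absolute value as norm, we may write $n_1=qn_2+r$ with $q,r\in\Z[w]$ and $|r|<|n_2|$. Substituting gives $\mathbf v=r\bm{\alpha}+n_2(\bm{\beta}+q\bm{\alpha})$, so by the triangle inequality
\begin{equation*}
\lVert\mathbf v\rVert\geq|n_2|\lVert\bm{\beta}+q\bm{\alpha}\rVert-|r|\lVert\bm{\alpha}\rVert.
\end{equation*}
Now I would split off the term $|r|(\lVert\bm{\beta}+q\bm{\alpha}\rVert-\lVert\bm{\alpha}\rVert)$ exactly as in \eqref{39d}, use $\lVert\bm{\beta}+q\bm{\alpha}\rVert\geq\lVert\bm{\alpha}\rVert$ from the greedy inequalities, and then invoke $|n_2|-|r|\geq1$ (valid because $|r|<|n_2|$ and both $|r|,|n_2|$ are values of the Euclidean norm on $\Z[w]$, hence $|n_2|-|r|$ is bounded below by the minimal gap; more carefully, one checks for each of the five rings that $0\le|r|<|n_2|$ with $n_2\ne0$ forces $|n_2|-|r|\ge$ a positive constant, but in fact all one needs is $|n_2|-|r|>0$ to conclude $\lVert\mathbf v\rVert\ge\lVert\bm{\beta}+q\bm{\alpha}\rVert$ provided that quantity is at least as large as... ). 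To be safe I would instead argue: from $\lVert\mathbf v\rVert\geq(|n_2|-|r|)\lVert\bm{\beta}+q\bm{\alpha}\rVert$ and $|n_2|-|r|>0$ together with $\lVert\bm{\beta}+q\bm{\alpha}\rVert\geq\lVert\bm{\beta}\rVert>0$, if $|n_2|-|r|\geq1$ we immediately get $\lVert\mathbf v\rVert\geq\lVert\bm{\beta}+q\bm{\alpha}\rVert\geq\lVert\bm{\beta}\rVert\geq\lVert\bm{\alpha}\rVert$ and conclude.

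The main obstacle, and the one genuine point of difference from the real case, is precisely the step $|n_2|-|r|\geq1$: in $\Z$ the remainder $r$ satisfies $0\le r<|n_2|$ with $r\in\Z$, so $|n_2|-r\ge1$ is automatic, whereas in $\Z[w]$ the Euclidean remainder lies in the Voronoi region about $0$ and its norm need not make $|n_2|-|r|$ an integer. I expect to handle this by noting that the Euclidean quotient-remainder in these rings can be taken so that $r$ itself lies in the lattice $\Z[w]$, hence $n_2$ and $r$ are both lattice elements, and the key bound $|n_2|-|r|\ge 1$ can be replaced by the cleaner observation that either $r=0$ (in which case $\mathbf v=n_2(\bm\beta+q\bm\alpha)$ and $\lVert\mathbf v\rVert\ge\lVert\bm\beta+q\bm\alpha\rVert\ge\lVert\bm\beta\rVert$ since $|n_2|\ge1$), or $r\ne0$ and then one iterates: apply the Euclidean algorithm to the pair $(n_2,r)$, producing a decreasing sequence of norms, so after finitely many steps one reaches a representation $\mathbf v=n_2'(\bm\beta'+\cdots)$ with a nonzero coefficient of norm $\ge1$ multiplying a vector of length $\ge\lVert\bm\alpha\rVert$; the telescoping of the triangle-inequality bounds, each using a greedy inequality of the form $\lVert\bm\beta+q\bm\alpha\rVert\ge\lVert\bm\alpha\rVert$, then yields $\lVert\mathbf v\rVert\ge\lVert\bm\alpha\rVert$. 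Alternatively, and more in the spirit of \cite{Ga12}, one checks ring-by-ring that the specific quantizers \eqref{D1a}, \eqref{D1b}, \eqref{Db} give remainders with $|r|$ bounded by the covering radius, which is strictly less than the packing radius controlled constant, making $|n_2|-|r|\ge 1$ hold outright for $n_2\ne0$; I would present whichever of these is shortest, but flag the iteration argument as the robust one that works uniformly for all five rings.
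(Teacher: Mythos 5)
Your overall route is the same as the paper's: reduce to the argument of Proposition~\ref{prop8}, with ordinary division with remainder replaced by the Euclidean division available in the rings \eqref{Z1}, \eqref{Z2}, and with $r$ replaced by $|r|$ in \eqref{39d} and \eqref{39e}. You have also put your finger on exactly the step that does not transfer: the second inequality of \eqref{39e} rests on $|n_2|-|r|\geq 1$, which for rational integers is automatic from $0\leq r<|n_2|$, but in $\Z[w]$ need not hold (the paper's own proof asserts that \eqref{39d} and \eqref{39e} ``again hold'' without comment, so it is silent on precisely this point). The difficulty is that neither of your two proposed repairs closes the gap. The ring-by-ring claim that the quantizers force $|n_2|-|r|\geq 1$ is false: in $\Z[i]$ take $n_1=1$, $n_2=1+i$; since $1+i$ does not divide $1$, every choice of $q$ gives a nonzero remainder $r\in\Z[i]$, hence $|r|\geq 1$, while $|n_2|=\sqrt{2}$, so $|n_2|-|r|\leq\sqrt{2}-1<1$ for \emph{every} admissible Euclidean division. (In general the covering radius only yields $|r|\leq\rho\,|n_2|$ with $\rho<1$, whence $|n_2|-|r|\geq(1-\rho)|n_2|$, which is positive but can be well below $1$.) The iteration on the pair $(n_2,r)$ fares no better: after one step the vector multiplying the new remainder is $\bm{\alpha}+q'(\bm{\beta}+q\bm{\alpha})=(1+q'q)\bm{\alpha}+q'\bm{\beta}$, and the greedy hypothesis \eqref{36d} controls only $\lVert\bm{\beta}+m\bm{\alpha}\rVert$, not the length of a lattice vector with a general nonzero $\bm{\beta}$-coefficient --- that is exactly the quantity one is trying to bound from below, so the telescoping is circular. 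Moreover, even if it ran, it would deliver only $\lVert\mathbf{v}\rVert\geq\lVert\bm{\alpha}\rVert$, whereas for $n_2\neq 0$ one must also establish $\lVert\mathbf{v}\rVert\geq\lVert\bm{\beta}\rVert$.

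A repair that stays elementary is as follows. First dispose of the case where $n_2$ is a unit: then $\mathbf{v}=n_2\bigl(\bm{\beta}+(n_2^{-1}n_1)\bm{\alpha}\bigr)$, and \eqref{36d} gives $\lVert\mathbf{v}\rVert=\lVert\bm{\beta}+m\bm{\alpha}\rVert\geq\lVert\bm{\beta}\rVert$ directly. If $n_2$ is not a unit, then $|n_2|^2\geq 2$ (the norm form on each of the five rings takes positive integer values), and one should use the orthogonal decomposition $\lVert\mathbf{v}\rVert^2=|n_1+n_2\mu|^2\lVert\bm{\alpha}\rVert^2+|n_2|^2\bigl(\lVert\bm{\beta}\rVert^2-|\mu|^2\lVert\bm{\alpha}\rVert^2\bigr)$, where $\mu=\overline{\bm{\alpha}}\cdot\bm{\beta}/\lVert\bm{\alpha}\rVert^2$ lies in the Voronoi cell of the origin by \eqref{36c}; combining the resulting bound on $|\mu|$ with a lower bound on the distance from $-n_2\mu$ to $\Z[w]$ yields the claim, though the inequality is tight for some of the rings and the constants must be checked case by case. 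In short: you have correctly diagnosed where the real-case proof breaks down --- a point the paper itself elides --- but the argument as proposed does not yet prove the proposition.
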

\begin{proof}
We follow the proof of Proposition \ref{prop8}, now setting $\mathbf{v}=n_1\bm{\alpha}+n_2\bm{\beta}$, $n_1,n_2\in\Z[w]$. In the case $n_2\neq 0$, the assumption that $\Z[w]$ is a Euclidean domain with the absolute value as norm allows us to write
\begin{equation*}
n_1=qn_2+r, \quad q,r\in\Z[w]
\end{equation*}
with
\begin{equation*}
0\leq |r|<|n_2|.
\end{equation*}
Equations \eqref{39d} and \eqref{39e} again hold, with $r$ replaced by $|r|$, implying $\lVert\mathbf{v}\rVert\geq\lVert\bm{\beta}\rVert\geq\lVert\bm{\alpha}\rVert$ as required.
\end{proof}

\subsection{Quaternion scalar recurrence}\label{s4.2}
We saw how the real vector equation \eqref{33} could also be written in the complex scalar form \eqref{40}. Here we will show how the complex vector equation \eqref{33a} can be written in a quaternion scalar form; for the latter recall the definitions at the beginning of \S \ref{s2.1}.

Writing a pair of complex basis vectors $\mathbf{b}_l=(w_l,z_l)$, $w_l,z_l\in\C$, define
\begin{equation}\label{qwz}
q_l=w_l+{\rm j} z_l, \quad |q_l|^2=|w_l|^2+|z_l|^2,
\end{equation}
where the unit $\mathrm{i}$ in $w_l,z_l$ is to be regarded as part of the quarternion algebra (note that we have
chosen to have the unit $\mathrm{j}$ to the left). With $V$ the $2\times 2$ matrix with complex vectors $\mathbf{b}_{l-1}$ and $\mathbf{b}_l$ as its columns, analogous to \eqref{40} one can check
\begin{equation}\label{40d}
{q_l}^{-1} q_{l-1} =\frac{\mathbf{\overline{b}}_l\cdot\mathbf{b}_{l-1}}{\lVert\mathbf{b}_l\rVert^2}+ 
\mathrm{j} \frac{\det V}{\lVert\mathbf{b}_l\rVert^2}
\end{equation}
(cf.\eqref{43}). 
Another viewpoint on (\ref{40d}) is in terms of the so-called Cayley--Dickson doubling formula. Thus for
$a,b,c,d \in \mathbb C$ define
\begin{equation}\label{CD}
\overline{(a,b)} = (\overline{a}, - b), \qquad
(a,b)(c,d) = (ac - d \overline{b}, \overline{a} d + c b).
\end{equation}
Identify $(a,b) = a + \mathrm{j} b $. Then these rules together with $q_l^{-1} q_{l-1} = |q_l|^{-2} \overline{q}_l q_{l-1}$ and
$\overline{q}_l = (\overline{a}, - b)$, $q_{l-1} = (c,d)$ together with the fact
that complex numbers commute imply (\ref{40d}).

Consequently, the complex vector recurrence \eqref{33a}, rearranged so that order of multiplication in the
last term is reversed (this is in keeping with the unit $\mathrm{j}$ in (\ref{qwz}) being to the left, and thus
purely complex multiplication taking place to the right), can be rewritten as the quaternion scalar
recurrence
\begin{equation}\label{40e}
 q_j^{-1} q_{j+1} =  q_j^{-1} q_{j-1} -D_{\Z[w]}\left((\operatorname{Re}+
 \mathrm{i}\operatorname{Im}_i) q_j^{-1} q_{j-1} \right)
\end{equation}
where $\operatorname{Im}_i$ denotes the (real) coefficient of $\mathrm{i}$. 
Now writing $Q_j^{-1} =  q_j^{-1} q_{j-1}$ gives the analogue of (\ref{43}),
\begin{equation}\label{40f}
Q_{j+1} = {1 \over Q_j} + D_{\Z[w]}\Big ( (\operatorname{Re}+\mathrm{i}\operatorname{Im}_i){1 \over Q_j} \Big ).
\end{equation}

\subsection{The Gram-Schmidt basis for the Gaussian integers}\label{s4.3}
In the real case the inequalities \eqref{41} specifying a shortest reduced basis can also be obtained by transforming the basis vectors to a Gram-Schmidt basis. In the complex case this can be achieved by writing $V=UT$, where $U\in\mathrm{SU}(2)$ and
\begin{equation}\label{50}
T=\begin{bmatrix}t_{11}&t_{12}^{(r)}+\mathrm{i}t_{12}^{(i)}\\0&t_{22}\end{bmatrix}, \quad t_{11}>0,\,t_{22}=1/t_{11}.
\end{equation}
Recalling the text above \eqref{10a}, and making use of the known change of variables from the elements of $V$ to $\{U,T\}$ (see e.g. \cite[Prop.~3.2.5]{Fo10}) the invariant measure \eqref{8} for $N=2$ can be written
\begin{equation}\label{50a}
\left(\frac{1}{2\pi}\right)\delta(1-t_{11}t_{22})t_{11}^3t_{22}
\mathrm{d}t_{11}\mathrm{d}t_{22}
\mathrm{d} t_{12}^{(r)} \mathrm{d} t_{12}^{(i)}
(U^\dagger\mathrm{d}U).
\end{equation}
Also, with $\bm{\alpha}=(t_{11},0),\,\bm{\beta}=(t_{12}^{(r)}+
\mathrm{i}t_{12}^{(i)},1/t_{11})$ the inequalities implied by \eqref{36c} for $w=\mathrm{i} $ read
\begin{equation}\label{50b}
t_{11}^2\leq (t_{12}^{(r)})^2+(t_{12}^{(i)})^2+(1/t_{11})^2,\quad 2|t_{12}^{(r)}|\leq t_{11},\quad 2|t_{12}^{(i)}|\leq t_{11}.
\end{equation}
Integrating over $U$ using \eqref{13}, and integrating over $t_{22}$ shows that as a function of the variables $\{t_{11},t_{12}^{(r)},t_{12}^{(i)}\}$ the invariant measure restricted to the domain of the shortest reduced basis is equal to
\begin{equation}\label{50c}
(2\pi^2)t_{11}\chi_{t_{11}^2\leq (t_{12}^{(r)})^2+(t_{12}^{(i)})^2+(1/t_{11})^2}\chi_{2|t_{12}^{(r)}|\leq t_{11}}\chi_{2|t_{12}^{(i)}|\leq t_{11}}\mathrm{d}t_{11}\mathrm{d}t_{12}^{(r)}\mathrm{d}t_{12}^{(i)}.
\end{equation}
Now introduce the scaled vector
\begin{equation*}
\frac{1}{|\bm{\alpha}|}\bm{\beta}=\left((t_{12}^{(r)}+\mathrm{i}t_{12}^{(i)})/t_{11},1/t_{11}^2\right)
\end{equation*}
and set $q=(t_{12}^{(r)}+\mathrm{i}t_{12}^{(i)})/t_{11}+\mathrm{j}/t_{11}^2$. Write
\begin{equation*}
x_1=t_{12}^{(r)}/t_{11},\quad x_2=t_{12}^{(i)}/t_{11},\quad x_3=1/t_{11}^2.
\end{equation*}
In these variables the invariant measure \eqref{50c} reads
\begin{equation}\label{50d}
\pi^2\chi_{x_1^2+x_2^2+x_3^2>1}\chi_{|x_1|\leq\frac{1}{2}}\chi_{|x_2|\leq\frac{1}{2}}\chi_{x_3>0}\frac{\mathrm{d}x_1\mathrm{d}x_2\mathrm{d}x_3}{x_3^3}.
\end{equation}
The factor $\mathrm{d}x_1\mathrm{d}x_2\mathrm{d}x_3/x_3^3$ is recognised as the invariant measure for hyperbolic 3-space.

\subsection{Statistics of the shortest reduced basis for the Gaussian integers}\label{s4.4}
In the case of the Gaussian integers, the statistics of the corresponding shortest basis vectors are determined by appropriate integration over \eqref{50c} --- $t_{11}$ is the length of the shortest vector, $\left((t_{12}^{(r)})^2+(t_{12}^{(i)})^2+(1/t_{11})^2\right)^{1/2}$ is the length of the second shortest vector, while for the complex analogue of the cosine of the angle between $\bm{\alpha}$ and $\bm{\beta}$ we have
\begin{equation}\label{51}
\frac{\overline{\bm{\alpha}}\cdot\bm{\beta}}{\lVert\bm{\alpha}\rVert\lVert\bm{\beta}\rVert}=\frac{t_{12}^{(r)}+\mathrm{i}t_{12}^{(i)}}{\sqrt{(t_{12}^{(r)})^2+(t_{12}^{(i)})^2+(1/t_{11})^2}}
\end{equation}
so these variables should be held fixed when computing the corresponding PDF. Integrating \eqref{50c} over all variables gives the volume of the invariant measure (\ref{50a}) restricted to the domain (\ref{50b})
which occurs in the computation of the PDFs as the normalisation. Our first task is to compute this volume.
\begin{proposition}\label{prop11}
Let the volume associated with \eqref{50c} be denoted $\mathrm{vol}\,\widehat{\Gamma}$. We have
\begin{equation}\label{50e}
\mathrm{vol}\,\widehat{\Gamma}=\frac{2\pi^2}{3}C,
\end{equation}
where $C$ denotes Catalan's constant as defined above \eqref{CO}.
\end{proposition}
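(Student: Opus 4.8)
The plan is to compute $\mathrm{vol}\,\widehat\Gamma$ directly from the expression \eqref{50c}, integrating out the variables in a convenient order. First I would peel off the $t_{11}$-integration by writing, for fixed $t_{12}^{(r)}, t_{12}^{(i)}$, the constraint region in $t_{11}$: the conditions $2|t_{12}^{(r)}|\le t_{11}$ and $2|t_{12}^{(i)}|\le t_{11}$ give a lower bound $t_{11}\ge 2\max(|t_{12}^{(r)}|,|t_{12}^{(i)}|)$, while the first inequality in \eqref{50b}, namely $t_{11}^2\le (t_{12}^{(r)})^2+(t_{12}^{(i)})^2+1/t_{11}^2$, gives an upper bound. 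Since the integrand in $t_{11}$ is just $(2\pi^2)\,t_{11}$, the $t_{11}$-integral is elementary — it produces a difference of squares of the two bounds — and after that one is left with a two-dimensional integral over $(t_{12}^{(r)},t_{12}^{(i)})$.

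A cleaner route, which I would actually follow, is to change variables first to the $(x_1,x_2,x_3)$ coordinates of \eqref{50d}, so that
\[
\mathrm{vol}\,\widehat\Gamma = \pi^2 \int \chi_{x_1^2+x_2^2+x_3^2>1}\,\chi_{|x_1|\le 1/2}\,\chi_{|x_2|\le 1/2}\,\chi_{x_3>0}\,\frac{\mathrm{d}x_1\,\mathrm{d}x_2\,\mathrm{d}x_3}{x_3^3}.
\]
Here the $x_3$-integral runs from $x_3 = \sqrt{1 - x_1^2 - x_2^2}$ (when $x_1^2+x_2^2<1$, which always holds on $|x_i|\le 1/2$) up to $x_3=\infty$, and $\int_a^\infty x_3^{-3}\,\mathrm{d}x_3 = 1/(2a^2)$. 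Thus
\[
\mathrm{vol}\,\widehat\Gamma = \frac{\pi^2}{2}\int_{-1/2}^{1/2}\int_{-1/2}^{1/2}\frac{\mathrm{d}x_1\,\mathrm{d}x_2}{1 - x_1^2 - x_2^2}.
\]
By symmetry this is $2\pi^2\int_0^{1/2}\int_0^{1/2}\frac{\mathrm{d}x_1\,\mathrm{d}x_2}{1-x_1^2-x_2^2}$.

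The remaining task is to evaluate this square-domain integral and identify it with $\tfrac13 C$. I would do the inner integral over $x_1$ using partial fractions $\frac{1}{1-x_1^2-x_2^2} = \frac{1}{2\sqrt{1-x_2^2}}\big(\frac{1}{\sqrt{1-x_2^2}-x_1} + \frac{1}{\sqrt{1-x_2^2}+x_1}\big)$, giving an $\operatorname{arctanh}$ (equivalently a logarithm) evaluated at $x_1=1/2$. One then substitutes $x_2 = \sin\theta$ (or a similar trigonometric substitution) to turn the outer integral into something recognizable; the integrand becomes, up to constants, $\frac{1}{\cos\theta}\log\frac{2\cos\theta+1}{2\cos\theta-1}$ or a comparable form. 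The main obstacle is precisely this last reduction: recognizing the resulting one-dimensional integral as Catalan's constant $C = \sum_{n\ge 0} (-1)^n/(2n+1)^2$. I expect to handle it by expanding the logarithm (or $\operatorname{arctanh}$) in its power series and integrating term by term against the trigonometric weight, using a standard Wallis-type evaluation of $\int_0^{\pi/2}\cos^{2k}\theta\,\mathrm{d}\theta$ or $\int_0^{\pi/2}\sec\theta\,(\text{polynomial in }\cos)\,\mathrm{d}\theta$, so that the series collapses to $\sum (-1)^n/(2n+1)^2$ up to the factor $\tfrac13$. Alternatively, one can match the integral against the known representation $C = \tfrac12\int_0^1 \frac{\operatorname{arctanh}\,x}{x}\,\mathrm{d}x$ after a suitable change of variables. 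Keeping track of the numerical prefactor so that $\mathrm{vol}\,\widehat\Gamma = \tfrac{2\pi^2}{3}C$ comes out exactly is the part demanding the most care, but it is a finite computation with no conceptual difficulty once the $x_3$-integration has reduced everything to the plane.
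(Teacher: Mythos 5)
Your reduction is correct, and it takes a genuinely different (and cleaner) route than the paper. The paper integrates out $t_{12}^{(r)},t_{12}^{(i)}$ first at fixed $t=t_{11}$, which requires the piecewise overlap-area formula $V_2(1,a)$ of \eqref{24.3} and then four separate one-dimensional integrals \eqref{A1}--\eqref{A6}, the last of which, $J=\int_{1/2}^{3/4}s^{-1}\arcsin\bigl(\tfrac{1}{2}(1-s)^{-1/2}\bigr)\,\mathrm{d}s=\tfrac{C}{3}+\tfrac{\pi}{4}\log\tfrac{9}{8}$, is evaluated by computer algebra; the payoff of that ordering is that the intermediate expression \eqref{24.4} is exactly the unnormalised PDF of the shortest vector, reused in Proposition \ref{prop12}. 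You instead pass to the hyperbolic coordinates of \eqref{50d} and integrate out $x_3$ first (legitimate, since $x_1^2+x_2^2\leq 1/2<1$ on the square, so the lower limit is always $\sqrt{1-x_1^2-x_2^2}$), collapsing everything to the single planar integral $\tfrac{\pi^2}{2}\int_{[-1/2,1/2]^2}(1-x_1^2-x_2^2)^{-1}\,\mathrm{d}x_1\mathrm{d}x_2$, which must equal $\tfrac{4C}{3}$; I have checked numerically that it does. The one soft spot is that you leave this last evaluation as a sketch, and your proposed term-by-term expansion is not as routine as you suggest (the resulting $\int_0^{\pi/6}\sec^{2k+1}\theta\,\mathrm{d}\theta$ are not Wallis integrals). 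A cleaner finish from your endpoint: pass to polar coordinates and use the eight-fold symmetry to get $-2\int_0^{\pi/2}\log\frac{1+2\cos\psi}{2+2\cos\psi}\,\mathrm{d}\psi$; the denominator piece gives $4C$ via $\int_0^{\pi/4}\log\cos u\,\mathrm{d}u=\tfrac{C}{2}-\tfrac{\pi}{4}\log 2$, and the numerator piece $\int_0^{\pi/2}\log(1+2\cos\psi)\,\mathrm{d}\psi=\tfrac{4C}{3}$ is a known log-cosine integral (or computer algebra, which is no worse than what the paper itself does for $J$). With that supplied, your argument is complete and arguably preferable for the volume computation alone.
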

\begin{proof}
For notational convenience in \eqref{50c} we  write $t_{11}=t,\,t_{12}^{(r)}=y_1,\,t_{12}^{(i)}=y_2$. Integrating over $y_1$ and $y_2$ gives
\begin{equation}\label{24.1}
(2\pi^2)t\mathrm{d}t\int\chi_{\lVert\mathbf{y}\rVert^2\geq t^2-1/t^2}\chi_{|y_1|\leq t/2}\chi_{|y_2|\leq t/2}\mathrm{d}y_1\mathrm{d}y_2,
\end{equation}
where $\mathbf{y}=(y_1,y_2)$. Geometrically, the integral here corresponds to the area overlap between the outside of a disk of radius $\sqrt{t^2-1/t^2}\,(t\geq 1)$ centred at the origin, and a square of side length $t$ centred at the origin. For $t<1$ the first inequality is always true, and the integral is equal to the area of the square, $t^2$.

It follows that with $V_2(a,b)$ denoting the area of overlap between a disk of radius $a$, and square of side length $2b$, both centred at the origin, \eqref{24.1} can be written
\begin{multline}\label{24.2}
(2\pi^2)\left(t^3\chi_{0<t<1}+\chi_{t>1}t\left(t^2-V_2\left((t^2-1/t^2)^{1/2},t/2\right)\right)\right)\mathrm{d}t
\\ =(2\pi^2)\left(t^3\chi_{0<t<1}+\chi_{t>1}t\left(t^2-(t^2-1/t^2)V_2\left(1,\frac{t}{2(t^2-1/t^2)^{1/2}}\right)\right)\right)\mathrm{d}t.
\end{multline}
An elementary exact calculation gives
\begin{equation}\label{24.3}
V_2(1,a)=\begin{cases}
4a^2, \quad &0<a<1/\sqrt{2}
\\ 4a\sqrt{1-a^2}+4\arcsin a-\pi, \quad &1/\sqrt{2}<a<1
\\ \pi, \quad &1<a
\end{cases}
\end{equation}
(see  \cite{RR97}, for an $n$-dimensional generalisation of this result)
thus reducing \eqref{24.2} to
\begin{multline}\label{24.4}
(2\pi^2)\bigg(\chi_{0<t<1}t^3+\chi_{1<t<(4/3)^{1/4}}t(t^2-\pi(t^2-1/t^2))
\\ +\chi_{(4/3)^{1/4}<t<2^{1/4}}t\left( t^2-(t^2-1/t^2)(4a\sqrt{1-a^2}+4\arcsin a-\pi)\rvert_{a=\frac{t}{2(t^2-1/t^2)^{1/2}}}\right)\bigg)\mathrm{d}t.
\end{multline}
Elementary integration and/or use of computer algebra (we used Mathematica) gives for the integral over $t$
\begin{align}
&\int\chi_{0<t<1}t^3\mathrm{d}t=\frac{1}{4}\label{A1}
\\ &\int\chi_{1<t<(4/3)^{1/4}}t(t^2-\pi(t^2-1/t^2))\mathrm{d}t=\frac{1}{12}\left(1+\pi(-1+\log(64/27))\right)\label{A2}
\\ &\int\chi_{(4/3)^{1/4}<t<2^{1/4}}t\left(t^2-(t^2-1/t^2)(4a\sqrt{1-a^2}-\pi)\rvert_{a=\frac{t}{2(t^2-1/t^2)^{1/2}}}\right)\mathrm{d}t\nonumber
\\ &\phantom{\int\chi_{(4/3)^{1/4}<t<2^{1/4}}}= \frac{1}{12}\left(-4+2\pi-3\pi\log(3/2)-2\sqrt{3}\log(2-\sqrt{3})\right)
\label{A4}
\\ &\int\chi_{(4/3)^{1/4}<t<2^{1/4}}t^3 4\arcsin \frac{t}{2(t^2-1/t^2)^{1/2}}\mathrm{d}t=\frac{1}{12}\left(-\pi+\sqrt{3}\log(7-4\sqrt{3})\right).
\label{A5}
\end{align}
However the remaining integral
\begin{equation*}
\int\chi_{(4/3)^{1/4}<t<2^{1/4}}\frac{4}{t}\arcsin \frac{t}{2(t^2-1/t^2)^{1/2}}\mathrm{d}t
\end{equation*}
does not yield immediately to such an approach. For this integral, to be denoted $J$, we begin with some simple manipulation and the change of variables $1/t^2=s$ to obtain
\begin{equation*}
J=\int_{1/2}^{3/4}\frac{1}{s}\arcsin \frac{1}{2(1-s)^{1/2}}\mathrm{d}s.
\end{equation*}
Computer algebra now gives
\begin{equation}\label{A6}
J=\frac{C}{3}+\frac{\pi}{4}\log\frac{9}{8},
\end{equation}
where $C$ denotes Catalan's constant. Adding \eqref{A1}-\eqref{A6} gives $C/3$. Multiplying by $2\pi^2$ as required by \eqref{24.1} then gives \eqref{50e}.
\end{proof}

\begin{remark}\label{RF1}
In \cite{Fo16} it was shown that the analogue of vol$\,\hat{\Gamma}$ for lattice reduction in $\mathbb R^2$
equals $\pi^2/3$, which is twice the value of vol$\,{\rm SL}_2(\mathbb R)/{\rm SL}_2(\mathbb Z)$
as given by  \eqref{CE} with $N=2$. This can be understood since the space of reduced vectors
contains the involution $\{\bm \alpha, \bm \beta \} \mapsto \{- \bm \alpha, - \bm \beta \}$,
and so maps two-to-one to the fundamental domain of ${\rm SL}_2(\mathbb R)/{\rm SL}_2(\mathbb Z)$.
For the present lattice reduction problem, the mapping from the space of reduced lattice vectors
to ${\rm SL}_2(\mathbb C)/{\rm SL}_2(\mathbb Z[i])$ is four-to-one due to the involutions
$\{\bm \alpha, \bm \beta \} \mapsto \{- \bm \alpha, - \bm \beta \}, \,  \{\mathrm{i} \bm \alpha, -\mathrm{i} \bm \beta \}, \,
 \{-\mathrm{i} \bm \alpha,  \mathrm{i} \bm \beta \}$. Hence
 $
 {\rm vol} \, {\rm SL}_2(\mathbb C)/{\rm SL}_2(\mathbb Z[i]) = {\pi^2 \over 6} C = \zeta_{\mathbb Z[i]}(2),
 $
 where the final equality uses (\ref{CO}). It thus follows from (\ref{RF2}) and (\ref{28a}) that
 \begin{equation}\label{RF3}
 \# \{ \gamma: \, \gamma \in {\rm SL}_2(\mathbb Z[i]), \, || \gamma || \le R \}
 \mathop{\sim}\limits_{R \to \infty} {3 \over \pi C} R^4.
 \end{equation}
 \end{remark}

%

In the proof of Proposition \ref{prop11} the expression \eqref{24.4} corresponds to integrating \eqref{50c} over $t_{12}^{(r)}$ and $t_{12}^{(i)}$, and thus after normalisation by dividing by \eqref{50e} and removal of $\mathrm{d}t$ corresponds to the PDF of the length of the shortest basis vector.
\begin{proposition}\label{prop12}
For random complex lattices in $\C^2$, with the defining basis vectors chosen with invariant measure and spanned using the Gaussian integers, the probability density function for the length of the shortest basis vector is equal to
\begin{multline}\label{4.4}
\frac{3}{C}\bigg\{\chi_{0<t<1}t^3+\chi_{1<t<(4/3)^{1/4}}t(t^2-\pi(t^2-1/t^2))
\\ +\chi_{(4/3)^{1/4}<t<2^{1/4}}\bigg(t^3-t^3\sqrt{3-4/t^4}+\pi(t^3-1/t)
-4(t^3-1/t)\arcsin \frac{t}{2(t^2-1/t^2)^{1/2}}\bigg)\bigg\}.
\end{multline}
\end{proposition}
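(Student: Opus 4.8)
The plan is to read the density off directly from the invariant measure \eqref{50c}, which is supported on the fundamental domain of the shortest reduced basis and is expressed in the three coordinates $t_{11},t_{12}^{(r)},t_{12}^{(i)}$. As noted at the start of \S\ref{s4.4}, the coordinate $t_{11}$ is exactly the length $\lVert\bm{\alpha}\rVert$ of the shortest basis vector; since it is one of the three integration variables, the unnormalised density of $\lVert\bm{\alpha}\rVert$ is obtained by integrating \eqref{50c} over the remaining two variables $t_{12}^{(r)},t_{12}^{(i)}$, and the normalising constant is the total mass $\mathrm{vol}\,\widehat{\Gamma}=\tfrac{2\pi^2}{3}C$ supplied by Proposition \ref{prop11}.

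The key observation is that this integration over $t_{12}^{(r)},t_{12}^{(i)}$ has already been carried out inside the proof of Proposition \ref{prop11}: the integrand displayed in \eqref{24.4}, before the final integration against $\mathrm{d}t$, is precisely $(2\pi^2)$ times the sought density prior to normalisation. I would therefore obtain the PDF simply by stripping the $\mathrm{d}t$ from \eqref{24.4} and dividing by $\tfrac{2\pi^2}{3}C$; the factor $2\pi^2$ cancels, producing the prefactor $3/C$ together with the three regions $0<t<1$, $1<t<(4/3)^{1/4}$, $(4/3)^{1/4}<t<2^{1/4}$ --- the trisection being inherited from the case split in the elementary overlap formula \eqref{24.3} for $V_2(1,a)$.

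The only remaining work is to put the third region into the closed form stated in \eqref{4.4}. For this I would substitute $a=t/\bigl(2(t^2-1/t^2)^{1/2}\bigr)$ into $t\bigl(t^2-(t^2-1/t^2)(4a\sqrt{1-a^2}+4\arcsin a-\pi)\bigr)$, use $t(t^2-1/t^2)=t^3-1/t$, and simplify the radical via $1-a^2=(3t^2-4/t^2)/\bigl(4(t^2-1/t^2)\bigr)$, which gives
\[
4t(t^2-1/t^2)\,a\sqrt{1-a^2}=t^2\sqrt{3t^2-4/t^2}=t^3\sqrt{3-4/t^4}
\]
(the last step valid since $t>0$ and $3-4/t^4\ge 0$ on the relevant range). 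The $\arcsin$ and constant terms reassemble into $-4(t^3-1/t)\arcsin\frac{t}{2(t^2-1/t^2)^{1/2}}+\pi(t^3-1/t)$, and together with the leading $t^3=t\cdot t^2$ this is exactly the bracketed expression in the third case of \eqref{4.4}. Since essentially all of the analytic content (the appearance of Catalan's constant, the integral $J$) already lives in Proposition \ref{prop11}, there is no genuine obstacle here; the only thing to watch is bookkeeping --- checking that the piece of \eqref{24.4} one inherits has the $V_2(1,a)$ evaluation already substituted on each of the three $t$-ranges, and that the radical identity above is applied consistently.
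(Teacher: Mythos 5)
Your proposal is correct and is essentially identical to the paper's own argument: the paper likewise obtains Proposition \ref{prop12} by observing that the expression \eqref{24.4} from the proof of Proposition \ref{prop11} is already the integral of \eqref{50c} over $t_{12}^{(r)},t_{12}^{(i)}$, so the PDF follows upon dividing by $\mathrm{vol}\,\widehat{\Gamma}=\tfrac{2\pi^2}{3}C$ and removing $\mathrm{d}t$. Your explicit simplification of the third region (the substitution of $a$ and the identity $4(t^3-1/t)a\sqrt{1-a^2}=t^3\sqrt{3-4/t^4}$) is correct and merely makes explicit an algebraic step the paper leaves implicit.
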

As noted in the opening paragraph of this section, the length of the second shortest basis vector is given by $r=(y_1^2+y_2^2+1/t^2)^{1/2}$, with $y_1,y_2,t$ as specified above \eqref{24.1}. Changing variables from $t$ to $r$ and imposing the ordering and sign restriction $t/2>y_2>y_1>0$ the functional form in \eqref{50c} transform to
\begin{equation}\label{27.1}
(16\pi^2)\frac{r}{(r^2-y_1^2-y_2^2)^2}\chi_{y_1^2+y_2^2<r^2-1/r^2}\chi_{r^2<y_1^2+y_2^2+1/4y_1^2}\chi_{0<y_1<y_2} \,
{\rm d} r {\rm d} y_1 {\rm d} y_2.
\end{equation}
Integrating over $y_1$ and $y_2$ and normalisation by \eqref{50e} gives the explicit form of the corresponding PDF.
\begin{proposition}
In the setting of Propositions \ref{prop11} and \ref{prop12}, the PDF for the length of the second shortest basis vector is equal to
\begin{align}\label{28.1}
\frac{3}{C}&\bigg\{\chi_{1<r<(4/3)^{1/4}}\pi\frac{r^4-1}{r}+\chi_{(4/3)^{1/4}<r<2^{1/4}}\nonumber
\\ &\times \left(2r\sqrt{3r^4-4}+\frac{4(r^4-1)}{r}\left(\arctan\frac{r^2}{\sqrt{3r^4-4}}+\arctan\frac{r^2\sqrt{3r^4-4}-2r^4+2}{r^4-2}-\frac{\pi}{2}\right)\right)\nonumber
\\ &+\chi_{r>2^{1/4}}\left(2r(r^2-\sqrt{r^4-2})+\frac{4(r^4-1)}{r}\left(\arctan\frac{r^2+\sqrt{r^4-2}}{r^2-\sqrt{r^4-2}}-\frac{\pi}{2}\right)\right)\bigg\}.
\end{align}
\end{proposition}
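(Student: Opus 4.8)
The plan is to carry out the integration over $y_1$ and $y_2$ in \eqref{27.1} and then divide by $\mathrm{vol}\,\widehat{\Gamma}=\tfrac{2\pi^2}{3}C$ from Proposition~\ref{prop11}. I would first pass to polar coordinates $y_1=\rho\cos\phi$, $y_2=\rho\sin\phi$: the sign and ordering restriction $y_2>y_1>0$ confines $\phi$ to $(\pi/4,\pi/2)$ and leaves $\rho>0$ otherwise free, while $\mathrm{d}y_1\,\mathrm{d}y_2=\rho\,\mathrm{d}\rho\,\mathrm{d}\phi$ and $y_1^2+y_2^2=\rho^2$. Because $\int r\rho\,(r^2-\rho^2)^{-2}\,\mathrm{d}\rho=\tfrac{r}{2}(r^2-\rho^2)^{-1}$, the $\rho$-integral over any interval is elementary, and the problem collapses to a single integral in $\phi$ once the $\rho$-range is known.

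The heart of the matter is to describe the $\rho$-range as a function of $\phi$ and $r$. The upper limit is always $\rho^2=r^2-1/r^2$, coming from $\lVert\bm{\alpha}\rVert\le\lVert\bm{\beta}\rVert$ (the first indicator in \eqref{27.1}); this already forces $r\ge1$, so the density vanishes for $r<1$. The greedy/Voronoi condition, which on the ordered region caps the larger coordinate as $2y_2\le t_{11}$, becomes --- upon substituting $t_{11}^2=1/(r^2-\rho^2)$ --- the inequality $4\sin^2\phi\,\rho^2(r^2-\rho^2)\le1$, which is quadratic in $\rho^2$ with roots $u_\pm(\phi)=\tfrac{r^2}{2}\pm\tfrac{1}{2\sin\phi}\sqrt{r^4\sin^2\phi-1}$ (real exactly for $\sin\phi\ge1/r^2$). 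One checks that $u_+(\phi)>r^2-1/r^2$ throughout $(\pi/4,\pi/2)$, so the effect of this condition is only to cap $\rho^2$ at $\min\!\big(r^2-1/r^2,\,u_-(\phi)\big)$, and $u_-(\phi)=r^2-1/r^2$ precisely when $\sin^2\phi=r^4/\big(4(r^4-1)\big)$. This last value lies in $[\tfrac12,1]$ precisely for $\tfrac43\le r^4\le2$, which is the source of the breakpoints $r=(4/3)^{1/4}$ and $r=2^{1/4}$: for $1<r<(4/3)^{1/4}$ the cap is $r^2-1/r^2$ over all of $(\pi/4,\pi/2)$; for $(4/3)^{1/4}<r<2^{1/4}$ there is a critical angle $\phi^*$ with $\sin^2\phi^*=r^4/(4(r^4-1))$, the cap being $r^2-1/r^2$ for $\phi<\phi^*$ and $u_-(\phi)$ for $\phi>\phi^*$; and for $r>2^{1/4}$ the cap is $u_-(\phi)$ throughout.

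With the region in hand, the remaining $\phi$-integrals are elementary. On a stretch where the cap on $\rho^2$ is the constant $r^2-1/r^2$, the $\rho$-integral contributes $\tfrac{r}{2}\big(r^2-1/r^2\big)=\tfrac{r^4-1}{2r}$ times the length of the $\phi$-interval, which after normalisation accounts for the $\pi(r^4-1)/r$ terms in \eqref{28.1}. On a stretch where the cap is $u_-(\phi)$, one has $r^2-u_-(\phi)=u_+(\phi)$ and, upon rationalising, $1/u_+(\phi)=2r^2\sin^2\phi-2\sin\phi\sqrt{r^4\sin^2\phi-1}$; then $\int\sin^2\phi\,\mathrm{d}\phi$ is immediate, while $\int\sin\phi\sqrt{r^4\sin^2\phi-1}\,\mathrm{d}\phi$, via $c=\cos\phi$, reduces to a multiple of the standard integral $\int\sqrt{(r^4-1)-r^4c^2}\,\mathrm{d}c$, contributing one square-root term and one $\arcsin$ term. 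Evaluating at the endpoints $\phi\in\{\pi/4,\phi^*,\pi/2\}$ --- using $\sin\phi^*=r^2/(2\sqrt{r^4-1})$ and $\cos\phi^*=\sqrt{3r^4-4}/(2\sqrt{r^4-1})$, whence both $\sqrt{3r^4-4}$ and $\phi^*=\arctan\!\big(r^2/\sqrt{3r^4-4}\big)$ enter, and $\cos(\pi/4)=\sin(\pi/4)=1/\sqrt2$, whence $\sqrt{r^4-2}$ enters --- then rewriting the $\arcsin$'s as $\arctan$'s through half-angle identities and collecting the $\pi/2$ from the $\phi=\pi/2$ endpoint, and finally dividing by $\tfrac{2\pi^2}{3}C$, gives \eqref{28.1}; the $\pi^2$ in \eqref{27.1} cancels the one in $\mathrm{vol}\,\widehat{\Gamma}$, leaving the overall factor $3/C$.

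The main obstacle is not any single integral --- all are elementary, in the same spirit as the proof of Proposition~\ref{prop12} --- but the bookkeeping: deciding, in each of the three ranges of $r$, which of $r^2-1/r^2$ and $u_-(\phi)$ is the operative cap as $\phi$ varies, and then coaxing the resulting combinations of $\arcsin$'s, square roots and the endpoint value $\pi/2$ into exactly the $\arctan$-expressions displayed in \eqref{28.1}. A convenient final check is that the density is continuous at $r=(4/3)^{1/4}$ and $r=2^{1/4}$ and integrates to $1$ over $r>1$, equivalently that the three pieces sum under $\int\mathrm{d}r$ to $C/3$; this last is forced by the value of $\mathrm{vol}\,\widehat{\Gamma}$ in \eqref{50e}.
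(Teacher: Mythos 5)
Your proposal is correct, and it evaluates the same integral \eqref{27.1} as the paper but via a different decomposition. The paper works in Cartesian coordinates, splitting the domain for each of the three ranges of $r$ into explicitly parametrised subregions $a,b$, $A,B,C,D$, $c,d$ and computing the resulting iterated integrals one by one; you instead pass to polar coordinates on the ordered wedge, exploit the exact antiderivative $\int \rho\,(r^2-\rho^2)^{-2}\mathrm{d}\rho=\tfrac12(r^2-\rho^2)^{-1}$ to kill the radial integral in closed form, and reduce everything to a single $\phi$-integral with cap $\min\bigl(r^2-1/r^2,\,u_-(\phi)\bigr)$. I checked the key structural claims: the roots $u_\pm$ are as you state; $u_+>r^2-1/r^2$ on the whole wedge (for $r^4\le 2$ because $u_+\ge r^2/2$, for $r^4>2$ because $\sin^2\phi\ge\tfrac12>r^4/(4(r^4-1))$); the switch occurs at $\sin^2\phi^*=r^4/(4(r^4-1))$, which lies in $[\tfrac12,1]$ exactly for $r^4\in[\tfrac43,2]$, reproducing the breakpoints; $1/u_+=2r^2\sin^2\phi-2\sin\phi\sqrt{r^4\sin^2\phi-1}$ via $u_-u_+=1/(4\sin^2\phi)$; and carrying out the $\phi$-integral for $r>2^{1/4}$ gives $\tfrac{3}{C}\bigl[2r(r^2-\sqrt{r^4-2})+\pi(r^4-1)/r-4\tfrac{r^4-1}{r}\arcsin\tfrac{r^2}{\sqrt{2(r^4-1)}}\bigr]$, which agrees with the third case of \eqref{28.1} under the identity $\tfrac{3\pi}{4}-\arcsin\tfrac{r^2}{\sqrt{2(r^4-1)}}=\arctan\tfrac{r^2+\sqrt{r^4-2}}{r^2-\sqrt{r^4-2}}$ (verified at $r^4=2$, as $r\to\infty$, and numerically). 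Your route arguably makes the origin of the breakpoints and of $\phi^*=\arctan(r^2/\sqrt{3r^4-4})$ more transparent, at the cost of the final $\arcsin\to\arctan$ conversions. One small point of bookkeeping: as printed, \eqref{27.1} carries the indicator $\chi_{0<y_1<y_2}$ together with the constraint $r^2<y_1^2+y_2^2+1/(4y_1^2)$, which on that ordering is the \emph{weaker} of the two Voronoi conditions; the paper's own subregions (e.g.\ $D$ and $d$) show that the intended ordering is $0<y_2<y_1$ with the constraint on the larger coordinate. Your reading --- that the binding condition caps the larger coordinate --- is the correct one, and by symmetry the label swap is immaterial.
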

\begin{proof}
Regarding $r>1$ as a parameter, there are three ranges of $r$ values giving a distinctly shaped region as defined by the three inequalities in \eqref{27.1}, see the figure below.

\begin{figure}[ht!]
\centering
\includegraphics[width=\textwidth]{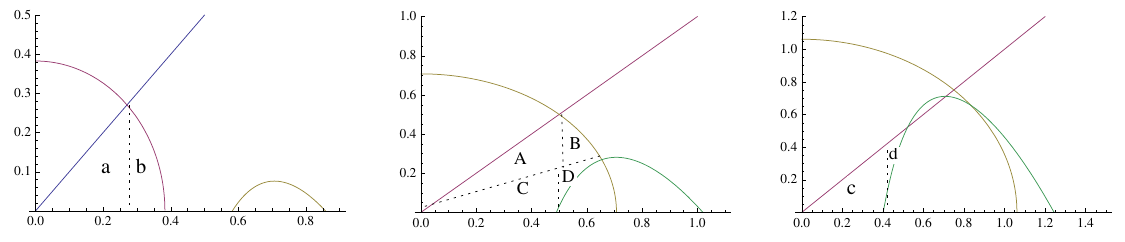}
\caption{This is a plot in the positive $y_1 y_2$-plane of the regions implied by the inequalities in (\ref{27.1}) for fixed values of $r$ in the ranges $<r<(4/3)^{1/4}$,
$(4/3)^{1/4}<r<2^{1/4}$ and $r>2^{1/4}$ respectively.
The common intersection of the inequalities corresponds to the labelled regions in each case.}
\end{figure}

The regions satisfying all the inequalities have been divided into subregions $a,\dots,d$, $A,\dots,D$, which allow for explicit parametrisation of the ranges of integration. Thus for $1<r<(4/3)^{1/4}$,
\begin{equation*}
a=\int_0^{\sqrt{(r^2-r^{-2})/2}}\mathrm{d}y_1 \int_0^{y_1}\mathrm{d}y_2\, ,\quad b=\int_{\sqrt{(r^2-r^{-2})/2}}^{\sqrt{r^2-r^{-2}}}\mathrm{d}y_1\int_0^{\sqrt{r^2-r^{-2}-y_1^2}}\mathrm{d}y_2\,;
\end{equation*}
or $(4/3)^{1/4}<r<2^{1/4}$,
\begin{equation*}
A=\int_0^{\sqrt{(r^2-r^{-2})/2}}\mathrm{d}y_1\int_{(y_1/r^2)\sqrt{3r^4-4}}^{y_1}\mathrm{d}y_2\, ,\quad B=\int_{\sqrt{(r^2-r^{-2})/2}}^{r/2}\mathrm{d}y_1\int_{(y_1/r^2)\sqrt{3r^4-4}}^{\sqrt{r^2-r^{-2}-y_1^2}}\mathrm{d}y_2
\end{equation*}
\begin{equation*}
C=\int_0^{\sqrt{(r^2-\sqrt{r^4-1})/2}}\mathrm{d}y_1\int_0^{(y_1/r^2)\sqrt{3r^4-4}}\mathrm{d}y_2\, ,\quad D=\int_{\sqrt{(r^2-r^{-2})/2}}^{r/2}\mathrm{d}y_1\int_{\sqrt{r^2-y_1^2-1/(4y_1^2)}}^{(y_1/r^2)\sqrt{3r^4-4}}\mathrm{d}y_2\,;
\end{equation*}
and for $r>2^{1/4}$
\begin{equation*}
c=\int_0^{\sqrt{(r^2-\sqrt{r^{4}-1})/2}}\mathrm{d}y_1\int_0^{y_1}\mathrm{d}y_2\, ,\quad d=\int_{\sqrt{(r^2-\sqrt{r^{4}-1})/2}}^{\sqrt{(r^2-\sqrt{r^{4}-1})/4}}\mathrm{d}y_1\int_{\sqrt{r^2-y_1^2-1/(4y_1^2)}}^{y_1}\mathrm{d}y_2.
\end{equation*}
To compute the PDF of the second shortest basis vector, each of these integrations should be extended to include the function $1/(r^2-y_1^2-y_2^2)^2$ for their integrand, as required by \eqref{27.1}. The resulting integrals can all be computed explicitly. Multiplying the result by $16\pi^2r$ as also required by \eqref{27.1}, and normalising by \eqref{50e} we obtain \eqref{28.1}.
\end{proof}

\begin{remark}\label{remark6a} Expanding (\ref{28.1}) for large $r$ one obtains with the help of computer algebra
$$
{3 \over C} \Big ( {1 \over  r^5} + {2 \over 3 r^9} + {\rm O} \Big ( {1 \over r^{13}} \Big ) \Big ).
$$
Multiplying by $dr$ to obtain the corresponding probability measure,
then changing variables $s=1/r$, the resulting PDF thus
has for its leading term in the small $s$ expansion $3 s^3 / C$.
This coincides with the small $t$ behaviour of the PDF for the shortest
vector (\ref{4.4}), and in particular has the same functional dependence on the arithmetic constant $C$.

In the case of lattice reduction applied to bases chosen with invariant measure from SL${}_2(\mathbb R)$,
one can deduce from  \cite[Eq.~(4.16)]{Fo16} that for large $s$ the
PDF for the distribution of the second
shortest basis vector has the large $s$ expansion
$(12/(\pi s)) (1/(2s^2) + 1/(8s^6) + \cdots)$. 
In the variable $\tilde{s} = 1/s$, the leading term in the 
$\tilde{s}  \to 0$ expansion of the transformed PDF is thus $6 \tilde{s}/\pi$.
This is precisely the form of
the PDF of the shortest lattice vector in the range $0 < s < 1$ 
\cite[Eq.~(4.15)]{Fo16}, analogous to what was just exhibited in relation to (\ref{28.1}) and (\ref{4.4}).
Such a property to be expected, as the volume of the unit cell must be unity, and in the case
of one very short vector, and one very long vector, the volume to leading order will just be the product
of the lengths, telling us that such vectors are equal in number.
\end{remark}

The final quantity to be considered is the complex analogue of the cosine of the angle between the shortest reduced basis vectors \eqref{51}. We write
\begin{equation}\label{51a}
\xi_R=\frac{t_{12}^{(r)}}{\sqrt{(t_{12}^{(r)})^2+(t_{12}^{(i)})^2+1/t_{11}^2}},\quad\xi_I=\frac{t_{12}^{(i)}}{\sqrt{(t_{12}^{(r)})^2+(t_{12}^{(i)})^2+1/t_{11}^2}}.
\end{equation}
Their joint distribution can be calculated according to the following result.

\begin{proposition}\label{prop13}
The variables $\xi_R,\xi_I$ specified by \eqref{51a} have joint distribution with PDF equal to
\begin{equation}\label{51b}
-\frac{3}{C}\frac{\log 4\operatorname{max}(|\xi_R|^2,|\xi_I|^2)}{4(1-\xi_R^2-\xi_I^2)^2}
\end{equation}
supported on 
\begin{equation}\label{51c}
\operatorname{max}(|\xi_R|^2,|\xi_I|^2)\leq 1/4.
\end{equation}
\end{proposition}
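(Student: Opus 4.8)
The plan is to push the invariant measure \eqref{50c} forward under the map $(t_{11},t_{12}^{(r)},t_{12}^{(i)})\mapsto(\xi_R,\xi_I)$ and then normalise by $\mathrm{vol}\,\widehat{\Gamma}$ from Proposition~\ref{prop11}. Adopting the notation $t=t_{11}$, $y_1=t_{12}^{(r)}$, $y_2=t_{12}^{(i)}$ used in the proof of Proposition~\ref{prop11}, write $r=(y_1^2+y_2^2+1/t^2)^{1/2}$ for the length of $\bm{\beta}$. First I would change variables from $(t,y_1,y_2)$ to $(\xi_R,\xi_I,r)$. From \eqref{51a} one has $y_1=r\xi_R$, $y_2=r\xi_I$ and $1/t^2=r^2(1-\xi_R^2-\xi_I^2)$, hence $t=1/(r\sqrt{1-\xi_R^2-\xi_I^2})$. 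A direct Jacobian computation gives $\mathrm{d}t\,\mathrm{d}y_1\,\mathrm{d}y_2=(1-\xi_R^2-\xi_I^2)^{-3/2}\,\mathrm{d}\xi_R\,\mathrm{d}\xi_I\,\mathrm{d}r$, so the integrand $(2\pi^2)t$ in \eqref{50c} becomes $(2\pi^2)\big/\big(r(1-\xi_R^2-\xi_I^2)^2\big)$.

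Next I would translate the three indicator constraints in \eqref{50c} into bounds on $r$ with $(\xi_R,\xi_I)$ held fixed. The inequality $\lVert\bm{\alpha}\rVert\le\lVert\bm{\beta}\rVert$, i.e.\ $t^2\le r^2$, becomes the lower bound $r\ge(1-\xi_R^2-\xi_I^2)^{-1/4}$; each of $2|y_i|\le t$ becomes an upper bound, and together they read $r\le(1-\xi_R^2-\xi_I^2)^{-1/4}\big/(2\max(|\xi_R|,|\xi_I|))^{1/2}$. The interval of admissible $r$ is non-empty precisely when $\max(|\xi_R|,|\xi_I|)\le 1/2$, which is the support condition \eqref{51c}. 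Integrating $(2\pi^2)\big/\big(r(1-\xi_R^2-\xi_I^2)^2\big)$ over this $r$-interval is elementary, since $\int\mathrm{d}r/r$ is a logarithm, and produces $-\big(2\pi^2\big/(1-\xi_R^2-\xi_I^2)^2\big)\cdot\tfrac14\log\!\big(4\max(|\xi_R|^2,|\xi_I|^2)\big)$, where I use $\log(2M)=\tfrac12\log(4M^2)$ with $M=\max(|\xi_R|,|\xi_I|)$ to match the form appearing in \eqref{51b}. Dividing by $\mathrm{vol}\,\widehat{\Gamma}=\tfrac{2\pi^2}{3}C$ from Proposition~\ref{prop11} then yields \eqref{51b}.

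There is no serious obstacle here; the only place that demands care is the bookkeeping, namely getting the Jacobian factor $(1-\xi_R^2-\xi_I^2)^{-3/2}$ correct and faithfully converting all three geometric inequalities defining the fundamental domain into the right lower and upper limits on $r$ (and hence the stated support). Once the change of variables is in place, the remainder reduces to a single elementary one-dimensional integral together with the normalisation already computed in Proposition~\ref{prop11}.
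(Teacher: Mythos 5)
Your proposal is correct and follows essentially the same route as the paper: both reduce to a change of variables with Jacobian, translate the three indicator constraints of \eqref{50c} into an interval for the one remaining variable, and integrate $\mathrm{d}(\cdot)/(\cdot)$ to produce the logarithm before normalising by $\mathrm{vol}\,\widehat{\Gamma}$. The only (immaterial) difference is that you retain $r=\lVert\bm{\beta}\rVert$ as the third coordinate and integrate over it, whereas the paper keeps $t_{11}$ and integrates over that; your Jacobian $(1-\xi_R^2-\xi_I^2)^{-3/2}$, the $r$-interval, the support condition and the final constant all check out.
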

\begin{proof}
It follows from \eqref{51a} that
\begin{equation*}
t_{12}^{(r)}=\frac{\xi_R}{t_{11}\sqrt{1-\xi_R^2-\xi_I^2}},\quad t_{12}^{(i)}=\frac{\xi_I}{t_{11}\sqrt{1-\xi_R^2-\xi_I^2}}.
\end{equation*}
The Jacobian for the change of variables from $(t_{12}^{(r)},t_{12}^{(i)})=:
(t_{12},s_{12})$
 to $(\xi_R,\xi_I)$ is thus
\begin{equation*}
\left\lvert\det\begin{bmatrix}\frac{\partial t_{12}}{\partial \xi_R}&\frac{\partial t_{12}}{\partial \xi_I}\\ \frac{\partial s_{12}}{\partial \xi_R}&\frac{\partial s_{12}}{\partial \xi_I}\end{bmatrix}\right\rvert=\frac{1}{t_{11}^2(1-\xi_R^2-\xi_I^2)^2}.
\end{equation*}
The functional form in \eqref{50c} thus transforms to
\begin{equation*}
\frac{(2\pi^2)}{t_{11}(1-\xi_R^2-\xi_I^2)^2}\chi_{t_{11}^4<\frac{1}{1-\xi_R^2-\xi_I^2}}\chi_{t_{11}^4>\frac{4\xi_I^2}{1-\xi_R^2-\xi_I^2}}\chi_{t_{11}^4>\frac{4\xi_R^2}{1-\xi_R^2-\xi_I^2}} \,
dt_{11} d\xi_R d \xi_I.
\end{equation*}
Integration over $t_{11}$ in this expression is elementary, and after dividing by the normalisation \eqref{50e} the PDF \eqref{51b} results.
\end{proof}
\begin{corollary}\label{C15}
Let $\xi_R=\xi\cos\theta,\,\xi_I=\xi\sin\theta,\,\xi>0,\,0<\theta<2\pi$ so that $\xi=(\xi_R^2+\xi_I^2)^{1/2}$. The PDF of $\xi$ is equal to
\begin{equation}\label{51d}
-\frac{6 \xi}{C(1-\xi^2)^2}\left( \chi_{0<\xi<1/2} \left(\frac{\pi}{2}\log \xi+C\right)+\chi_{1/2<\xi<1/\sqrt{2}}\int_{\arccos(1/2\xi)}^{\pi/4}\log(4\xi^2\cos^2\theta)\mathrm{d}\theta\right).
\end{equation}
\end{corollary}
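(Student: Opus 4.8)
The plan is to pass from the joint density of $(\xi_R,\xi_I)$ given in Proposition~\ref{prop13} to the density of $\xi$ by introducing polar coordinates $\xi_R=\xi\cos\theta$, $\xi_I=\xi\sin\theta$ (Jacobian $\xi$) and integrating out $\theta$. Writing $M(\theta):=\max(\cos^2\theta,\sin^2\theta)$, the density of Proposition~\ref{prop13} pulls back to $-\frac{3}{C}\,\frac{\log(4\xi^2M(\theta))}{4(1-\xi^2)^2}$, supported on $\xi^2M(\theta)\le\quarter$, so that the PDF of $\xi$ equals
\[
-\frac{3\xi}{4C(1-\xi^2)^2}\int_{\{\theta\in(0,2\pi):\,\xi^2M(\theta)\le 1/4\}}\log\!\big(4\xi^2M(\theta)\big)\,\mathrm{d}\theta .
\]
Both $M$ and the support condition are invariant under the dihedral symmetries $\theta\mapsto\theta+\pi/2$ and $\theta\mapsto\pi/2-\theta$, which generate a group of order $8$ with fundamental domain of length $\pi/4$; hence the $\theta$-integral over $(0,2\pi)$ is $8$ times that over $(0,\pi/4)$, on which $M(\theta)=\cos^2\theta$. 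Combining this $8$ with the prefactor produces the overall factor $-6\xi/(C(1-\xi^2)^2)$ already visible in \eqref{51d}, and reduces matters to computing $\int\log(4\xi^2\cos^2\theta)\,\mathrm{d}\theta$ over the sub-interval of $(0,\pi/4)$ on which $\cos\theta\le 1/(2\xi)$.

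Next I would split into the three $\xi$-regimes dictated by this constraint. For $0<\xi<\half$ one has $1/(2\xi)>1\ge\cos\theta$, so the whole interval $(0,\pi/4)$ contributes; for $\half<\xi<1/\sqrt{2}$ one has $1/\sqrt{2}<1/(2\xi)<1$, so the admissible range is $(\arccos(1/(2\xi)),\pi/4)$; and for $\xi>1/\sqrt{2}$ the inequality $\cos\theta\le 1/(2\xi)$ has no solution in $(0,\pi/4)$ (there $\cos\theta\ge 1/\sqrt2$), so the density vanishes, consistent with the two indicator functions in \eqref{51d}. In the middle regime the $\theta$-integral does not collapse to an economical elementary closed form, so it is kept in the integral form $\int_{\arccos(1/(2\xi))}^{\pi/4}\log(4\xi^2\cos^2\theta)\,\mathrm{d}\theta$ appearing in the corollary.

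It then remains to evaluate the $\theta$-integral in the regime $0<\xi<\half$. Expanding $\log(4\xi^2\cos^2\theta)=2\log 2+2\log\xi+2\log\cos\theta$ and using the classical identity $\int_0^{\pi/4}\log\cos\theta\,\mathrm{d}\theta=\tfrac{C}{2}-\tfrac{\pi}{4}\log 2$ --- which follows from $\int_0^{\pi/4}\log\tan\theta\,\mathrm{d}\theta=-C$ together with $\int_0^{\pi/4}\log(\tfrac12\sin2\theta)\,\mathrm{d}\theta=-\tfrac{\pi}{2}\log 2$ --- one finds $\int_0^{\pi/4}\log(4\xi^2\cos^2\theta)\,\mathrm{d}\theta=\tfrac{\pi}{2}\log\xi+C$, which produces exactly the $\chi_{0<\xi<1/2}$ contribution in \eqref{51d}. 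The calculation is essentially routine; the only points needing care are the bookkeeping of the support region and the factor $8$ from the angular symmetry, and the sole non-elementary ingredient is the Catalan-constant evaluation of $\int_0^{\pi/4}\log\cos\theta\,\mathrm{d}\theta$.
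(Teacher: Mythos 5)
Your proposal is correct and follows essentially the same route as the paper: change to polar coordinates with Jacobian $\xi$, use the eight-fold dihedral symmetry to reduce the angular integral to $(0,\pi/4)$ where the maximum is $\cos^2\theta$, and split into the three $\xi$-regimes according to where $\cos\theta\leq 1/(2\xi)$ holds. The only difference is that you spell out the Catalan-constant evaluation $\int_0^{\pi/4}\log\cos\theta\,\mathrm{d}\theta=\tfrac{C}{2}-\tfrac{\pi}{4}\log 2$ explicitly, which the paper leaves implicit.
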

\begin{proof}
The Jacobian for the change of variables to polar coordinates is $d\xi_R d \xi_I = \xi d \xi d \theta$.
For $0<\xi<1/2$, the inequality \eqref{51c} is valid for all $0<\theta<2\pi$, and the integral over $\theta$ in \eqref{51b} is equal to
\begin{equation*}
-\frac{3}{4C(1-\xi^2)^2}8\int_0^{\pi/4}\log(4\xi^2\cos^2\theta)\mathrm{d}\theta
\end{equation*}
which after multiplication by $\xi$ evaluates to the first case in \eqref{51d}. For $1/2<\xi<1/\sqrt{2}$, and restricting $\theta$ to the range $0<\theta<\pi/4$, the inequality \eqref{51c} is valid for $\arccos(1/2\xi)<\theta<\pi/4$, and this implies the second case in \eqref{51d}.
\end{proof}

In \cite[Remark 4.5]{Fo16} it was noted that the PDF for the length of the shortest lattice vector in the real
case, which for $0 < s < 1$ was found to equal $6 s/ \pi$, is consistent with a corollary of Siegel's mean value
theorem \cite{Si45} requiring that the expected number of vectors in a disk of radius $R$ be equal to the area of the
disk. Siegel's mean value theorem in \cite{Si45} applies to the case of real lattices, but the more general statement
of the mean value theorem by Weil \cite{We46} (for a clear statement of the latter, see \cite[Th.~3]{Mo96}) removes this requirement, and in particular allows the case of a complex
lattice to be considered. 

 The corollary of the mean value theorem of interest is the requirement that the expected number
of vectors in the punctured complex disk of radius $R$, $\Omega(R)$, be equal to the volume of the disk. The
latter, corresponding to the set $|w|^2 + |z|^2 < R^2$, $w,z, \in \mathbb C$ is equal to the volume of a ball of
radius $R$ in $\mathbb R^4$, which has value ${\pi^2 \over 2} R^4$, so as a consequence of the mean value
theorem we must have
\begin{equation}\label{OR}
\Omega(R) = {\pi^2 \over 2} R^4.
\end{equation}

On the other hand, in light of Propositions \ref{prop12} and \ref{prop13} together, for $R<1$ the punctured complex disk
of radius $R$ will only contain certain Gaussian integer multiplies of the shortest lattice vector $\bm{\alpha}$:
$m \bm{\alpha}$, $m \in \mathbb Z[i]$ with $|m| \, || \bm{\alpha} || < R$, ($m \ne 0$). Define
$|| \bm{\alpha} || / R = s$, and define $N_{\mathbb Z[i]}(p)$ to be the number of Gaussian integers with {\it square}
norm less than or equal to $p$. Use of (\ref{4.4}) for $t < 1$ shows that for $R<1$
\begin{align}\label{100}
\Omega(R) & = {3 \over C} R^4 \sum_{p=1}^\infty N_{\mathbb Z[i]}(p) \int_{(1/(p+1))^{1/2}}^{(1/p)^{1/2}} s^3 \, ds  \nonumber \\
& = {3 R^4 \over 4 C} \sum_{p=1}^\infty N_{\mathbb Z[i]}(p) \Big ( {1 \over p^2}  - {1 \over (p+1)^2} \Big ) 
\: =  {3 R^4 \over 4 C}  \sum_{p=1}^\infty {M_{\mathbb Z[i]}(p) \over p^2},
\end{align}
where $M_{\mathbb Z[i]}(p) :=  N_{\mathbb Z[i]}(p) -  N_{\mathbb Z[i]}(p-1)$ specifies the number of Gaussian integers with square norm equal to $p$. In the notation (\ref{CO}) we have
$$
\sum_{p=1}^\infty {M_{\mathbb Z[i]}(p) \over p^2} = 4 \zeta_{\mathbb Z[i]} (2) = 4 {\pi^2 \over 6} C,
$$
which substituted in (\ref{100}) reclaims (\ref{OR}).

\subsection{The case of Eisenstein integers}\label{s4.5}
For the choices of $w$ as equal to $\frac{1}{2}(1+\sqrt{D})$ for $D=-3,-7,-11$ as in \eqref{Z2} the domain specified by the second condition in \eqref{36c} is a hexagon rather than a square $(D=-1)$, or rectangle $(D=-2)$ in the coordinates $X=t_{12}^{(r)}/t_{11},\, Y=t_{12}^{(i)}/t_{11}$. Specifically, for $D=-3$ the hexagon has vertices at 
\begin{equation}\label{69}
\left(0,\frac{1}{\sqrt{3}}\right), \left(\frac{1}{2},\frac{1}{2\sqrt{3}}\right),
\left(\frac{1}{2},-\frac{1}{2\sqrt{3}}\right),
\left(0,-\frac{1}{\sqrt{3}}\right),
\left(-\frac{1}{2},-\frac{1}{2\sqrt{3}}\right),
\left(-\frac{1}{2},\frac{1}{2\sqrt{3}}\right)
\end{equation}
and is thus a regular hexagon with side length $1/\sqrt{3}$, centred at the origin and with two sides parallel to the $y$-axis.  In terms of inequalities, this hexagon is specified by the requirements that
\begin{equation}\label{4.44a}
|X| < {1 \over 2}, \qquad \sqrt{3} |Y| + |X| < 1.
\end{equation}
Using the variables $\{t_{11},X,Y\}$ the analogue of \eqref{50c} for the invariant measure restricted to the domain of the shortest reduced basis is the expression
\begin{equation}\label{70}
(2\pi^2)t_{11}^3\chi_{1-1/t_{11}^4\leq X^2+Y^2}\chi_{(X,Y)\in\mathcal{H}}\mathrm{d}t_{11}\mathrm{d}X\mathrm{d}Y,
\end{equation}
where $\mathcal{H}$ denotes the above specified regular hexagon.

Analogous to the computation of (\ref{4.4}), the statistics of the shortest reduced basis can be obtained by appropriate integration over \eqref{70}. We begin with the normalisation, obtained by integrating \eqref{70}.
\begin{proposition}\label{p16}
Let the volume associated with \eqref{70} be denoted $\mathrm{vol}\, \widehat{\Gamma}_\mathcal{H}$. We have
\begin{equation}\label{70a1}
\mathrm{vol}\, \widehat{\Gamma}_\mathcal{H}=\frac{\pi}{2}\log 2-\frac{3\pi}{8}\log 3+\frac{3}{2}\left(\operatorname{Im}L_2\left(\frac{3-i\sqrt{3}}{6}\right)+\operatorname{Im}L_2\left(\frac{3+i\sqrt{3}}{4}\right)\right),
\end{equation}
where
\begin{equation}\label{70b}
L_2(z)=\sum_{n=1}^\infty\frac{z^n}{n^2}
\end{equation}
is the dilogarithm function.
\end{proposition}
\begin{proof}
For $t_{11}>1$ the inequalities in \eqref{70} correspond to the overlap between the regular hexagon $\mathcal{H}$ with vertices \eqref{69} and the outside of a circle of radius $1-1/t_{11}^4$. For $0<t_{11}<1$ the first inequality is always valid, and the remaining factor $\chi_{(X,Y)\in\mathcal{H}}$ is the indicator function of the hexagon. Noting that $\mathcal{H}$ has area $\sqrt{3}/2$ shows that integration over $X$ and $Y$ in \eqref{70} gives the function of $t$
\begin{equation}\label{71}
\chi_{0<t<1}t^3\frac{\sqrt{3}}{2}+\chi_{t>1}t^3\left(\frac{\sqrt{3}}{2}-V^{\mathcal{H} d}\left((1-1/t^4)^{1/2}\right)\right),
\end{equation}
where $V^{\mathcal{H} d}(a)$ is the area of overlap between the hexagon $\mathcal{H}$ and a disk of radius $a$ centred at the origin.

Elementary considerations give
\begin{equation}\label{72}
V^{\mathcal{H} d}(a)=\begin{cases}
\pi a^2, \quad &0<a<1/2,
\\ \pi a^2-6a^2\arctan(4a^2-1)^{1/2}+\frac{3}{2}(4a^2-1)^{1/2}, \quad &1/2<a<1/\sqrt{3},
\\ \frac{\sqrt{3}}{2}, \quad &a>1/\sqrt{3}.
\end{cases}
\end{equation}
If we write
\begin{equation*}
\mathrm{vol}\, \widehat{\Gamma}_\mathcal{H}=V_1+V_2,\quad V_2=-6\int_{(4/3)^{1/4}}^{(3/2)^{1/4}}\frac{1}{t}\arctan\left(3-\frac{4}{t^4}\right)^{1/2}\mathrm{d}t
\end{equation*}
then the integral over $t$ specifying $V_1$ as implied by \eqref{71} and \eqref{72} can either be done by elementary computation or the use of computer algebra and gives
\begin{equation}\label{73}
V_1=\frac{\pi}{4}\log\frac{3}{2}.
\end{equation}
For the integral defining $V_2$ straightforward changes of variables give
\begin{align}\label{74}
V_2&=-\frac{3}{2}\int_0^{1/\sqrt{3}}\frac{2s}{3-s^2}\arctan s\,\mathrm{d}s\nonumber
\\ &=\frac{\pi}{4}\log\frac{8}{3}-\frac{3}{2}\int_0^{1/\sqrt{3}}\frac{\log(3-s^2)}{1+s^2}\,\mathrm{d}s\nonumber
\\ &=3\pi\log 2-\frac{5\pi}{8}\log 3+\frac{3}{2}\left(\operatorname{Im}L_2\left(\frac{3-i\sqrt{3}}{6}\right)+\operatorname{Im}L_2\left(\frac{3+i\sqrt{3}}{4}\right)\right),
\end{align}
where the second equality uses integration by parts, and the third computer algebra; in the latter $L_2(z)$ is the dilogarithm function. Adding \eqref{73} and \eqref{74} gives the first line on \eqref{70a1}. 
\end{proof}

The volume  \eqref{70a1}, obtained by direct integration, can be written in a simpler form by adopting instead an indirect approach
using Siegel's mean value theorem.

\begin{proposition}
An alternative evaluation of the volume in Proposition \ref{p16} is
\begin{equation}\label{70a}
\mathrm{vol}\, \widehat{\Gamma}_\mathcal{H} =  \frac{1}{4}\operatorname{Im}L_2\left(\frac{1+i\sqrt{3}}{2}\right).
\end{equation}
\end{proposition}

\begin{proof}
According to (\ref{71}), for $0 < t < 1$ the PDF of the shortest vector is ${1 \over {\rm vol} \, \widehat{\Gamma}_\mathcal{H}} 
\frac{\sqrt{3}}{2} t^3$. Siegel's mean value theorem \cite{Si45}, generalised by Weil \cite{We46} to apply in the
present setting, has the consequence that the expected number of lattice points in a (complex) disk of
radius $R$ is equal to the area of that disk (this assumes a unit normalisation of the volume associated
with the integers; see below).

Repeating the considerations which led to (\ref{100}) we obtain
$$
\Omega(R) = {R^4 \over \mathrm{vol}\, \widehat{\Gamma}_\mathcal{H}} \Big ( {\sqrt{3} \over 2} \Big )
{1 \over 4}
\sum_{(m,n) \in \mathbb Z^2 \atop (m,n) \ne (0,0)}
{1 \over ((m+n/2)^2 + n^2 (3/4))^2}.
$$
As an analytic function in $s$ one has (see e.g.~\cite[Eq.~(1.4.16)]{BGMWZ13})
$$
\sum_{(m,n) \in \mathbb Z^2 \atop (m,n) \ne (0,0)}
{1 \over ((m+n/2)^2 + n^2 (3/4))^s} =  \sum_{(m,n) \in \mathbb Z^2 \atop (m,n) \ne (0,0)}
{1 \over (m^2 + mn + n^2 )^s} = 6 \zeta(s) g(s),
$$
where $\zeta(s)$ denotes the Riemann zeta function and
$$
g(s) = 1 - 2^{-s} + 4^{-s} - 5^{-s} + 7^{-s} - \cdots =
{2 \over \sqrt{3}} {\rm Im} \, {\rm Li}_s(e^{2 \pi i /3}),
$$
Li${}_s$ denoting the polylogarithm function. For $s=2$ (dilogarithm case) the duplication formula
${\rm Li}_2(z^2) = 2 ( {\rm Li}_2(z) + {\rm Li}_2(-z))$ implies $ {\rm Im} \, {\rm Li}_s(e^{2 \pi i /3}) =
{2 \over 3}   {\rm Im} \, {\rm Li}_s(e^{ \pi i /3}) $ and so substituting (\ref{70a}) we see the latter is
valid provided
\begin{equation}\label{SR}
\Omega(R) = \Big ( {4 \over 3} \Big ) \Big ( {\pi^2 R^4 \over 2} \Big ).
\end{equation}

This is a factor ${4 \over 3}$ bigger than (\ref{OR}). To understand this, we note that as a lattice in $\mathbb R^2$,
$\mathbb Z[i]$ has unit cells of area 1, while $\mathbb Z({1 \over 2} + i \sqrt{3})$ has unit cells of area ${\sqrt{3} \over 2}$.
The latter creates a scale factor which when raised to the power of $d$ (the (complex) dimension of the lattice; here $d=2$)
should be included in the meaning of $\Omega(R)$ (for a real lattice, choosing even integers rather than integers best
illustrates this point), thus implying (\ref{SR}).
\end{proof}

The (un-normalised) PDF for the length of the shortest basis vector is given by \eqref{71}. Normalising by \eqref{70a} and substituting \eqref{72} allows us to specify the analogue of Proposition \ref{prop12} in the case of the Eisenstein integers. 
\begin{proposition}
For random complex lattices in $\C^2$, with the defining basis vectors chosen with invariant measure and the lattice formed using the Eisenstein integers, the PDF for the length of the shortest basis vector is equal to
\begin{multline}\label{75}
\frac{1}{\mathrm{vol}\, \widehat{\Gamma}_\mathcal{H}}\bigg\{\chi_{0<t<1}t^3\frac{\sqrt{3}}{2}+\chi_{1<t<(4/3)^{1/4}}t^3\left(\frac{\sqrt{3}}{2}-\pi\left(1-\frac{1}{t^4}\right)\right)
\\ +\chi_{(4/3)^{1/4}<t<(3/2)^{1/4}}t^3\bigg(\frac{\sqrt{3}}{2}-\pi\left(1-\frac{1}{t^4}\right)+6\left(1-\frac{1}{t^4}\right)\arctan\left(3-\frac{4}{t^4}\right)^{1/2}
\\ -\frac{3}{2}\left(3-\frac{4}{t^4}\right)^{1/2}\bigg)\bigg\},
\end{multline}
where $\mathrm{vol}\, \widehat{\Gamma}_\mathcal{H}$ is given by \eqref{70a}.
\end{proposition}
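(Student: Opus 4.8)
The plan is to extract the PDF of $\lVert\bm{\alpha}\rVert=t_{11}$ from the joint density \eqref{70} by integrating out the two remaining coordinates $X,Y$ and then normalising. The integration over $(X,Y)$ has essentially been done already inside the proof of Proposition \ref{p16}: for fixed $t$ the integrand is $(2\pi^2)t^3$ times the indicator of the region $\{(X,Y)\in\mathcal{H}:X^2+Y^2\ge 1-1/t^4\}$, whose area equals $\tfrac{\sqrt3}{2}$ (the area of $\mathcal{H}$) for $0<t<1$ and equals $\tfrac{\sqrt3}{2}-V^{\mathcal{H} d}\big((1-1/t^4)^{1/2}\big)$ for $t>1$. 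Hence, up to the overall constant $2\pi^2$ (which is immaterial, being cancelled by the same factor implicit in $\mathrm{vol}\,\widehat{\Gamma}_{\mathcal{H}}$), the un-normalised PDF is exactly the expression \eqref{71}, with $V^{\mathcal{H} d}(a)$ the overlap area of $\mathcal{H}$ with a centred disk of radius $a$ furnished by the piecewise formula \eqref{72}.

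It then remains to unfold the three cases of \eqref{72} into explicit $t$-ranges. Setting $a=(1-1/t^4)^{1/2}$ for $t>1$, the inequality $a<\tfrac12$ is equivalent to $1/t^4>\tfrac34$, i.e.\ $1<t<(4/3)^{1/4}$; the inequality $\tfrac12<a<\tfrac1{\sqrt3}$ is equivalent to $\tfrac23<1/t^4<\tfrac34$, i.e.\ $(4/3)^{1/4}<t<(3/2)^{1/4}$; and $a>\tfrac1{\sqrt3}$ is equivalent to $t>(3/2)^{1/4}$, a range on which $V^{\mathcal{H} d}(a)=\tfrac{\sqrt3}{2}$ so that the bracket in \eqref{71} vanishes and $t_{11}$ cannot exceed $(3/2)^{1/4}$. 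Using the identities $a^2=1-1/t^4$ and $4a^2-1=3-4/t^4$ to substitute into \eqref{72} turns the first-case contribution $-\pi a^2$ into $-\pi(1-1/t^4)$ and the second-case contribution $-\big(\pi a^2-6a^2\arctan(4a^2-1)^{1/2}+\tfrac32(4a^2-1)^{1/2}\big)$ into $-\pi(1-1/t^4)+6(1-1/t^4)\arctan(3-4/t^4)^{1/2}-\tfrac32(3-4/t^4)^{1/2}$; multiplying through by $t^3$ and dividing by $\mathrm{vol}\,\widehat{\Gamma}_{\mathcal{H}}$ from Proposition \ref{p16} reproduces \eqref{75} line by line.

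I do not expect a real obstacle here: the genuinely analytic work --- the evaluation \eqref{72} of the hexagon--disk overlap area, and the evaluation \eqref{70a} of $\mathrm{vol}\,\widehat{\Gamma}_{\mathcal{H}}$ (including its dilogarithmic form) --- has already been carried out in the proof of Proposition \ref{p16}, and the conjectural second line of \eqref{70a} plays no role since \eqref{75} is phrased directly in terms of $\mathrm{vol}\,\widehat{\Gamma}_{\mathcal{H}}$. The only point demanding care is the bookkeeping: matching the disk-radius conditions in \eqref{72} to the correct $t$-intervals $\big(1,(4/3)^{1/4}\big)$ and $\big((4/3)^{1/4},(3/2)^{1/4}\big)$, and performing the substitutions $a^2\mapsto 1-1/t^4$ and $4a^2-1\mapsto 3-4/t^4$ consistently throughout.
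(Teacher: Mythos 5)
Your proposal is correct and follows essentially the same route as the paper: the un-normalised PDF is read off as \eqref{71} from the proof of Proposition \ref{p16}, the piecewise overlap formula \eqref{72} is substituted with $a^2=1-1/t^4$, the disk-radius conditions are converted to the $t$-intervals $(1,(4/3)^{1/4})$ and $((4/3)^{1/4},(3/2)^{1/4})$, and the result is normalised by $\mathrm{vol}\,\widehat{\Gamma}_{\mathcal{H}}$. Your remark that the factor $2\pi^2$ cancels between the numerator and the normalisation is the one bookkeeping point worth making explicit, and you handle it correctly.
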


We have not attempted to compute the PDF of the second shortest basis vector, due to the complexity
of the calculation as evident from the proof of Proposition \ref{prop12}. However, the computation
of the joint distribution of
\begin{equation}\label{zXY}
\xi_R = {X \over \sqrt{X^2 + Y^2 + 1/ t_{11}^4}}, \qquad
\xi_I = {Y \over \sqrt{X^2 + Y^2 + 1/ t_{11}^4}}
\end{equation}
and thus the analogue of Proposition \ref{prop13} is a straightforward computation.

\begin{proposition}\label{prop13a}
The joint distribution of the variables $\xi_R$, $\xi_I$ as specified by (\ref{zXY}) has PDF
$$
- {1 \over {\rm Im} \, L_2((1+i \sqrt{3})/2)}
{\log {\rm max} \, (4 |\xi_R|^2, (|\xi_R| + \sqrt{3} | \xi_I|)^2) \over (1 - \xi_R^2 - \xi_I^2)^2}
$$
supported on ${\rm max} \,(4 |\xi_R|^2, (|\xi_R| + \sqrt{3} | \xi_I|^2)) \le 1$.
\end{proposition}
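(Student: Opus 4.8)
The plan is to transcribe the proof of Proposition~\ref{prop13}, the two differences being that the square indicator in \eqref{50c} is replaced by the hexagon indicator \eqref{4.44a}, and that the powers of $t_{11}$ in the starting measure \eqref{70} differ from those in \eqref{50c} because $1/t_{11}^4$ (rather than $1/t_{11}^2$) appears in the change of variables \eqref{zXY}. First I would invert \eqref{zXY}: writing $D = 1 - \xi_R^2 - \xi_I^2$, the relations there give $\sqrt{X^2+Y^2+1/t_{11}^4} = 1/(t_{11}^2\sqrt{D})$ and hence $X = \xi_R/(t_{11}^2\sqrt{D})$, $Y = \xi_I/(t_{11}^2\sqrt{D})$, which is a genuine bijection for $t_{11}>0$ since $X,\xi_R$ and $Y,\xi_I$ share signs. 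The Jacobian computation is then exactly the one in the proof of Proposition~\ref{prop13}, giving at fixed $t_{11}$ that $\mathrm{d}X\,\mathrm{d}Y = \mathrm{d}\xi_R\,\mathrm{d}\xi_I/(t_{11}^4 D^2)$, so that \eqref{70} becomes a constant multiple of $t_{11}^{-1}D^{-2}$ times the transformed indicators.

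Next I would rewrite the three constraints in \eqref{70} as bounds on $t_{11}^4$. The circle constraint $1 - 1/t_{11}^4 \le X^2 + Y^2$ becomes $t_{11}^4 \le 1/D$, exactly as in the Gaussian case; the hexagon constraint $|X| \le 1/2$ becomes $t_{11}^4 \ge 4\xi_R^2/D$; and $\sqrt{3}|Y| + |X| \le 1$ becomes $t_{11}^4 \ge (|\xi_R| + \sqrt{3}|\xi_I|)^2/D$ (both sides of $\sqrt 3|Y|+|X| \le t_{11}^2\sqrt D$ being non-negative, so squaring is reversible). Hence $t_{11}$ ranges over the interval in which $t_{11}^4$ lies between $M/D$ and $1/D$, with $M := \max\!\left(4\xi_R^2,(|\xi_R| + \sqrt{3}|\xi_I|)^2\right)$, and this interval is non-degenerate precisely when $M \le 1$ --- the asserted support.

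Since the $t_{11}$-dependence of the transformed measure is only through $\mathrm{d}t_{11}/t_{11}$, integrating over that interval gives $\tfrac14\log(1/M) = -\tfrac14\log M$, and the prefactor $D^{-2}$ then yields an un-normalised PDF proportional to $-\log M/(1-\xi_R^2-\xi_I^2)^2$. Dividing by the total mass $\mathrm{vol}\,\widehat{\Gamma}_\mathcal{H}$ --- evaluated via Proposition~\ref{p16}, using the conjectural closed form $\mathrm{vol}\,\widehat{\Gamma}_\mathcal{H} = \tfrac14\operatorname{Im}L_2((1+i\sqrt{3})/2)$ --- fixes the overall constant at $-1/\operatorname{Im}L_2((1+i\sqrt{3})/2)$ and completes the proof. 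I do not anticipate a genuine obstacle: the calculation is a routine adaptation of Proposition~\ref{prop13}, and the only points needing care are the translation of the hexagon inequalities \eqref{4.44a} into the $t_{11}$-bounds above (in particular the squaring step) and the observation that the support is exactly where the $t_{11}$-interval is non-degenerate; the sole non-elementary ingredient --- the evaluation of $\mathrm{vol}\,\widehat{\Gamma}_\mathcal{H}$ together with the appearance of $\operatorname{Im}L_2((1+i\sqrt{3})/2)$ --- is imported from Proposition~\ref{p16}.
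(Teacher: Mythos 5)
Your proposal is correct and is precisely the ``straightforward computation'' the paper intends: Proposition~\ref{prop13a} is stated without proof as the analogue of Proposition~\ref{prop13}, and your change of variables, the translation of the hexagon inequalities \eqref{4.44a} into the bounds $M/D\leq t_{11}^4\leq 1/D$, and your reading $M=\max\bigl(4\xi_R^2,(|\xi_R|+\sqrt{3}|\xi_I|)^2\bigr)$ (which silently corrects the typographical slips in the printed statement) all match what the paper would do. The one step deserving an explicit word is the normalisation bookkeeping: the prefactor $2\pi^2$ in \eqref{70} must be treated consistently with Proposition~\ref{p16}, whose value \eqref{70a} is the integral of \eqref{71} with that $2\pi^2$ already stripped, so that the $\tfrac14$ coming from $\int\mathrm{d}t_{11}/t_{11}$ cancels against the $\tfrac14$ in $\mathrm{vol}\,\widehat{\Gamma}_{\mathcal H}=\tfrac14\operatorname{Im}L_2\bigl((1+i\sqrt{3})/2\bigr)$ and the overall constant is indeed $-1/\operatorname{Im}L_2\bigl((1+i\sqrt{3})/2\bigr)$ as you claim.
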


\section{The quaternion Lagrange-Gauss algorithm}\label{s5}
The definition of the quaternion number system was revised at the beginning of Section \ref{s4.2}. The Hurwitz integers $H$ are the quaternions \eqref{aa} with each $a_i$ either all integers, or all half integers. Their distinguishing feature from the obvious Lipschitz integers, defined as the quaternions \eqref{aa} with each $a_i$ an integer, is that they allow for a Euclidean algorithm \cite{CS03}. With $\mathbf{b}_0,\mathbf{b}_1\in\mathbb{H}^2$ we make use of the Hurwitz integers to define the quaternion lattice
\begin{equation}\label{5.0d}
\mathcal{L}_H=\{m_0\mathbf{b}_0+m_1\mathbf{b}_1\,|\,m_0,m_1\in H\}.
\end{equation}
For notational convenience let us rewrite \eqref{aa} as $a=\sum_{j=0}^3a_je_j$, $a_j\in\R$, and denote $\operatorname{Re}q=a_0$, $\operatorname{Im}_{e_j}q=a_j\,(j=1,2,3)$. For $z\in H$ define the lattice quantizer
\begin{equation}\label{5.0}
D_H(z)=\mathop{\mathrm{argmin}}_{\lambda\in H}\lVert \lambda-z\rVert.
\end{equation}
We see that analogous to \eqref{Db}
\begin{equation*}
D_H(z)=\mathop{\mathrm{argmin}}_{\beta\in\{D_{H_1}(z),D_{H_2}(z)\}}|\beta-z|
\end{equation*}
where
\begin{align*}
D_{H_1}(z)&=\left\lceil\operatorname{Re} z\right\rfloor+\sum_{\nu=1}^3e_\nu\left\lceil\operatorname{Im}_{e_\nu} z\right\rfloor
\\ D_{H_2}(z)&=\left\lceil\operatorname{Re}(z-1/2)\right\rfloor+\frac{1}{2}+\sum_{\nu=1}^3e_\nu\left(\left\lceil\operatorname{Im}_{e_\nu}(z-1/2)\right\rfloor+\frac{1}{2}\right).
\end{align*}
The lattice quantizer is relevant to the formulation of a quaternion Lagrange-Gauss algorithm. Thus the reasoning leading to \eqref{33a} tells us that
\begin{equation}\label{QuR}
\mathbf{b}_{j+1}=\mathbf{b}_{j-1}- \mathbf{b}_j \, D_H\left(\frac{\overline{\mathbf b}_j \cdot\mathbf{b}_{j-1}}{\rVert\mathbf{b_j}\lVert^2}\right)
\end{equation}
(note the order of the multiplication in the final term).
We will see below that the analogues of Lemma \ref{lemma7} and Proposition \ref{prop8} remain true. On the other hand, the rewrite of this
quaternion vector equation to a scalar equation using  the doubling of the quaternions to the octonions as implied
by (\ref{CD}) breaks down. This is because to identify the first component of $(\overline{a}, -b)(c,d)$ as
specified by (\ref{CD}) with a dot product requires that $d \overline{b} =  \overline{b}  d$ --- and thus commutivity ---
which is not true in general for quaternions. 

Iteration of (\ref{QuR}) typically gives smaller vectors, as known in the real and complex cases from Lemmas
\ref{lemma7}  and \ref{lemma9}.

\begin{lemma}\label{Lq2}
Define $m_j$ by (\ref{32a}) with $\mathbb Z[w]$ replaced by $H$. Define $\mathbf b_{j+1}$ by (\ref{QuR}) and
suppose the resulting value of $m_{j+1}$ is nonzero. Then
$$
|| \mathbf b_{j+1} || < || \mathbf b_{j} ||.
$$
\end{lemma}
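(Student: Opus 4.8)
The plan is to follow the proofs of Lemmas~\ref{lemma7} and~\ref{lemma9} essentially verbatim, the one point requiring care being the order of quaternion multiplication. Write $\zeta = \overline{\mathbf b}_j \cdot \mathbf b_{j-1}/\lVert \mathbf b_j\rVert^2$, so that by \eqref{32a} (with $\Z[w]$ replaced by $H$) one has $m_j = D_H(\zeta)$ and \eqref{QuR} reads $\mathbf b_{j+1} = \mathbf b_{j-1} - \mathbf b_j D_H(\zeta)$. The idea is to take the appropriate ``dot product'' of \eqref{QuR}, deduce a vanishing quantizer identity analogous to \eqref{36a}, then compare it with the defining formula \eqref{36b} for $m_{j+1}$ and invoke convexity of the relevant Voronoi cell.

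First I would record the elementary identity
\[
\overline{\mathbf b}_j \cdot (\mathbf b_j\, \lambda) = \lVert \mathbf b_j\rVert^2 \lambda, \qquad \lambda \in \mathbb{H},
\]
which holds because, writing $\mathbf b_j = ((b_j)_1,(b_j)_2)$, the left side equals $\sum_i \overline{(b_j)_i}\,(b_j)_i\,\lambda$ and each $\overline{(b_j)_i}(b_j)_i = |(b_j)_i|^2$ is a real number, hence central; this is precisely why the scalar $D_H(\cdot)$ has been placed to the right in \eqref{QuR}. Applying $\overline{\mathbf b}_j \cdot\,$ to both sides of \eqref{QuR} and dividing by $\lVert \mathbf b_j\rVert^2$ then gives $\overline{\mathbf b}_j \cdot \mathbf b_{j+1}/\lVert \mathbf b_j\rVert^2 = \zeta - D_H(\zeta)$. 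Since $D_H(\zeta) - \zeta$ lies in the Voronoi region of the origin in $H$, and that region is symmetric about the origin, the right-hand side lies there too, whence $D_H\bigl(\overline{\mathbf b}_j \cdot \mathbf b_{j+1}/\lVert \mathbf b_j\rVert^2\bigr) = 0$, the analogue of \eqref{36a}. As $H$ is closed under quaternion conjugation and conjugation is an isometry of $\R^4$ fixing the origin, $D_H$ commutes with conjugation, so conjugating yields equivalently $D_H\bigl(\overline{\mathbf b}_{j+1}\cdot \mathbf b_j/\lVert \mathbf b_j\rVert^2\bigr) = 0$.

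Next, \eqref{32a} with index shifted to $j+1$ together with the rewriting used in \eqref{38a}, \eqref{36b} gives
\[
m_{j+1} = D_H\!\left(\frac{\overline{\mathbf b}_{j+1}\cdot \mathbf b_j}{\lVert \mathbf b_{j+1}\rVert^2}\right)
= D_H\!\left(\frac{\lVert \mathbf b_j\rVert^2}{\lVert \mathbf b_{j+1}\rVert^2}\cdot \frac{\overline{\mathbf b}_{j+1}\cdot \mathbf b_j}{\lVert \mathbf b_j\rVert^2}\right).
\]
Writing $\zeta' = \overline{\mathbf b}_{j+1}\cdot \mathbf b_j/\lVert \mathbf b_j\rVert^2$ and $c = \lVert \mathbf b_j\rVert^2/\lVert \mathbf b_{j+1}\rVert^2 > 0$, the previous paragraph shows $\zeta'$ lies in the closed Voronoi cell $\mathcal V_0$ of the origin, while the hypothesis $m_{j+1}\neq 0$ says $D_H(c\zeta')\neq 0$, i.e.\ $c\zeta'\notin \mathcal V_0$. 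Since $\mathcal V_0$ is convex and contains the origin (for the Hurwitz integers it is a copy of the $24$-cell), if $0 < c \leq 1$ then $c\zeta'$ would lie on the segment $[0,\zeta'] \subseteq \mathcal V_0$, a contradiction; hence $c > 1$, that is $\lVert \mathbf b_{j+1}\rVert < \lVert \mathbf b_j\rVert$, as claimed.

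The routine verifications are the displayed identity $\overline{\mathbf b}_j\cdot(\mathbf b_j\lambda) = \lVert \mathbf b_j\rVert^2\lambda$, the fact that $D_H$ commutes with conjugation, and the convexity of $\mathcal V_0$. I do not anticipate a genuine obstacle: the only new phenomenon relative to the real and complex cases is noncommutativity, which is exactly what dictates the placement of the scalar in \eqref{QuR} (and, as the text notes, what makes the corresponding octonion scalar rewrite fail), so as long as the multiplication order is respected throughout, the argument of Lemmas~\ref{lemma7} and~\ref{lemma9} transfers without change.
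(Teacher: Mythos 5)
Your proof is correct and follows the same route as the paper, which simply asserts that ``the same proof as for Lemma~\ref{lemma7} suffices''; the key comparison of $D_H(\zeta')=0$ with $m_{j+1}=D_H(c\zeta')$ for $c=\lVert\mathbf b_j\rVert^2/\lVert\mathbf b_{j+1}\rVert^2$ is exactly the mechanism of \eqref{38a} and \eqref{36b}. In fact you supply details the paper leaves implicit --- the identity $\overline{\mathbf b}_j\cdot(\mathbf b_j\lambda)=\lVert\mathbf b_j\rVert^2\lambda$ justifying the right-placement of the scalar in \eqref{QuR}, the compatibility of $D_H$ with conjugation, and the convexity of the Voronoi cell needed to conclude $c>1$ --- all of which check out.
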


\begin{proof}
The same proof as for Lemma \ref{lemma7} suffices.
\end{proof}

As in the analogous setting for lattice reduction in $\mathbb R^2$ and $\mathbb C^2$, it follows from Lemma
\ref{Lq2} that the quaternion Lagrange-Gauss algorithm terminates, and furthermore that the output vectors
$\bm{\alpha}, \bm{\beta}$ can be chosen to satisfy
\begin{equation}\label{Oc1}
|| \bm{\alpha} || \le || \bm{\beta} ||, \qquad
D_H \Big ( {\overline{\bm{\alpha}} \cdot \bm{\beta} \over || \bm{\alpha} ||^2} \Big ) = 0.
\end{equation}
The second of these conditions is equivalent to requiring that
$$
|| \bm{\beta} + n \bm{\alpha} || \ge || \bm{\beta} ||, \qquad \forall n \in H
$$
(cf.~going from (\ref{36c}) to (\ref{36d})) and thus $\{ \bm{\alpha}, \bm{\beta} \}$ is a greedy basis.
But we know from the proofs of Propositions  \ref{prop8} and \ref{p10} that subject only to the set of integers --- here the
Hurwitz integers H --- being a Euclidean domain with absolute value for norm, the greedy basis $\{ \bm{\alpha}
, \bm{\beta} \}$ is the shortest reduced basis. It has already been remarked that as distinct from
the Lipschitz integers the Hurwitz integers do allow for a Euclidean algorithm, and it furthermore is true that
the absolute value function is the norm. Hence we have a quaternion analogue of 
Propositions \ref{prop8} and \ref{p10}.

\begin{proposition}
Let $\{ \bm{\alpha}, \bm{\beta} \}$ be a greedy basis for the Hurwitz integer quaternion lattice
(\ref{5.0d}). Then $\{ \bm{\alpha}, \bm{\beta} \}$  is a shortest reduced basis.
\end{proposition}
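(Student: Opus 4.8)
The plan is to carry the proofs of Propositions~\ref{prop8} and~\ref{p10} over to the Hurwitz integers $H$, the one genuinely new feature being that $\mathbb{H}$ is non-commutative while $H$, though a Euclidean domain for the absolute value, is not literally a principal ideal domain. The facts about $H$ I would use are classical: $|q|^{2}$ is the reduced norm on $\mathbb{H}$, hence multiplicative, so that $\lVert m\mathbf v\rVert=|m|\,\lVert\mathbf v\rVert$ for $m\in\mathbb{H}$ and $\mathbf v\in\mathbb{H}^{2}$; it is integer valued on $H$, so every nonzero $m\in H$ has $|m|^{2}\geq1$, with $|m|^{2}=1$ exactly for the $24$ units; and $H$, regarded as a lattice in $\mathbb{R}^{4}$, is a rescaled copy of the root lattice $D_{4}$, so its covering radius is $1/\sqrt{2}$, i.e.\ every quaternion lies within Euclidean distance $1/\sqrt{2}$ of a Hurwitz integer.

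Then I would fix a greedy basis $\{\bm{\alpha},\bm{\beta}\}$, so $\lVert\bm{\alpha}\rVert\leq\lVert\bm{\beta}\rVert$ and, in the form of the second condition of~\eqref{Oc1} recorded by the remark just after it, $\lVert\bm{\beta}+n\bm{\alpha}\rVert\geq\lVert\bm{\beta}\rVert$ for all $n\in H$; let $\mathbf v=n_{1}\bm{\alpha}+n_{2}\bm{\beta}$ be an arbitrary nonzero element of the lattice~\eqref{5.0d}, with $n_{1},n_{2}\in H$ acting on the left. If $n_{2}=0$ then $\lVert\mathbf v\rVert=|n_{1}|\,\lVert\bm{\alpha}\rVert\geq\lVert\bm{\alpha}\rVert$. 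For $n_{2}\neq0$ I would split $\mathbb{H}^{2}=\mathbb{H}\bm{\alpha}\oplus W$ into the left-$\mathbb{H}$-span of $\bm{\alpha}$ and its Euclidean orthogonal complement $W$; associativity in $\mathbb{H}$ shows that $W$ is again a left-$\mathbb{H}$-submodule, so writing $\bm{\beta}=c\bm{\alpha}+\bm{\beta}_{\perp}$ with $c\in\mathbb{H}$, $\bm{\beta}_{\perp}\in W$, one gets, using that the two summands are orthogonal and that $|\cdot|$ is multiplicative,
\[
\mathbf v=(n_{1}+n_{2}c)\bm{\alpha}+n_{2}\bm{\beta}_{\perp},
\qquad
\lVert\mathbf v\rVert^{2}=|n_{1}+n_{2}c|^{2}\lVert\bm{\alpha}\rVert^{2}+|n_{2}|^{2}\lVert\bm{\beta}_{\perp}\rVert^{2}\ \geq\ |n_{2}|^{2}\lVert\bm{\beta}_{\perp}\rVert^{2}.
\]
The greedy condition says precisely that $\bm{\beta}$ is a shortest vector of its coset $\bm{\beta}+H\bm{\alpha}$, equivalently that $0$ is a closest point of $H$ to $c$, so $c$ lies in the Voronoi cell of the origin and $|c|^{2}\leq(1/\sqrt{2})^{2}=\tfrac12$; hence $\lVert\bm{\beta}_{\perp}\rVert^{2}=\lVert\bm{\beta}\rVert^{2}-|c|^{2}\lVert\bm{\alpha}\rVert^{2}\geq\lVert\bm{\beta}\rVert^{2}-\tfrac12\lVert\bm{\beta}\rVert^{2}=\tfrac12\lVert\bm{\beta}\rVert^{2}$, using $\lVert\bm{\alpha}\rVert\leq\lVert\bm{\beta}\rVert$. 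Therefore $\lVert\mathbf v\rVert^{2}\geq\tfrac12|n_{2}|^{2}\lVert\bm{\beta}\rVert^{2}$, which is $\geq\lVert\bm{\beta}\rVert^{2}$ once $|n_{2}|^{2}\geq2$, i.e.\ whenever $n_{2}$ is not a unit; and if $n_{2}$ is a unit then $\mathbf v=n_{2}\bigl(\bm{\beta}+(n_{2}^{-1}n_{1})\bm{\alpha}\bigr)$ with $n_{2}^{-1}n_{1}\in H$, so $\lVert\mathbf v\rVert=\lVert\bm{\beta}+(n_{2}^{-1}n_{1})\bm{\alpha}\rVert\geq\lVert\bm{\beta}\rVert$ by the greedy condition. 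In every case $\lVert\mathbf v\rVert\geq\lVert\bm{\beta}\rVert\geq\lVert\bm{\alpha}\rVert$ when $n_{2}\neq0$, and $\mathbf v$ is linearly independent of $\bm{\alpha}$ exactly when $n_{2}\neq0$; this shows at once that $\bm{\alpha}$ is a shortest nonzero vector of the lattice and $\bm{\beta}$ a shortest vector linearly independent of $\bm{\alpha}$, i.e.\ $\{\bm{\alpha},\bm{\beta}\}$ is a shortest reduced basis.

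The main obstacle, and the only step that needs more than a verbatim copy of Propositions~\ref{prop8} and~\ref{p10}, is the borderline inequality $\tfrac12|n_{2}|^{2}\geq1$: over $\Z$ one gains a full unit of slack from the integrality of the remainder in the division of~\eqref{39d}--\eqref{39e}, whereas over $H$ the estimate only just survives, precisely because the covering radius $1/\sqrt{2}$ and the minimal norm $2$ of a non-unit obey $2\bigl(1-(1/\sqrt{2})^{2}\bigr)=1$; this is why I would route the argument through the orthogonal splitting above rather than transcribe the remainder reduction. Everything else is routine: termination of the quaternion Lagrange-Gauss algorithm and the equivalence of $D_{H}\bigl(\overline{\bm{\alpha}}\cdot\bm{\beta}/\lVert\bm{\alpha}\rVert^{2}\bigr)=0$ with the greedy inequality are already supplied by Lemma~\ref{Lq2} and the discussion around~\eqref{QuR}, and non-commutativity enters only through keeping scalars on the side dictated by~\eqref{5.0d} and through the identity $\lVert m\mathbf v\rVert=|m|\,\lVert\mathbf v\rVert$.
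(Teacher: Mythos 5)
Your proof is correct, but it follows a genuinely different route from the paper's. The paper disposes of this proposition in one sentence: it asserts that the proofs of Propositions~\ref{prop8} and~\ref{p10} apply verbatim because the Hurwitz integers form a Euclidean domain with the absolute value as norm, i.e.\ it relies on the division-with-remainder and triangle-inequality chain \eqref{39d}--\eqref{39e} with $r$ replaced by $|r|$. You instead decompose $\mathbb{H}^2=\mathbb{H}\bm{\alpha}\oplus W$, use Pythagoras and the multiplicativity of the quaternion norm to get $\lVert\mathbf v\rVert^2\geq|n_2|^2\lVert\bm{\beta}_\perp\rVert^2$, and then invoke the covering radius $1/\sqrt{2}$ of the Hurwitz lattice together with the fact that non-units have norm at least $2$, splitting off the unit case separately. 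Your stated reason for rerouting is well taken, and is the main substantive difference: the paper's argument needs $(|n_2|-|r|)\geq 1$ in the last step of \eqref{39e}, which is automatic for rational integers but does \emph{not} follow from $|r|<|n_2|$ once the norms are irrational (e.g.\ $|n_2|=\sqrt2$ with best remainder $|r|=1$ is possible over $\Z[i]$ and over $H$), so the verbatim transcription is delicate at exactly the point you identify; your orthogonal-projection argument closes that issue, at the modest cost of the unit/non-unit case distinction and the external inputs (covering radius of $H$, integrality of the reduced norm). One cosmetic caveat: the paper's recurrence \eqref{QuR} places the Hurwitz scalar on the right while \eqref{5.0d} writes it on the left; your argument is insensitive to this provided the side is fixed consistently, as you note.
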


As for the real and complex cases, a convenient parametrisation of the shortest basis is obtained by using
the Gram--Schmidt basis. Thus one decomposes $V = UT$ where $U \in {\rm SL}_2(\mathbb H)$ and
$$
T = \begin{bmatrix} t_{11}e_0 & t_{12}^0e_0 + \sum_{l=1}^3 e_l t_{12}^l \\
0 & t_{22}e_0 \end{bmatrix}, \qquad t_{11} > 0, \quad t_{22} = 1/ t_{11}.
$$
Since in the Gram--Schmidt basis
$$
\bm{\alpha} = (t_{11}, 0), \qquad \bm{\beta} = \Big ( \sum_{l=0}^3 e_l t_{12}^l, 1/ t_{11} \Big ),
$$
the conditions (\ref{Oc1}) characterising the shortest basis give
$$
1 - 1/ t_{11}^4 \le \sum_{l=0}^3 X_l^2, \qquad
D_H\Big ( \sum_{l=0}^3 e_l X_l \Big ) = 0,
$$
where $X_l = t_{12}^l/ t_{11}$.

Also, the Jacobian associated with the change of variables to the Gram--Schmidt basis is $t_{11}^7 t_{22}^3$ (see e.g.~\cite[Ex.~3.2 q.5(i)]{Fo10}).
Thus for $\mathbb F = \mathbb H$ the (normalised) invariant measure (\ref{8}) in the variables
$\{t_{11}, t_{22}, \{ X_l \}_{l=0}^3 \}$ after integrating out over $t_{22}$ reads
\begin{equation}\label{OPO}
{1 \over {\rm vol} \, \Gamma_{4, H}}
\chi_{1-1/t_{11}^4 \le \sum_{l=0}^3 X_l^2}
\chi_{D_H(\sum_{l=0}^3 e_l X_l) = 0} t_{11}^7 dt_{11} \prod_{l=0}^3 dX_l,
\end{equation}
where ${\rm vol} \, \Gamma_{4, H}$ is the normalisation.

The functional form of the PDF for the length $t$ say of the shortest basis vector can be read off from (\ref{OPO}) in
the region $t < 1$.

\begin{proposition}\label{prop20}
Let  ${\rm vol} \, \Gamma_{4, H}$ be as in (\ref{OPO}). For $0 < t < 1$ the PDF for the length of the shortest
basis vector is equal to 
\begin{equation}\label{VPA}
{1 \over {\rm vol} \, \Gamma_{4, H}} {t^7 \over 2}.
\end{equation}
\end{proposition}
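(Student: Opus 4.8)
The plan is to read off the density of the shortest length $t := t_{11}$ directly from the invariant measure \eqref{OPO} by integrating out the four variables $X_0, X_1, X_2, X_3$, and then to specialise to the range $0 < t < 1$, where the geometry is simplest.

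First I would observe that for $0 < t < 1$ one has $1 - 1/t^4 < 0 \le \sum_{l=0}^3 X_l^2$, so that the indicator $\chi_{1 - 1/t_{11}^4 \le \sum_{l=0}^3 X_l^2}$ occurring in \eqref{OPO} is identically equal to $1$ throughout this range. Hence, at fixed $t_{11} = t$, the $X$-integral of the measure \eqref{OPO} collapses to
\[
\frac{t^7}{{\rm vol}\, \Gamma_{4,H}} \int_{\R^4} \chi_{D_H(\sum_{l=0}^3 e_l X_l) = 0}\, dX_0\, dX_1\, dX_2\, dX_3 ,
\]
and by the definition of the lattice quantiser $D_H$ in \eqref{5.0} the remaining integral is exactly the Euclidean volume of the Voronoi cell of the origin in the Hurwitz lattice $H$, regarded as a lattice in $\R^4$. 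The boundary of this cell has zero Lebesgue measure, so the tie-breaking convention implicit in the $\operatorname{argmin}$ plays no role.

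Second, I would evaluate this volume. The Voronoi cell of the origin is a fundamental domain for translation by $H$, so its volume equals the covolume of $H$. Writing $H = \Z^4 \cup \bigl(\Z^4 + \tfrac12(e_0 + e_1 + e_2 + e_3)\bigr)$ displays $H$ as a superlattice of $\Z^4$ of index $2$, so $\operatorname{covol} H = \tfrac12 \operatorname{covol} \Z^4 = \tfrac12$. Substituting this constant back, the $X$-integral of \eqref{OPO} at fixed $t$ equals $\tfrac12 t^7 / {\rm vol}\, \Gamma_{4,H}$, and removing the factor $dt$ yields the stated density \eqref{VPA}.

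There is no essential obstacle here; the only steps requiring a little care are the identification of the $X$-integral with the covolume of the Hurwitz lattice, and the elementary but easily mis-stated fact that this covolume is $\tfrac12$ rather than $1$ --- which is precisely the feature distinguishing the Hurwitz integers from the Lipschitz integers, for which the analogous Voronoi cell is the unit cube of volume $1$.
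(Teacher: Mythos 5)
Your argument is correct and follows essentially the same route as the paper: drop the radial indicator for $0<t<1$, identify the remaining $X$-integral as the volume of the Voronoi cell of the Hurwitz lattice, and evaluate that volume as $1/2$ (the paper does this via a generator matrix of determinant $1/2$ rather than your index-two superlattice argument, but this is an immaterial difference).
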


\begin{proof} With $t = t_{11}$, for $0 < t < 1$ the first of the two constraints in (\ref{OPO}) ---
and the only one involving $s$, is always valid. Noting that
\begin{equation}\label{VV}
\int \chi_{D_H(\sum_{l=0}^3 e_l X_l) = 0}  \prod_{l=0}^3 dX_l = {\rm vol} \, V,
\end{equation}
where $V$ denotes the Voronoi cell, then noting that $ {\rm vol} \, V$ is equal to the volume of a fundamental
cell for the lattice in $\mathbb R^4$ corresponding to the Hurwitz integers, the task is to calculate this latter
volume. Since the lattice corresponding to the Hurwitz integers can be generated by
$$
\begin{bmatrix}1/2 & 0 & 0 & 0 \\
1/2 & 1 & 0 & 0  \\
1/2 & 0 & 1 & 0 \\
1/2 & 0 & 0 & 1 \end{bmatrix}
$$
we conclude ${\rm vol} \, V = 1/2$, and (\ref{VPA}) follows.
\end{proof}

From the definition of the Hurwitz integers, and the quantizer $D_H$, 
the constraint $D_H(\sum_{l=0}^3 e_l X_l) = 0$ can be characterised by the inequalities
\begin{equation}\label{XsX}
|X_l| < {1 \over 2} \quad (l=0,\dots, 3) \quad {\rm and} \quad
\sum_{l=0}^3 | X_l| < 1.
\end{equation}
We have not succeeded in extending the method of the proof of Propositions \ref{prop11} and \ref{p16} for a direct
calculation of
\begin{equation}\label{29a}
 {\rm vol} \, \Gamma_{4, H} = \int \chi_{2^{1/4} > t_{11} > 0}
 \chi_{1 - 1/t_{11}^4 \le \sum_{l=0}^3 X_l^2}
 \Big ( \prod_{l=0}^3 \chi_{|X_l| \le 1/2} \Big )
 \chi_{\sum_{l=0}^3 | X_l| \le 1} t_{11}^7 dt_{11} \, \prod_{l=0}^3 d X_l,
 \end{equation}
 where in obtaining this integral we have used the fact $ {\rm vol} \, \Gamma_{4, H}$ is the normalisation in (\ref{OPO}), and that $t_{11}$ is positive
 and can be no bigger than $2^{1/4}$. But we can deduce its value, as we now proceed to demonstrate.

 First, we remark that
 the integrand in (\ref{29a}) is even in the $X_l$, and so can be restricted to positive values of these variables provided we multiply by $2^4$. Doing this, the change of variables $X_l = x_l/t_{11}$, $t_{11} = u^{1/4}$ 
shows
 \begin{equation}\label{29b}
 {\rm vol} \, \Gamma_{4, H} = 4 \int \chi_{2  >  u  > 0}
 \chi_{u^{1/2} - u^{-1/2} \le \sum_{l=0}^3 x_l^2}
 \Big ( \prod_{l=0}^3  \chi_{2^{1/4}/2 > x_l > 0} \chi_{x_l \le u^{1/4}/2} \Big )
 \chi_{\sum_{l=0}^3  x_l \le u^{1/4}}  d u \, \prod_{l=0}^3 d x_l.
 \end{equation}
 This is well suited to approximate numerical evaluation by a Monte Carlo rejection method, which with $10^6$ trials gives the estimate
 $0.105$. In fact Siegel's mean value theorem can be used to indirectly compute the exact value.
 
 \begin{proposition}
 The exact value of the normalisation is
 \begin{equation}\label{Af}  
  \Gamma_{4, H} = {7 \zeta(3) \over 80} \approx 0.1051799 \cdots
  \end{equation}
  \end{proposition}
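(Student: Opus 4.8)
The plan is to reclaim the value of $\mathrm{vol}\,\Gamma_{4,H}$ indirectly, exactly as was done for the Gaussian integers in the derivation of \eqref{100}, via Siegel's mean value theorem \cite{Si45} in the general form of Weil \cite{We46}, which applies to the quaternion lattice \eqref{5.0d}. Let $\Omega(R)$ be the expected number of nonzero vectors of \eqref{5.0d} lying in the ball $|w|^2+|z|^2<R^2$, $w,z\in\mathbb{H}$, i.e.\ the ball of radius $R$ in $\mathbb{R}^8$, the lattice being chosen with the measure induced from the invariant measure on $\mathrm{SL}_2(\mathbb{H})$. As in the complex cases, the mean value theorem identifies $\Omega(R)$ with the volume of this ball divided by the covolume of \eqref{5.0d} in $\mathbb{R}^8$. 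Since the Hurwitz integers form a lattice of covolume $1/2$ in $\mathbb{R}^4$ (as used already in the proof of Proposition \ref{prop20}) and the defining basis has unit generalised volume, that covolume is $(1/2)^2=1/4$; as the unit ball in $\mathbb{R}^8$ has volume $\pi^4/24$, this gives $\Omega(R)=4(\pi^4/24)R^8=\pi^4R^8/6$, the analogue of \eqref{OR}. (The scale factor $1/4$, and its effect on the mean value theorem, plays the same role as in the Eisenstein discussion around \eqref{SR}.)

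Next I would compute $\Omega(R)$ directly for small $R$ from Proposition \ref{prop20}. A Hadamard-type estimate, using that the fundamental cell of \eqref{5.0d} has fixed volume $1/4$, shows $\lVert\bm{\alpha}\rVert\,\lVert\bm{\beta}\rVert$ is bounded below by a positive constant, so for $R$ below that constant the only nonzero lattice vectors in the ball of radius $R$ are the Hurwitz integer multiples $m\bm{\alpha}$, $m\in H\setminus\{0\}$, with $|m|^2\lVert\bm{\alpha}\rVert^2<R^2$. Writing $s=\lVert\bm{\alpha}\rVert/R$ and using the PDF \eqref{VPA}, valid since then $\lVert\bm{\alpha}\rVert<R<1$, the substitution $\lVert\bm{\alpha}\rVert=sR$ gives
\begin{equation*}
\Omega(R)=\frac{R^8}{2\,\mathrm{vol}\,\Gamma_{4,H}}\int_0^1 s^7\,N_H(s)\,\mathrm{d}s,\qquad N_H(s):=\#\{m\in H\setminus\{0\}:|m|^2<1/s^2\}.
\end{equation*}
Splitting the integral at the points $s=p^{-1/2}$, performing $\int s^7\,\mathrm{d}s=s^8/8$ and Abel-summing the telescoping series as in \eqref{100} reduces this to
\begin{equation*}
\Omega(R)=\frac{R^8}{16\,\mathrm{vol}\,\Gamma_{4,H}}\sum_{p=1}^\infty\frac{M_H(p)}{p^4}=\frac{R^8}{16\,\mathrm{vol}\,\Gamma_{4,H}}\sum_{m\in H\setminus\{0\}}\frac{1}{|m|^8},
\end{equation*}
with $M_H(p)$ the number of Hurwitz integers of square-norm exactly $p$.

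Finally I would evaluate the zeta value of the Hurwitz order $\sum_{m\in H\setminus\{0\}}|m|^{-8}$ using the classical count $M_H(p)=24\sum_{d\mid p,\ d\ \mathrm{odd}}d$ (a consequence of the arithmetic of the Hurwitz maximal order, readily checked for small $p$; the factor $24$ is the number of units). Exchanging the order of summation,
\begin{equation*}
\sum_{p=1}^\infty\frac{M_H(p)}{p^4}=24\,\zeta(4)\sum_{d\ \mathrm{odd}}\frac{1}{d^3}=24\,\zeta(4)\zeta(3)\bigl(1-2^{-3}\bigr)=24\cdot\frac{\pi^4}{90}\cdot\frac78\,\zeta(3)=\frac{7\pi^4\zeta(3)}{30}.
\end{equation*}
Equating the two expressions for $\Omega(R)$ and solving, $\mathrm{vol}\,\Gamma_{4,H}=\frac{6}{16\pi^4}\cdot\frac{7\pi^4\zeta(3)}{30}=\frac{7\zeta(3)}{80}$, which is \eqref{Af}. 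The main obstacle is not any single computation but fixing the normalisation consistently: one must verify that the covolume factor $1/4$ enters as claimed, that for small $R$ no vector involving $\bm{\beta}$ reaches the ball, and that $M_H(p)=24\sum_{d\mid p,\,d\ \mathrm{odd}}d$ is the correct counting identity --- everything else is elementary. As a consistency check, $7\zeta(3)/80\approx0.10518$, matching the Monte Carlo estimate $0.105$ for \eqref{29b}.
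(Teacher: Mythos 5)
Your proposal is correct and follows essentially the same route as the paper: equate the Siegel--Weil mean value prediction $\Omega(R)=2^2\,\pi^4R^8/24$ (the $2^2$ accounting for the Hurwitz unit cell of volume $1/2$) with the direct small-$R$ computation $\Omega(R)=R^8\bigl(16\,\mathrm{vol}\,\Gamma_{4,H}\bigr)^{-1}\sum_{\mathbf q\in H\setminus\{\mathbf 0\}}|\mathbf q|^{-8}$ obtained from the PDF \eqref{VPA}. The only (harmless) difference is that you evaluate the lattice sum yourself from the classical representation count $M_H(p)=24\sum_{d\mid p,\ d\ \mathrm{odd}}d$, whereas the paper cites Zucker for the value $21\,\zeta(3)\zeta(4)=7\pi^4\zeta(3)/30$; both give the same answer $\mathrm{vol}\,\Gamma_{4,H}=7\zeta(3)/80$.
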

  
  \begin{proof}
 Let $\Omega(R)$ denote the expected number of vectors in the punctured quaternion disk of radius $R$.
 The fact that as a lattice in $\mathbb R^4$, the Hurwitz integers have unit cell of area ${1 \over 2}$ (recall
 the proof of Proposition \ref{prop20}) tells us that the appropriate version of Siegel's mean value theorem as generalised by Weil \cite{We46} is the
 statement that
  \begin{equation}\label{1Ad}
  \Omega(R) = 2^2 {\pi^4 R^8 \over 24},
  \end{equation}
  where $\pi^4 R^8/ 24$ is the volume of the ball of radius $R$ in $\mathbb R^8$. The factor of $2^2$ is due to the area
  of the unit cell corresponding to the Hurwitz integers being $1/2$; recall the discussion below (\ref{SR}).
  
  On the other hand, starting with (\ref{VPA}), the considerations which led to (\ref{100}) give
  $$
  \Omega(R) = {R^8 \over 16 {\rm vol} \, \Gamma_{4,H}} \sum_{\mathbf q \in H  \backslash \{ \mathbf 0 \}}
  {1 \over | \mathbf q |^8 }.
  $$
  With $\zeta(s)$ denoting the Riemann zeta function, results contained in \cite{Zu74} tell us that
  $$
  \sum_{\mathbf q \in H \backslash \{ \mathbf 0 \}}
  {1 \over | \mathbf q |^8 } = 21 \zeta(3) \zeta(4) = {21 \pi^4 \over 90} \zeta(3)
  $$
  and thus
  \begin{equation}\label{Ae}
  \Omega (R) = {7 \pi^4 R^8 \zeta(3) \over 2^5 \cdot 3 \cdot 5   \cdot {\rm vol} \, \Gamma_{4, H}}
  \end{equation}
  Equating with (\ref{1Ad}) gives (\ref{Af}).
  \end{proof}

 \begin{remark}
  For the PDF of the second shortest basis vector in the real and complex cases, it has been demonstrated in
  Remark \ref{remark6a} that the asymptotic form for large length $s$, after the change of variables
  $s \mapsto 1/s$, is precisely the same as the small-$s$ form of the PDF of the shortest basis vector. Here we
  will demonstrate this same property for the quaternion case.
  
  In (\ref{OPO}), with the quantiser rewritten according to (\ref{XsX}),
 and the change of variables $X_l \mapsto t_{11} X_l$, we set 
   and $\mathbf X=(X_0,\dots,X_3)$, and further change
  variables from $t_{11}$ to $s = (|\mathbf X|^2 + 1/ t_{11}^2)^{1/2}$ --- the length of the second shortest basis vector ---
  to deduce that the PDF of the latter is
  \begin{equation}\label{5R}
  {1 \over {\rm vol} \, \Gamma_{4,H}}
  \int \chi_{|\mathbf X|^2 \le s^2 - 1/s^2} \Big ( \prod_{l = 0}^3 \chi_{|\mathbf X|^2 + 1/ 4 X_l^2 \ge s^2}
  \Big ) \chi_{|\mathbf X|^2 + 1/  ( \sum_{l = 0}^3 |X_l|)^2 \ge s^2} 
  {s \over (s^2 - |\mathbf X|^2)^3} \, \prod_{l = 0}^3 d X_l.
  \end{equation}

   \begin{figure}[t]
\centering
\includegraphics[scale=0.45]{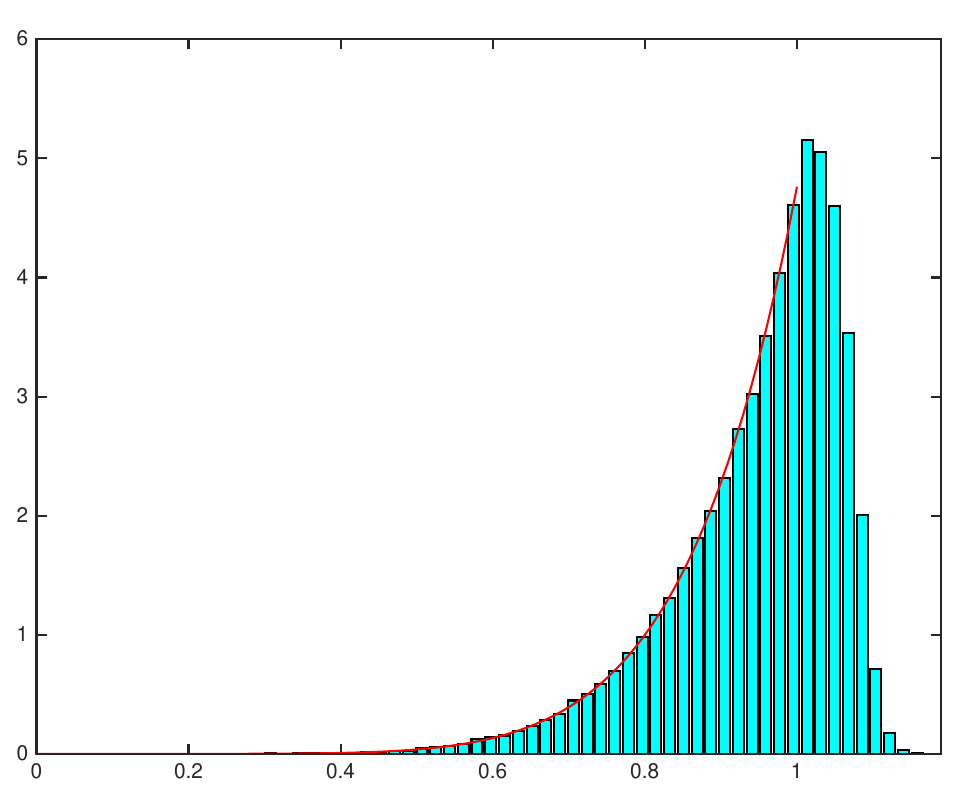}
 \caption{\label{fig1aA}  A total of $10^6$ matrices were sampled from SL${}_2(\mathbb H)$ with invariant measure and
 and bound $R=40$ on the 2-norm. For each, the quaternion Lagrange--Gauss lattice reduction algorithm with respect to the Hurwitz integers
 has been applied to compute the shortest basis vector $\bm \alpha$ . A histogram has been formed for the PDF of 
 $|| \bm \alpha ||$. In the range $0<s<1$ the theoretical prediction
 (\ref{VPA}) with $ \Gamma_{4, H}$ specified by (\ref{Af}) has been superimposed.
}
\end{figure}

  Denote
 {\small  $$
  \Gamma_1 = \{ \mathbf X: \, |\mathbf X|^2 \le s^2 - 1/s^2\}, \:\:
  \Gamma_2 = \cup_{l=0}^3 \{ \mathbf X: \, |\mathbf X|^2 + 1/4 X_l^2 \ge s^2\}, \:\:
  \Gamma_3 = \{ \mathbf X: \, |\mathbf X|^2 + 1/ ( \sum_{l = 0}^3 |X_l|)^2 \ge s^2\},
  $$}
  and for $\mu = 1,2$ let
  {\small $$
  D_\mu = \cup_{l = 0}^3 \{\mathbf X: \, |X_l|^2 \le (s^2 - \sqrt{s^4 - 2^{\mu - 1}})/2^{2\mu-1}\}, \:\:
  R_\mu = \{ \mathbf X: \, ( \sum_{l = 0}^3 |X_l|)^2 \le 4 (s^2 - \sqrt{s^4 - 2^{\mu - 1}})/ 2^{2 \mu -1}\}.
  $$}
  Here $D_1$ ($D_2$) results from replacing $|\mathbf X|^2$  by $|X_l^2$ $(2 X_l^2)$ in $\Gamma_2$, then
  solving for $|X_l|^2$. Similarly, $R_1$ ($R_2$) results from replacing $|\mathbf X|^2$ by
  ${1 \over 2} ( \sum_{l=0}^3 |X_l| )^2$ ($ ( \sum_{l=0}^3 |X_l| )^2$) respectively. By construction
  $$
   D_2  \subseteq \Gamma_2 \subseteq  D_1, \qquad
  R_2  \subseteq \Gamma_3 \subseteq  R_1.
  $$
  Also, as $s \to \infty$,
  $
  \Gamma_2 \subseteq \Gamma_1$ and 
  $$
  D_1,D_2 \to   \cup_{l = 0}^3 \{ X_l: \, 1/(2s) + {\rm O}(1/s^5) \ge   |X_l|\}, \: \:
  R_1, R_2 \to \{\mathbf X: \,1/s + {\rm O}(1/s^5) \ge \sum_{l = 0}^3 |X_l|\}.
  $$
  It follows from the above working that for large $s$ the PDF (\ref{5R}) has the leading asymptotic form
  $$
  {1 \over {\rm vol} \, \Gamma_{4,H}} {1 \over s^5}
  \int  \prod_{l = 0}^3  \,
  \chi_{|X_l| \le 1/2s}  \, \chi_{\sum_{l = 0}^3 |X_l| \le 1/s} \,
  \prod_{l = 0}^3 d X_l.
  $$
  Scaling $s$ from the integral, then recognising what remains as (\ref{VV}) simplifies this to
  $$
  {1 \over 2 {\rm vol} \, \Gamma_{4,H}} {1 \over s^9}.
  $$
 Associating this with a measure and thus multiplying by $ds$, changing variables $s \mapsto 1/s$ we obtain (\ref{VPA}), which was our claim. As discussed in
  Remark \ref{remark6a}, this can be anticipated from the fact that the area of a unit cell is unity.
  \end{remark}
  
  In the quaternion case the analogue of the variables (\ref{51a})  and (\ref{zXY}) are the four variables
  $$
  \xi_l = {X_l \over \sqrt{|\mathbf X|^2 + 1/ t_{11}^4} }\qquad l = 0,\dots, 3.
  $$
  Although we don't give the details, we remark that the joint distribution of these variables can
  be computed to obtain a PDF analogous to those in Propositions \ref{prop13} and \ref{prop13a}.

  Using an extension of the numerical method detailed in \cite{Fo16}  the quaternion version of the Lagrange--Gauss algorithm detailed in
  \S \ref{s5} has been implemented, allowing for the plotting of a histogram approximating the PDF for
  the shortest basis vector. As shown in Figure \ref{fig1aA} this exhibits excellent agreement with the theoretical prediction
  (\ref{VPA})  augmented by (\ref{Af}). We remark that the numerical methods of \cite{Fo16}
  have also been appropriately generalised to provide realisations by way of histograms of the
  PDFs (\ref{4.4}),  (\ref{28.1}), (\ref{51d}) and (\ref{75}). Although we refrain from displaying the results, we remark that again the agreement is excellent.

\section*{Acknowledgements}
This research project is part of the program of study supported by the 
ARC Centre of Excellence for Mathematical \& Statistical Frontiers. We thank F.~Calegari
for the footnote made in relation to (\ref{28a}). We thank too the referee for
a very thorough reading.


\providecommand{\bysame}{\leavevmode\hbox to3em{\hrulefill}\thinspace}
\providecommand{\MR}{\relax\ifhmode\unskip\space\fi MR }
\providecommand{\MRhref}[2]{%
  \href{http://www.ams.org/mathscinet-getitem?mr=#1}{#2}
}
\providecommand{\href}[2]{#2}

\end{document}